\documentclass[11pt]{article}
\usepackage[utf8]{inputenc}

\usepackage{verbatim}
\usepackage{caption}
\usepackage{subcaption}
\usepackage{amssymb}
\usepackage{amsmath}
\usepackage{tabularx}
\usepackage{color}
\usepackage{stmaryrd}
\usepackage{multirow}
\usepackage{mathtools}
\usepackage{hyperref}
\usepackage{epsfig,latexsym}


\usepackage{amsmath,amssymb,amsthm,graphicx,verbatim,bm,fancyhdr,bbm,enumerate,color, tabularx}
\usepackage{hyperref}



\DeclareMathAlphabet{\mathpzc}{OT1}{pzc}{m}{it}  

\theoremstyle{definition}
\theoremstyle{plain}
\newtheorem{theorem}{Theorem}

\newtheorem{lemma}{Lemma}
\newtheorem{lemmalemma}{Lemma}[lemma]

\theoremstyle{remark}

\newtheorem*{remark*}{Remark}

\newtheorem*{terminology*}{Terminology}
\newtheorem*{notation*}{Notation}

\newenvironment{pfofthm}[1]
{\par\vskip2\parsep\noindent{\em Proof of\ #1. }}{{\hfill
$\Box$}
\par\vskip2\parsep}

\voffset=-1in
\hoffset=-0.55in
\textwidth=6.1in
\textheight=8.9in





\renewcommand{\P}{\mathbb{P}}

\newcommand{\E}{\mathbb{E}}

\newcommand{\R}{\mathbb{R}}

\newcommand{\Z}{\mathbb{Z}}






\usepackage{lineno}

\let\oldalign\align
\let\oldendalign\endalign

\renewenvironment{align}
  {\linenomathNonumbers\oldalign}
  {\oldendalign\endlinenomath}



\title{Spread of premalignant mutant clones and \\ cancer initiation in multilayered tissue}
\author{Jasmine Foo$^{1}$ \and\hspace*{-6pt} Einar Bjarki Gunnarsson$^{2}$ \and\hspace*{-6pt}  Kevin Leder$^{2}$ \and\hspace*{-6pt} Kathleen Storey$^{3}$}
\date{%
    \footnotesize $^1$School of Mathematics, University of Minnesota, Twin Cities \\[3pt]
    $^2$Department of Industrial and Systems Engineering, University of Minnesota, Twin Cities \\[0pt]
    $^3$Department of Mathematics, Lafayette College
}
\begin{document}

\begin{center}
{\bf\large Accepted author manuscript (Annals of Applied Probability)}
\end{center}

\vspace*{-12pt}

\begingroup
\let\newpage\relax
\maketitle
\endgroup
      
\begin{abstract}
Over 80\% of human cancers originate from the epithelium, which covers the outer and inner surfaces of organs and blood vessels.
In stratified epithelium, the bottom layers are occupied by stem and stem-like cells that continually divide and replenish the upper layers. 
In this work, we study the spread of premalignant mutant clones and cancer initiation in stratified epithelium, using the biased voter model on stacked two-dimensional lattices. 
Our main result is an estimate of the propagation speed of a premalignant mutant clone, which is asymptotically precise in the cancer-relevant weak-selection limit.
We use our main result to study cancer initiation under a two-step mutational model of cancer, which includes computing the distributions of the time of cancer initiation and the size of the premalignant clone giving rise to cancer.
Our work quantifies the effect of epithelial tissue thickness on the process of carcinogenesis, thereby contributing to an emerging understanding of the spatial evolutionary dynamics of cancer. \vspace*{3pt}
\end{abstract}
 
 \noindent {\em Keywords:} Spatial cancer models, biased voter model,  branching coalescing random walks, evolutionary dynamics, field cancerization. \\
 
 \noindent {\em MSC classification:} 60G50, 60J27, 60K35, 92B05, 92C50, 92D25.
 
 
\section{Introduction}\label{sec_intro}

According to the widely held multi-stage model of carcinogenesis, cancer arises due to the accumulation of genetic mutations that culminate in malignant cells
able to proliferate uncontrollably
\cite{armitage1954age,armitage1957two,knudson1971mutation,knudson2001}.
Each mutation in this process
can afford a small selective advantage,
which can allow premalignant cells to expand into clones or ``fields'' that are further along the evolutionary pathway to cancer than normal cells and thus predisposed to becoming cancerous \cite{Curtius2018}.
The notion that cancer arises on the background of premalignant field expansion
is referred to as ``field cancerization'' or ``the cancer field effect''.
It has important clinical implications, since tumors surrounded by premalignant patches are at increased risk of recurrence following cancer treatment \cite{braak2003, chai2009}.
Premalignant fields often appear histologically normal, making them difficult to distinguish from healthy tissue.
This suggests that a mathematical understanding of the spatial evolutionary dynamics of cancer initiation can yield valuable insights into treatment decision-making, including optimal surgical excision margins and post-treatment surveillance protocols.

\begin{figure}
    \centering
    \includegraphics[scale=0.95]{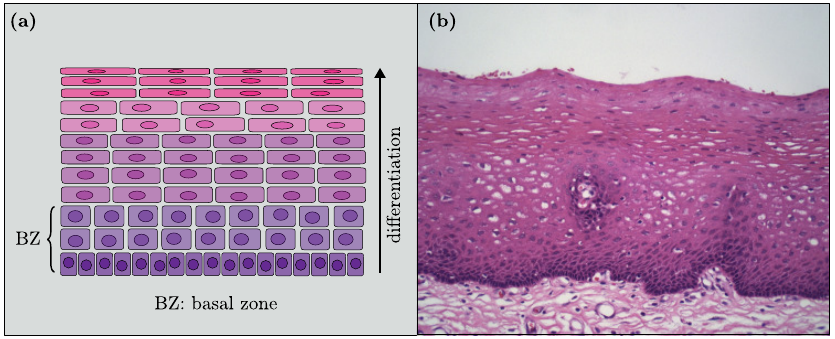}
    \caption{{\bf (a)} In stratified squamous epithelium of the esophagus, a basal layer of stem cells and 2-3 layers of proliferative basaloid cells form the basal zone, which continually replenishes the upper layers with differentiated cells.
    {\bf (b)} Histology of normal stratified squamous epithelium of the esophagus. 
    Figure 4.6 on page 78 of \cite{CHANDRASOMA201873}.
    }
    \label{fig:epithelium}
\end{figure}

Over 80\% of human cancers originate from the epithelium, which lines the outer and inner surfaces of organs and blood vessels \cite{NIH}. Simple epithelium consists of a single layer of proliferating cells, whereas in stratified epithelium, stem and stem-like cells proliferate along the bottom layers and continually replenish the upper layers with differentiated cells that lose their ability to proliferate.
For example, in {\em stratified squamous epithelium} of the esophagus, a basal layer of stem cells and 2-3 layers of proliferative basaloid cells form the {\em basal zone}, which accounts for less than 30\% of total epithelial thickness. As cells move upward, they become terminally differentiated keratinocytes with small nuclei 
that flatten out and eventually get shed at the top layer (Fig.~\ref{fig:epithelium}) \cite{CHANDRASOMA201873,Geboes94}.\footnote{Figure 1b is reprinted from GERD, Parakrama T.~Chandrasoma, Chapter 4 -- Histologic Definition and Diagnosis of Epithelia in the Esophagus and Proximal Stomach, pp.~73-107, Copyright 2018, with permission from Elsevier.}
Since the accumulation and spread of mutations is driven by the proliferating basal and basaloid cells, the basal zone is the appropriate setting to study the process of carcinogenesis in stratified epithelium.

In this work, we study the spread of premalignant mutant fields in epithelial basal zones, and we examine the effect of basal zone geometry on the process of cancer initiation.
Our main result determines the propagation speed of a premalignant mutant clone as a function of a small mutant selective advantage and the number of layers in the basal zone, which enables comparison of the evolutionary dynamics between different types of epithelial cancers.
We employ a spatially explicit model of cell division and replacement, where cells live on a set of stacked two-dimensional integer lattices, representing a multilayered basal zone.
The model dynamics are as follows: Cells of two types, normal and mutant, are arranged on the stacked lattices, with mutant cells dividing more frequently than normal cells. Upon cell division, one daughter cell stays put, and the other replaces a neighboring cell chosen uniformly at random.
This model was originally proposed by Williams \& Bjerknes \cite{Williams72} in the context of a single two-dimensional epithelial basal layer, and it arose independently within the field of interacting particle systems as the {\em biased voter model}.
Bramson and Griffeath \cite{BraGri81, BraGri80} showed in 1980-1981 that under the biased voter model, an advantageous mutant clone eventually assumes a convex, symmetric shape whose diameter grows linearly in time.
The Bramson-Griffeath shape theorem extends naturally to our stacked-lattice setting, and it plays a central role in the derivation of our main result.

Once we have determined the propagation speed of premalignant mutant clones in epithelial basal zones, we consider the implications of our result for the dynamics of cancer initiation.
The process of carcinogenesis under the multi-step model of cancer
has already been well-studied in the non-spatial, homogeneously mixed setting, see e.g.~the books by Nowak \cite{nowak2006} and Wodarz and Komarova \cite{wodarz2014}. In the spatial setting, Komarova \cite{kom2006} has analyzed the time of cancer initiation on a one-dimensional lattice under a two-step model of cancer, assuming a neutral or deleterious first-step mutation. 
Durrett and Moseley \cite{DurMose2015} extended Komarova's work to two and three dimensions assuming a neutral first step.
Durrett, Foo and Leder \cite{DurFooLed} considered the case of a small selective advantage (weak selection), and they derived the distribution of the time of cancer initiation under a two-step model of cancer in certain parameter regimes. Foo, Leder and Schweinsberg obtained more complete results in \cite{foo2020mutation}, where they also studied cancer initiation under a general $k$-step model.
In \cite{FLR2014}, Foo, Leder and Ryser studied field cancerization under a two-step model, which included computing size-distributions of premalignant fields at the time of cancer initiation. 
Upon establishing our main result of premalignant mutant propagation, we will adapt analysis from \cite{DurFooLed} and \cite{FLR2014} to gain insights into how cancer initiation and field cancerization is affected by the specific geometric setting of a multilayered basal zone.

The rest of the paper is organized as follows. In Section \ref{sec_mainstatement}, we propose a model of the spread of premalignant mutant fields along epithelial basal zones and state our main result of their long-run propagation speed.
In Section \ref{sec_proofoutline}, we present an outline of the proof of the main result, in which we exploit a duality between the biased voter model and a system of branching coalescing random walks. 
We use coupling to set up an approximation scheme, based on a pruning procedure of Durrett and Z{\"a}hle \cite{durrett2007}, which culminates in simple, coalescence-free, branching random walks that are more readily analyzed. We state ten technical lemmas, the most important of which are Lemmas \ref{lemma:upperbound} and \ref{lemma:lowerbound} that provide lower and upper bounds for the propagation speed of a premalignant mutant clone. In Section \ref{sec_mainproof}, we show how our main result follows from these two lemmas, and in Section \ref{sec:applications}, we discuss the implications of our main result for cancer initiation and field cancerization in multilayered epithelial basal zones. In Section \ref{sec:proofsoflemmas}, we present proofs of the ten lemmas, and we discuss how the Bramson-Griffeath shape theorem in two dimensions can be extended to our stacked-lattice setting.

\begin{notation*}
In our exposition, we make use of the following asymptotic notation, where $a$ is taken to be 0 or $\infty$ depending on the context:
\begin{itemize}
    \item[] $f(x) = o(g(x))$ and $g(x) = \omega(f(x))$ as $x \to a$ if $f(x)/g(x) \to 0$ as $x \to a$.
    \item[] $f(x) = O(g(x))$ and $g(x) = \Omega(f(x))$ as $x \to a$ if $\limsup_{x \to a} |f(x)/g(x)| < \infty$.
    \item[] $f(x) = \Theta(g(x))$ as $x \to a$ if $f(x) = O(g(x))$ and $f(x) = \Omega(g(x))$ as $x \to a$.
    \item[] $f(x) \sim g(x)$ as $x \to a$ if $f(x)/g(x) \to 1$ as $x \to a$.
\end{itemize}
\end{notation*}

\section{Model of spread of premalignant mutant fields and statement of main result}\label{sec_mainstatement}

Let $\Z_w := \Z \!\mod w$ denote the additive group of integers modulo $w \geq 1$.
We represent an epithelial basal zone as the set $\Z^2\times \Z_w$ of $w$ layers of two-dimensional integer lattices, with a periodic boundary condition along the third dimension.
For each site $x \in \Z^2  \times \Z_w$, we define its {\em neighborhood} as ${\cal N}(x) := \{x \pm e_i: i=1,2\}$ for $w=1$ and 
${\cal N}(x) := \{x \pm e_i: i=1,2,3\}$ for $w > 1$, where $e_i$ is the $i$-th unit vector, and addition along the third coordinate is carried out modulo $w$.
Note that for $w=1$, each site has four neighbors, for $w=2$, each site has five neighbors, and for $w>2$, each site has six neighbors.

To model the spread of premalignant mutant fields, we define the biased voter model on $\Z^2 \times \Z_w$ as follows: Each site in $\Z^2 \times \Z_w$  is occupied by either a type-0 cell, representing a normal cell, or a type-1 cell, representing a premalignant mutant cell. Type-1 cells have a fitness advantage $\beta > 0$ over type-0 cells, meaning that type-1 cells divide at exponential rate $1+\beta$, while type-0 cells divide at exponential rate 1.
Upon cell division at a site $x \in \Z^2\times \Z_w$, one daughter cell stays put, while the other daughter cell replaces a neighboring cell at a site $y \in {\cal N}(x)$ chosen uniformly at random (Fig.~\ref{fig:modeldynamics}a). 

We assume throughout that the fitness advantage $\beta$ is small. In \cite{bozic2010accumulation}, for example, 
Bozic et al.~show that data on multiple cancer types (glioblastoma, pancreatic cancer and colon cancer) is consistent with an average selective advantage of $\beta = 0.004$ per mutational step. They further argue that this estimate should be more broadly relevant across cancer types,
given the considerable overlap of pathways through which the selective mutations act. 

A few comments are in order on the biological significance of our modeling assumptions.
First, we allow cells on the top layer of $\Z^2\times \Z_w$ to replace cells on the bottom layer, and vice versa (Fig.~\ref{fig:modeldynamics}a). This assumption simplifies the analysis, as it means that the top and bottom layers have the same neighborhood structure as the intermediate layers,
but it is not biologically realistic. In Appendix \ref{app:boundarycondition}, we use simulation to show that this assumption does not significantly affect type-1 propagation when $\beta$ is small.
Secondly, the model dynamics are driven by cell division, with cell division preceding cell death, and the model assumes that type-0 and type-1 cells are equally likely to be replaced by a dividing cell.
When $\beta$ is small, we expect that the exact dynamics of cell division and cell death are not important for long-run type-1 propagation, and we plan to discuss this in future work.
Finally, we assume that cells are arranged on the lattice $\Z^2 \times \Z_w$ throughout, whereas in real tissue, the spatial structure may be different, and the structure may change
as the premalignancy progresses. 
As with any model of biological or physical phenomena, our model is simplified and not intended to capture the full complexity of the system.
Our focus here is on two important parameters of the process:
The fitness advantage $\beta> 0$ of mutant cells and the tissue thickness $w \geq 1$.

\begin{figure}
    \centering
    \includegraphics[scale=1]{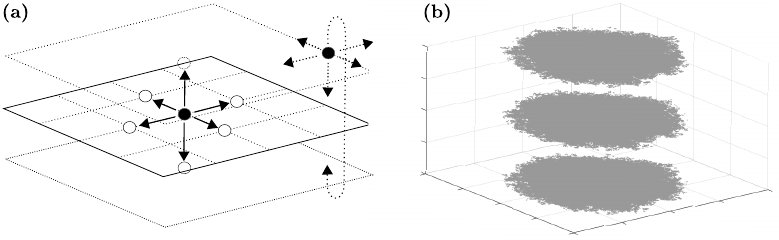}
    \caption{{\bf (a)} Model dynamics for the $w=3$ case (basal zone consists of three layers).
    When the black cell divides, one daughter cell stays put, and the other replaces one of six neighboring cells chosen uniformly at random.
    A cell on the top (resp.~bottom) layer can replace a cell on the same layer, the layer immediately below (resp.~above) or on the bottom (resp.~top) layer. {\bf (b)} Simulation of the model for $w=3$ and $\beta=0.1$, stopped at 50,000 cells. By the Bramson-Griffeath shape theorem \eqref{eq:shapethm}, the premalignant population eventually takes on a convex, symmetric shape, and representative simulations suggest that this limiting shape is a union of two-dimensional Euclidean disks.
    }
    \label{fig:modeldynamics}
\end{figure}

Let $\xi_t^A$ denote the set of sites in $\Z^2\times \Z_w$ occupied by type-1 cells at time $t$, given the initial condition $\xi_0^A = A$ with $A \subseteq \Z^2\times \Z_w$.
Our baseline assumption is that the system starts out with a single type-1 cell at the origin, i.e.~$A = \{0\}$.
Define
\begin{align*} 
\tau_\varnothing^0= \tau_\varnothing^0(\beta) :=\inf\{t\geq0:\xi_t^0=\varnothing\}    
\end{align*}
as the time of extinction of the process 
starting from the origin, with $\inf \varnothing = \infty$. 
The discrete-time jump process embedded in $(|\xi_t^0|)_{t \geq 0}$, with $|\cdot|$ denoting cardinality, is a simple, biased random walk on the nonnegative integers with absorption at 0.
The walk moves up with probability $(1+\beta)/(2+\beta)$ and down with probability $1/(2+\beta)$.
It follows by the gambler's ruin formula that a type-1 mutant 
expands into a successful type-1 clone with probability
\begin{align} \label{eq:gamblersruin}
    \P(\tau_\varnothing^0 = \infty) = {\beta}/({1+\beta}).
\end{align}
If the mutant is successful, the Bramson-Griffeath shape theorem on $\Z^2$ \cite{BraGri81, BraGri80} can be extended to show that the mutant clone on $\Z^2 \times \Z_w$ eventually assumes a convex, symmetric shape whose diameter grows linearly in time.
To carry out the extension, we need to introduce a notion of spatial scaling and distance in our stacked-lattice setting.
To that end, we go to the larger space $\R^2 \times \Z_w$.
For $t \in \mathbb{R}$ and $A \subseteq \R^2 \times \Z_w$, we define the scalar multiplication operation $tA$ as multiplication along the first two coordinates:
\begin{align} \label{eq:scalarmultpldef}
    tA := \{(tx_1,tx_2,x_3): (x_1,x_2,x_3) \in A\}.
\end{align}
The distance of a point $x = (x_1,x_2,x_3) \in \R^2 \times \Z_w$ from the origin is defined in terms of its two-dimensional projection as
\begin{align} \label{eq:seminormdef}
    ||x|| = ||(x_1,x_2,x_3)|| := \sqrt{x_1^2+x_2^2}.
\end{align}
Note that $\R^2 \times \Z_w$ is not a vector space since $(s+t)x=sx+tx$ does not hold in general for $s,t \in \R$ and $x \in \R^2 \times \Z_w$.
However, it does hold up to addition by a vector parallel to ${e}_3 = (0,0,1)$, which is sufficient for establishing a shape theorem (Section \ref{app:BGtheorem}).

Using the above definitions, we can modify Bramson and Griffeath's arguments to show that there exists a set $D = D(\beta)$ on $\R^2 \times \Z_w$
so that for any $\varepsilon>0$,
\begin{align} \label{eq:shapethm}
\P\big(\exists t_*<\infty: (1-\varepsilon)tD \cap (\mathbb{Z}^2 \times \Z_w) \subseteq \xi_t^0 \subseteq (1+\varepsilon)tD,\; t\geq t_* \ \big| \ \tau_\varnothing^0=\infty\big)=1.
\end{align}
The set $D$ can be written as $D = \bigcup_{i \in \Z_w} (X \times \{i\})$, where 
$X \subseteq \mathbb{R}^2$ 
is convex,
and $X$ has the same symmetries as those of $\Z^2$ that leave the origin fixed.
For example, $(x_1,x_2) \in X$ implies $(-x_1,x_2),(x_1,-x_2) \in X$ (reflection across an axis) and $(-x_2,x_1) \in X$ (rotation by 90 degrees).
The shape theorem does not offer a more explicit description of $D$, but representative simulations suggest that it is a union of two-dimensional Euclidean disks (Fig.~\ref{fig:modeldynamics}b).
In Section \ref{app:BGtheorem}, we discuss how to adapt Bramson and Griffeath's arguments from $\Z^2$ to $\Z^2 \times \Z_w$, and we provide an implicit definition of the set $D$ in terms of the process $(\xi_t^0)_{t \geq 0}$.

To determine the rate of expansion of the mutant clone $\xi_t^0$, 
we denote the radius of $D = D(\beta)$ by $c_w(\beta)$, and define it in terms of the projection of $D$ onto the $x$-axis as
\begin{align}
\{ x \in \R: (x,y,z) \in D \} =: [-c_w(\beta), c_w(\beta)].
\label{correctasymptoticspeed}
\end{align}
We furthermore define
\begin{align} \label{eq:birthfirstwodimensions}
p_w := \begin{cases} 1, & w=1, \\
4/5, & w=2, \\ 2/3, & w>2, \end{cases}    
\end{align}
as the probability that a cell giving birth on $\mathbb{Z}^2 \times \Z_w$ replaces a cell occupying the same layer.
Note that each cell has four neighboring cells occupying the same layer, independently of $w$.
The difference between the $w=1$, $w=2$ and $w>2$ cases lies in the fact that there are zero, one and two neighboring cells occupying other layers, respectively.

Our main result determines $c_w(\beta)$ as a function of a small selective advantage $\beta> 0$ and tissue thickness $w \geq 1$.
Intuitively, it is clear that $c_w(\beta) \to 0$ as $\beta \to 0$.
In order to determine $c_w(\beta)$ for small $\beta$, we therefore compute its rate of decrease as $\beta \to 0$.

\begin{theorem}\label{speed_thm}
For $\beta > 0$ and $w \geq 1$, let $c_w(\beta)$ be the radius of the asymptotic shape $D = D(\beta)$ of the biased voter model $(\xi_t^0)_{t \geq 0}$ on $\Z^2 \times \Z_w$,
conditioned on the event that it does not die out, as defined by \eqref{eq:shapethm} and \eqref{correctasymptoticspeed}. 
Then, as $\beta \to 0$,
\begin{align*} 
c_w(\beta) \sim p_w \sqrt{\pi w \beta}\big/\sqrt{\log(1/\beta)} = a_w \big/\sqrt{h(\beta)}, 
\end{align*}
where $p_w$ is defined as in \eqref{eq:birthfirstwodimensions}, $a_w := p_w \sqrt{\pi w}$ and $h(\beta) := (1/\beta) \cdot \log(1/\beta)$.
\end{theorem}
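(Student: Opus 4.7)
The plan is to derive Theorem \ref{speed_thm} by sandwiching $c_w(\beta)$ between two quantities both asymptotic to $p_w \sqrt{\pi w \beta / \log(1/\beta)}$, corresponding to Lemmas \ref{lemma:upperbound} and \ref{lemma:lowerbound}. The natural vehicle is the duality between the biased voter model on $\mathbb{Z}^2 \times \mathbb{Z}_w$ and a system of branching coalescing random walks (BCRW), in which each particle jumps to a neighbor at rate $1$ (yielding coalescence if the target is occupied) and branches at rate $\beta$. Under this duality, the front of $\xi_t^0$ is governed by the front of the dual BCRW, and by the extension of the Bramson--Griffeath shape theorem stated in \eqref{eq:shapethm} it suffices to identify the rate at which the rightmost dual particle leaves the origin. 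Because coalescence makes the BCRW hard to analyze directly, I would follow the pruning strategy of Durrett and Z\"ahle \cite{durrett2007}: couple the BCRW with coalescence-free branching random walks by deleting branching events whose two offspring would meet before a carefully chosen cutoff time.

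For the upper bound (Lemma \ref{lemma:upperbound}), the BCRW is stochastically dominated by an unconstrained branching random walk (BRW). A naive BRW grows to $e^{\beta t}$ particles and would yield a lateral spread of order $\sqrt{\beta}$, which is too large by a factor $\sqrt{\log(1/\beta)}$. To get the correct constant one must exploit the fact that in dimension two the dual walks recoalesce with probability $1 - O(1/\log(1/\beta))$ over a time window of length $1/\beta$: effectively this divides the birth rate by $\log(1/\beta)$. I would implement this by pruning branchings on the cutoff scale $1/\beta$, bounding the number of surviving lineages by a Galton--Watson process with mean offspring $1 + O(\beta/\log(1/\beta))$, and then estimating the rightmost lateral position of the pruned BRW via Chernoff bounds on a single lineage whose step distribution is lateral with probability $p_w$.

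For the lower bound (Lemma \ref{lemma:lowerbound}), the task is to show that coalescence cannot rob the BCRW of the claimed speed. I would use a reverse coupling: keep a branching only when the two offspring \emph{do} avoid one another up to time $1/\beta$, and discard otherwise. The probability of non-coalescence on this time scale is asymptotic to $\pi w / (p_w \log(1/\beta))$ by the local central limit theorem for the walk on $\mathbb{Z}^2\times\mathbb{Z}_w$, where the $w$ arises because the vertical coordinate equidistributes on the torus $\mathbb{Z}_w$ in time $O(1)$, effectively multiplying the 2D Green's function by $1/w$. Iterating this branching-and-survival procedure over epochs of length $1/\beta$ yields an effective supercritical branching random walk whose front advances at the required rate, and the shape theorem on $\mathbb{Z}^2 \times \mathbb{Z}_w$ then converts this into a lower bound on $c_w(\beta)$.

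The main obstacle is pinning down the sharp constant $p_w\sqrt{\pi w}$. This requires a precise asymptotic for the meeting probability of two independent symmetric random walks on $\mathbb{Z}^2 \times \mathbb{Z}_w$ on the time scale $1/\beta$, which in turn reduces to Green's function asymptotics for the two-dimensional lattice walk with step variance $p_w$ in the horizontal plane (the vertical motion contributes only through the mixing factor $1/w$). Matching the heuristic $c_w(\beta)^2 \cdot \log(1/\beta) \to \pi w p_w^2$ between the upper and lower bounds requires careful control of error terms both in the Green's function expansion and in the pruning couplings, and this is where I expect the bulk of the technical work in the supporting lemmas to lie.
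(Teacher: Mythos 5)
Your overall route---sandwiching $c_w(\beta)$ via duality, pruning à la Durrett--Z\"ahle, a coalescence-free BRW comparison, and converting through the Bramson--Griffeath shape theorem---is the same as the paper's. However, two concrete points in your sketch would derail the argument as written. First, the pruning cutoff must be chosen strictly $o(1/\beta)$; the paper uses $\tau(\beta) = \beta^{-1}(\log(1/\beta))^{-1/2}$, not $1/\beta$. With cutoff $1/\beta$ the branching time $S \sim \mathrm{Exp}(\beta)$ satisfies $\P(S \leq 1/\beta) = 1 - e^{-1}$, a constant, so a macroscopic fraction of new particles would produce their own offspring before the decision period ends; the ``accepted-because-it-branched'' events would then occur at the same $\Theta(1/\log(1/\beta))$ rate as the ``accepted-because-it-escaped'' events, rather than being a lower-order correction (Lemma~\ref{lemma:branchingrate} requires $\alpha_1 = \Theta((\log 1/\beta)^{-3/2})$ versus $\alpha_2 \sim \mu_w/\log(1/\beta)$). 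The sublinear cutoff is also what ensures the displacement moments during a decision period (parts (4)--(6) of Lemma~\ref{lemma:branchingrate}) are $o(\sqrt{h(\beta)})$, which is indispensable for the path-perturbation estimates in Lemmas~\ref{lemma:approximateupperbound}--\ref{lemma:approximateupperbound3}.

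Second, your non-coalescence asymptotic $\pi w/(p_w\log(1/\beta))$ has $p_w$ in the wrong place; the correct statement (Lemma~\ref{ret_time_thm}) is $\P(T_0 > t) \sim p_w\pi w/\log t$, i.e.\ $\mu_w = p_w\pi w$. The horizontal step variance per coordinate is $p_w/2$, so the planar Green's function grows like $\log t/(\pi p_w)$ and the escape probability is its reciprocal, $\pi p_w/\log t$; multiplying by $w$ for equidistribution on $\mathbb{Z}_w$ gives $p_w\pi w/\log t$. As written your formula contradicts your own closing heuristic $c_w(\beta)^2\log(1/\beta)/\beta \to \pi w p_w^2$, since the effective branching rate $\beta\cdot\mu_w/\log(1/\beta)$ combined with lateral diffusivity $p_w/2$ would then give a front speed $\sqrt{\pi w/h(\beta)}$ rather than $p_w\sqrt{\pi w}/\sqrt{h(\beta)}$. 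Finally, on the lower bound your phrase ``iterating the branching-and-survival procedure over epochs'' is pointing in the right direction but undersells the difficulty: the BRW approximation to the pruned dual is only valid on \emph{finite} scaled time windows (Lemma~\ref{lemma:approximation}), because over long times the BRW accumulates particles that should have coalesced. This is why the paper discretizes spacetime into blocks and invokes a comparison with one-dependent oriented percolation (Lemmas~\ref{lemma:percolation} and~\ref{lemma:lowerbound}); without that block construction the iteration does not close.
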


\begin{figure}
    \centering
    \includegraphics[scale=1]{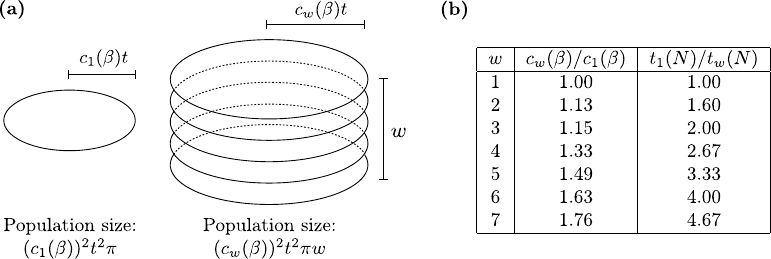}
    \caption{{\bf (a)} 
    For sufficiently large $t$, the population size of $\xi_t^0$ at time $t$ is approximately $(c_w(\beta))^2t^2\pi w$, where $c_w(\beta)$ is the radius of the asymptotic shape $D = D(\beta)$. {\bf (b)} The ratio $c_w(\beta)/c_1(\beta)$ shows how the radius of the asymptotic shape of $\xi_t^0$ changes with increasing $w$, and the ratio $t_1(N)/t_w(N)$, with $t_w(N)$ defined as in \eqref{eq:populationgrwoth}, shows how quickly the population reaches a given size $N$ as $w$ increases. The latter ratio grows according to $p_w w$.}
    \label{fig:speed_illustration}
\end{figure}

Our choice of analyzing type-1 propagation in the direction of the unit vector $e_1 = (1,0,0)$ is arbitrary.
Our arguments apply to any unit vector of the form $(\cos\theta, \sin \theta, 0)$, so propagation is the same in any direction along the first two coordinates 
in the $\beta \to 0$ regime.

In Theorem 1 of \cite{DurFooLed}, Durrett, Foo and Leder compute the propagation speed of the biased voter model on $\Z^2$ as $\sqrt{\pi (\beta/4)}/\sqrt{\log(1/\beta)} = (1/2) \sqrt{\pi\beta}/\sqrt{\log(1/\beta)}$ in the $\beta \to 0$ regime.
In contrast, our result for the $w=1$ case on $\Z^2 \times \Z_w$ is $c_1(\beta) \sim \sqrt{\pi\beta}/\sqrt{\log(1/\beta)}$ as $\beta \to 0$, which is larger by a factor of 2.
This discrepancy is due to a scaling error in the calculations in \cite{DurFooLed}.
On page 1396 of \cite{DurFooLed}, the authors change the time scale of the process in a way that reduces the fitness advantage of type-1 cells on $\Z^d$ to $\beta/(2d)$.
If the result for $\Z^2$ in \cite{DurFooLed} is read with this in mind, it is consistent with the $w=1$ case of our Theorem \ref{speed_thm}.

Note that if $\xi_t^0$, conditional on $\{\tau_\varnothing^0 = \infty\}$, is a union of two-dimensional disks of radius $c_w(\beta)t$ at time $t$, the total area across all $w$ layers is $(c_w(\beta))^2t^2 \pi w$ (Fig.~\ref{fig:speed_illustration}a). Thus, given $N$ sufficiently large, the time it takes for the type-1 population to reach size $N$ is approximately
\begin{align} \label{eq:populationgrwoth}
t_w(N) = (p_w \pi w)^{-1} h(\beta)^{1/2} N^{1/2}.
\end{align}
This implies that going from $w=1$ layer to $w > 1$ layers accelerates population growth by $p_w w$. For example, population growth is  twice as fast for $w=3$ layers as for $w=1$ layer, and over three times as fast for $w=5$ layers (Fig.~\ref{fig:speed_illustration}b).

\section{Outline of proof of main result}\label{sec_proofoutline}

\subsection{Duality} \label{sec:duality}

The biased voter model $(\xi_t^A)_{t \geq 0}$ admits a simple graphical construction, 
which allows us to define the entire system $\{(\xi_t^A)_{t \geq 0}: A \subseteq \mathbb{Z}^2 \times \Z_w\}$ on a common probability space, using a countable family of Poisson processes.
For a description of this construction, see e.g.~Section 2 of \cite{durrett95}, Section 3 of \cite{durrett1988lecture} or Appendix A of \cite{DurFooLed}.
By tracing the ancestry of particles in $\xi_t^A$ backwards in time, the biased voter model gives rise to a system of {\em branching coalescing random walks} $(\tilde\zeta_t^B)_{t \geq 0}$ on $\Z^2\times \Z_w$, which satisfy the duality relation
\begin{align}
\P(\xi_t^A \cap B \neq \varnothing) = \P(\tilde{\zeta}_t^B \cap A \neq \varnothing), \quad A,B \subseteq \Z^2\times \Z_w. \label{dual_relation}
\end{align}
The process $(\tilde\zeta_t^B)_{t \geq 0}$ can be described as follows: Each particle performs a simple, symmetric random walk (SSRW) on $\Z^2 \times \Z_w$ with jump rate 1, i.e.~each particle jumps at rate 1 to a randomly chosen neighboring site. Furthermore, each particle gives birth to a new particle at rate $\beta$, with the parent particle staying put and the daughter particle placed at a randomly chosen neighboring site. Any time two particles meet, they coalesce into a single particle. The following elementary properties of $\xi_t^A$ and $\tilde\zeta_t^B$ are easily verified:
\begin{itemize}
\item {\em Additivity}: For each $A,B \subseteq \Z^2 \times \Z_w$ and $t \geq 0$,
\begin{align} \label{eq:additivity}
    \xi_t^{A \cup B} = \xi_t^A \cup \xi_t^B \quad\text{and}\quad \tilde\zeta_t^{A \cup B} = \tilde\zeta_t^A \cup \tilde\zeta_t^B.
\end{align}
\item {\em Monotonicity:} For $A \subseteq B$ and $t \geq 0$,
\begin{align} \label{eq:monotonicity}
    \xi_t^{A} \subseteq \xi_t^B \quad\text{and}\quad \tilde\zeta_t^{A} \subseteq \tilde\zeta_t^B.
\end{align}
\item {\em Translation invariance:} For each $A \subseteq \Z^2 \times \Z_w$ and $x \in \Z^2 \times \Z_w$,
\begin{align} \label{eq:translinvariance}
    (\xi_t^{A})_{t \geq 0} \stackrel{d}{=} (x+\xi_t^{A-x})_{t \geq 0} \quad\text{and}\quad (\tilde\zeta_t^{A})_{t \geq 0} \stackrel{d}{=} (x+\tilde\zeta_t^{A-x})_{t \geq 0}.
\end{align}
\item {\em Symmetry:} For each $A,B \subseteq \Z^2 \times \Z_w$:
\begin{align} \label{eq:symmetry}
    (\xi_t^{A})_{t \geq 0} \stackrel{d}{=} (-\xi_t^{-A})_{t \geq 0} \quad\text{and}\quad (\tilde\zeta_t^{A})_{t \geq 0} \stackrel{d}{=} (-\tilde\zeta_t^{-A})_{t \geq 0}.
\end{align}
\end{itemize}
Due to the duality relation \eqref{dual_relation}, we can use the dual process $\tilde\zeta_t$ to study the propagation speed of the biased voter model $\xi_t$.
Direct analysis of $\tilde\zeta_t$ is complicated by its coalescing nature.
However, it turns out that when $\beta$ is small, most coalescence events in $\tilde\zeta_t$ will be between parent and daughter shortly after the daughter's birth.
Before elucidating this property further, we need to establish some fundamental properties of the dual process.

\subsection{Fundamental properties of dual process}

We begin our analysis of the dual process $\tilde\zeta_t$ by determining the long-run position of individual particles. In the following lemma, we extend the local central limit theorem (LCLT) for the discrete-time SSRW on $\Z^2$ to the multilayered setting $\Z^2 \times \Z_w$.
Since our arguments apply to $\mathbb{Z}^d \times \Z_w$ for any $d \geq 1$, we state and prove the result for the general case.
The proof is simple and proceeds as follows.
First, we decompose the discrete-time SSRW $(S_n)_{n \geq 0}$ on $\Z^d\times \Z_w$ into walks on $\mathbb{Z}^d$ and $\Z_w$, respectively,
and use a large deviations estimate to bound the number of steps in each direction.
We then apply the LCLT on $\mathbb{Z}^d$ to the $\Z^d$-walk (Theorem 2.1.3 of \cite{lawler2010}), and a convergence theorem for finite Markov chains to the $\Z_w$-walk (Theorem 4.9 of \cite{levin2017markov}).
Note that for $w=1$, the SSRW on $\Z^d \times \Z_w$ is equivalent to the SSRW on $\Z^d$, which is periodic.
For $w>1$, on the other hand, the SSRW on $\Z^d \times \Z_w$ is periodic if and only if $w$ is even.

\begin{lemma}[LCLT on $\Z^d \times \Z_w$] \label{lclt}
  Let $(S_n)_{n \geq 0}$ be the discrete-time SSRW on $\mathbb{Z}^d \times \Z_w$ with $S_0=0$. Set $b_1 :=2$, $b_w := 2$ if $w>1$ is even and $b_w := 1$ if $w>1$ is odd, and define
\begin{align} \label{eq:birthfirsttowdimensionsgeneral}
    p_{w,d} := \begin{cases} 1, & w=1, \\ 
    {2d}/({2d+1}), & w=2, \\ {d}/({d+1}), & w>2, \end{cases}
\end{align}
    as the probability that $(S_n)_{n \geq 0}$ takes a step in the $\mathbb{Z}^d$-direction, 
    with $p_{w,2} = p_w$ as defined in \eqref{eq:birthfirstwodimensions}.
    Then, for any $n \geq 1$ and $x \in \Z^d \times \Z_w$ so that $\P(S_n=x)>0$,
\begin{align} \label{eq:lcltdiscrete}
    n^{d/2} \P(S_n=x) = (b_w/w) \big(d/(2\pi p_{w,d})\big)^{d/2} + o(1). 
\end{align}
\end{lemma} 

\begin{proof}
Section \ref{app:localclt}.
\end{proof}

Let $(Z_t)_{t\geq 0}$ be the continuous-time SSRW on $\Z^d \times \Z_w$ with jump rate $\alpha > 0$ and $Z_0=0$.
Since in continuous time, the coordinates move independently, we can decompose the walk into independent components $Z_t = (\hat{Z}_t, Z^w_t)$, where $(\hat{Z}_t)_{t \geq 0}$ is the SSRW on $\Z^d$ with jump rate $\alpha p_{w,d}$, and $(Z^w_t)_{t \geq 0}$ is the SSRW on $\Z_w$ with jump rate $\alpha (1-p_{w,d})$, with $p_{w,d}$ defined as in \eqref{eq:birthfirsttowdimensionsgeneral}.
For $\hat{x} \in \Z^d$, set
\begin{align*}
   \textstyle  
   p_t(\hat{x}) := \big({d}/({2\pi \alpha p_{w,d}t})\big)^{d/2} \exp\big(\!-\!d \|\hat{x}\|^2/({2 \alpha p_{w,d} t})\big).
\end{align*}
By Theorem 2.1.3 of \cite{lawler2010}, 
there exists $c>0$ so that for all $\hat{x} \in \Z^d$ and all $t>0$,
\begin{align} \label{eq:lcltZdsqrt}
\textstyle 
& \textstyle|\P(\hat{Z}_t = \hat{x}) - p_t(\hat{x})| \leq c/(\alpha p_{w,d} t)^{(d+2)/2}.
\end{align}
Expression \eqref{eq:lcltZdsqrt} and independence of coordinates imply a continuous-time version of \eqref{eq:lcltdiscrete},
\begin{align*}
    \textstyle \lim_{t \to \infty} (\alpha t)^{d/2} \P(Z_t=x) = (1/w) \big(d/(2\pi p_{w,d})\big)^{d/2}, \quad x \in \Z^d \times \Z_w,
\end{align*}
since $(Z_t^w)_{t \geq 0}$ converges to the uniform distribution on $\Z_w$.

Whenever a new particle is born into the dual process $\tilde\zeta_t$, the parent and daughter perform independent SSRWs $Z_t^1$ and $Z_t^2$ on $\mathbb{Z}^2 \times \Z_w$ with jump rate 1, started at neighboring sites.
We next determine the asymptotic tail of $T_0$, the time at which $Z_t^1$ and $Z_t^2$ first meet, which we can equivalently view as the time of the first visit to the origin of the SSRW $\bar Z_t := Z_t^1-Z_t^2$ with jump rate 2.
Due to the recurrence of the SSRW in two dimensions, the two walks $Z_t^1$ and $Z_t^2$ are guaranteed to meet in finite time.
In the following lemma, we compute the rate of decrease of $\P(T_0>t)$ as $t \to \infty$.
In the proof, we generalize an argument given by Dvoretzky and Erd\"os for the SSRW on $\Z^2$ in \cite{dvor51}.
The only modification necessary is to substitute the LCLT on $\Z^2$ by the LCLT on $\Z^2 \times \Z_w$ (Lemma \ref{lclt}).

\begin{lemma}[Asymptotic tail of $T_0$]
\label{ret_time_thm}
Let $(Z_t)_{t \geq 0}$ be the SSRW on $\Z^2\times \Z_w$ with jump rate $\alpha> 0$, started at a nearest neighbor of the origin. Set $T_0:=\inf\{t \geq 0:Z_t=0\}$ and define
\begin{align*} 
\mu_w := p_w \pi w =
\begin{cases} \pi, & w=1, \\ 
(4/5){\pi w}, & w=2, \\ (2/3){\pi w}, & w>2, \end{cases}    
\end{align*}
where $p_w$ is the probability given by \eqref{eq:birthfirstwodimensions}. Then
\begin{align*} 
\P(T_0 > t)  \sim \mu_w/\log t, \quad t \to \infty.
\end{align*}
\end{lemma}

\begin{proof}
Section \ref{app:tailasymptotics}.
\end{proof}

Recall that each particle in $\tilde\zeta_t$ gives birth to a new particle at rate $\beta$, so the mean time between births along a particular lineage is $1/\beta$. Set
\begin{align} \label{eq:tau(beta)}
\tau(\beta) := (1/\beta) \big(1/\sqrt{\log(1/\beta)}\big).
\end{align}
By Lemma \ref{ret_time_thm}, a new particle avoids coalescence with its parent particle during the first $\tau(\beta)$ time units of its existence with probability
\begin{align*}
\P(T_0>\tau(\beta)) \sim \mu_w/\log(1/\beta), \quad \beta \to 0.
\end{align*}
Thus, most new particles coalesce with their parents before time $\tau(\beta)$, and since $\tau(\beta) = o(1/\beta)$, they are unlikely to produce their own offspring before coalescing. Ignoring such particles should simplify the process considerably without affecting its long-run growth. This is the basic idea of a pruning procedure suggested by Durrett and Z{\"a}hle \cite{durrett2007}, which we use to set up an approximation scheme to prove our main result (see Sections \ref{sec:upperbound} and \ref{sec:lowerboundapprox} below). 
The specific form of $\tau(\beta)$ in \eqref{eq:tau(beta)}
ensures not only that inconsequential particles are ignored, but also that new particles that do avoid coalescence up until time $\tau(\beta)$ are neither too far away from nor too close to their parent particles at that time.

It turns out that a separate approximation scheme is required for establishing an upper bound and a lower bound on the propagation speed of $\tilde\zeta_t$. Before describing the scheme in detail, we discuss it at a high level and provide some intuition for our main result.

\subsection{Overview of approximation scheme and intuition for main result} \label{sec:intuition}

Since particles in $\tilde\zeta_t$ branch at rate $\beta$, branching events become less and less frequent as $\beta \to 0$, and most events produce particles that coalesce with their parents shortly after birth.
If we ``reject'' branching events where new particles are lost to coalescence quickly, the rate of ``accepted'' events along a particular lineage is of order
\begin{align*} \label{eq:branchingrateunscaled}
\beta \cdot \big(\mu_w/\log(1/\beta)\big) = \mu_w/h(\beta), \quad \beta \to 0,
\end{align*}
where $h(\beta) := (1/\beta) \cdot \log(1/\beta)$. Assume for the moment that particles in $\tilde\zeta_t$ are sufficiently spread out that we can ignore other coalescence events. We then obtain a branching random walk with branching rate $\mu_w/h(\beta)$ and average number of particles $\exp\big((\mu_w/h(\beta)) t\big)$ alive at time $t$. If we project onto the $x$-axis, each particle performs a SSRW
on $\Z$ with jump rate $p_w/2$, where $p_w$ is defined as in \eqref{eq:birthfirstwodimensions}. For large $t$, its position has approximate distribution
\[
(\sqrt{p_w\pi t})^{-1} \cdot \exp\big(\!-\!x^2/(p_w t)\big), \quad x \in \Z,
\]
and the particle intensity (average number of particles) at $x \in \Z$ is approximately
\[
(\sqrt{p_w\pi t})^{-1}
\cdot \exp\big((\mu_w/h(\beta)) t -x^2/(p_w t)\big).
\]
If we set $|x| = ct$ for $c>0$, 
this quantity is nonzero in the $t \to \infty$ limit as long as
$c^2 < p_w \mu_w/h(\beta)$ i.e.~$c < a_w/\sqrt{h(\beta)}$, since $a_w = p_w \sqrt{\pi w} = \sqrt{p_w \mu_w}$. This suggests a long-run expansion rate of $a_w/\sqrt{h(\beta)}$ per unit time, which is our main result.

To make this argument rigorous, we need to show that for small $\beta$, the dual process $\tilde\zeta_t$ sufficiently resembles a branching random walk (BRW) with branching rate $\mu_w/h(\beta)$.
Since the time between accepted branching events is of order $h(\beta) \to \infty$ as $\beta \to 0$, and in this time, fluctuations in the movement of individual particles are of order $\sqrt{h(\beta)}$, it makes sense to speed up time by $h(\beta)$ and reduce space by $\sqrt{h(\beta)}$. We therefore introduce the scaled dual process
\begin{align} \label{eq:defnscaleddual}
    \tilde\zeta_t^\beta := h(\beta)^{-1/2} \cdot \tilde\zeta_{h(\beta)t},
\end{align}
and our goal is to show that for small $\beta$, this process sufficiently resembles a BRW with branching rate $\mu_w$.
Recall that by the definition \eqref{eq:scalarmultpldef} of scalar multiplication on $\R^2 \times \Z_w$, the spatial scaling by $h(\beta)^{-1/2}$ only affects the first two coordinates.

In the upper bound proof, the main work resides in identifying which branching events to accept, and in analyzing parent-daughter interactions under the accepted events. In the lower bound proof, 
we can only approximate $\tilde\zeta_t^\beta$ with a branching random walk on finite time intervals. We therefore discretize time and space and apply a percolation argument to obtain the long-run propagation speed of $\tilde\zeta_t^\beta$.

\def\arraystretch{1.2}
\begin{table}[]
\small
    \centering
    \begin{tabular}{|c|c|c|c|}
    \hline
             \multirow{2}{*}{$\xi_t$} & \multirow{2}{*}{Biased voter model} & \multirow{2}{*}{Section \ref{sec_mainstatement}} & Type-0 particles divide at rate 1, type-1 at rate $1+\beta$. \\
         &&& A neighbor selected uniformly at random is replaced.  \\
         \hline
         \multirow{2}{*}{$\tilde\zeta_t$} & \multirow{2}{*}{Dual process} & \multirow{2}{*}{Section \ref{sec:duality}} & Branching coalescing random walk (BCRW).  \\
         &&& Particles jump at rate 1, branch at rate $\beta$.  \\
         \hline
         \multirow{2}{*}{$\bar\phi_t$} & \multirow{2}{*}{Unaltered BRW} & \multirow{2}{*}{Section \ref{sec:unmodifiedbrw}} & Branching random walk (BRW) obtained by   \\
         &&& ignoring all coalescence events in dual process $\tilde\zeta_t$. \\
         \hline
        \multirow{2}{*}{$\mathring\phi_t$} & Pruned BRW, & \multirow{2}{*}{Section \ref{sec:unmodifiedbrw}} & BRW obtained by ignoring new particles in $\bar\phi_t$ \\
         &upper bound && that coincide quickly with their parent particles.  \\
         \hline 
                 \multirow{2}{*}{$\phi_t$} & Simple BRW, & \multirow{2}{*}{Section \ref{sec:markovianupper}} & BRW obtained by modifying particle paths in $\mathring\phi_t$ \\
         &upper bound && to uncondition movement at branching events. \\
         \hline 
                  \multirow{2}{*}{$\hat\zeta_t$} & Pruned dual, & \multirow{2}{*}{Section \ref{sec:pruneddual}} & BCRW obtained by ignoring new particles \\
         &lower bound&& that coalesce quickly with {\em any} particle in $\hat\zeta_t$.  \\
         \hline
          \multirow{2}{*}{$\mathring\psi_t$} & Pruned BRW, & \multirow{2}{*}{Section \ref{eq:prunedbrwlowerbound}} & BRW obtained by ignoring new particles in $\bar\phi_t$ \\
         &lower bound && that coincide quickly with their parent particles.  \\
         \hline 
                 \multirow{2}{*}{$\psi_t$} & Simple BRW, & \multirow{2}{*}{Section \ref{sec:propagationpruneddual}} & BRW obtained by modifying particle paths in $\mathring\psi_t$  \\
         &lower bound && to uncondition movement at branching events. \\
         \hline
    \end{tabular}
    \caption{List of the particle processes used in the proofs of Lemmas \ref{lemma:branchingrate} to \ref{lemma:lowerbound}.}
    \label{tab:my_label}
\end{table}

\subsection{Upper bound argument} \label{sec:upperbound}

To prove an upper bound, we couple the dual process $\tilde\zeta_t$ with a pruned branching random walk $\mathring\phi_t$, which we in turn couple with a simpler BRW $\phi_t$. We then analyze the propagation speed of $\phi_t$ to obtain an upper bound on the propagation speed of $\tilde\zeta_t$. For reference, we list the processes used in the proof of our main result along with a short description in Table \ref{tab:my_label}.

\subsubsection{Definition of pruned BRW $\mathring\phi_t$} \label{sec:unmodifiedbrw}
Consider a branching random walk $\bar\phi_t$ obtained by ignoring all coalescence events in the dual process $\tilde\zeta_t$.
In other words, particles in $\bar\phi_t$ jump at rate 1 and branch at rate $\beta$, and  whenever two particles meet, both are retained.
One particle follows the path of the coalesced particle in the dual process, and the other performs a new SSRW on $\Z^2 \times \Z_w$ independently of all other particles. By construction, each particle path in $\tilde\zeta_t$ also appears in $\bar\phi_t$, but $\bar\phi_t$ contains additional paths. 
Since $\bar\phi_t$ allows multiple particles to occupy the same site, it should be viewed as a sequence of sites in $\Z^2 \times \Z_w$, as opposed to a subset of $\Z^2 \times \Z_w$.
We will call $\bar\phi_t$ the {\em unaltered} BRW to distinguish it from the {\em pruned} BRW $\mathring\phi_t$, which we define now.

For a given branching event in $\bar\phi_t$, let $T_0$ be the time at which the new particle first coincides with its parent, and let $S$ be the time at which the new particle first produces its own offspring. Recall that $S$ is exponentially distributed with mean $1/\beta$, and it is independent of $T_0$. We categorize the branching events in $\bar\phi_t$ as follows:
\begin{itemize}
\item  {\bf Type-0:} $T_0 \leq \min\{S,\tau(\beta)\}$: The new particle quickly coincides with its parent.
    \item {\bf Type-1:} $S \leq \min\{T_0,\tau(\beta)\}$: The new particle quickly produces its own offspring.
    \item {\bf Type-2:} $\tau(\beta) \leq \min\{S,T_0\}$: The new particle neither coincides with its parent nor produces its own offspring before time $\tau(\beta)$.
\end{itemize}
We refer to $[0,T_0]$, $[0,S]$ and $[0,\tau(\beta)]$, respectively, as the {\em decision period} for each type of event. The pruned BRW $\mathring\phi_t$ is defined as follows (Fig.~\ref{fig:pruned_brw}):
\begin{itemize}
    \item A new particle born through a type-0 branching event in $\bar\phi_t$ is ignored in $\mathring\phi_t$.
    \item A new particle born through a type-1 event is introduced to $\mathring\phi_t$ at time $S$ after birth in $\bar\phi_t$, at the location it then occupies in $\bar\phi_t$. Its offspring is viewed as a new branching event in $\bar\phi_t$ and is evaluated according to the same rules as outlined here.
    \item A new particle born through a type-2 event is introduced to $\mathring\phi_t$ at time $\tau(\beta)$ after birth in $\bar\phi_t$, at the location it then occupies.
\end{itemize}
Once a new particle is introduced to $\mathring\phi_t$, it follows the same path as in $\bar\phi_t$.
Let $\mathring\phi_t^{(k)}$ for $k=0,1,2$ be the subprocess of $\bar\phi_t$ containing offsprings of particles in $\mathring\phi_t$ that have just been born through a type-$k$ branching event and whose decision period has not yet passed. Then
\begin{align} \label{eq:dualprunedbrwupper}
\tilde\zeta_t \subseteq \mathring\phi_t \cup \mathring\phi_t^{(0)} \cup \mathring\phi_t^{(1)} \cup \mathring\phi_t^{(2)},
\end{align}
i.e.~$\mathring\phi_t$ upper bounds the dual process $\tilde\zeta_t$ if we add newborn particles whose fate has not been decided yet. Expression \eqref{eq:dualprunedbrwupper} allows us to relate the propagation speed of $\tilde\zeta_t$ to that of $\mathring\phi_t$. Before doing so, we need more information on the branching dynamics of $\bar\phi_t$ and $\mathring\phi_t$.

\begin{figure}
    \centering
    \includegraphics[scale=1]{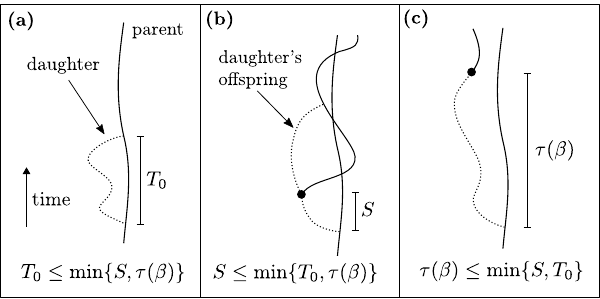}
    \caption{Categorization of branching events in the unaltered BRW $\bar\phi_t$ and definition of the pruned process $\mathring\phi_t$. {\bf (a)} If a new particle in $\bar\phi_t$ coincides quickly with its parent, it is not introduced to $\mathring\phi_t$. {\bf (b)} If the new particle produces its own offspring quickly, it gets introduced to $\mathring\phi_t$ at the time and location at which it gives birth. {\bf (c)} If the new particle neither coincides with its parent nor has its own offspring too quickly, it gets introduced to $\mathring\phi_t$ at time $\tau(\beta)$ after birth at the location it then occupies.}
    \label{fig:pruned_brw}
\end{figure}

\subsubsection{Branching in $\bar\phi_t$ and $\mathring\phi_t$}
In the following lemma, we show that in the $\beta \to 0$ regime, almost all branching events of the unaltered BRW $\bar\phi_t$ are type-0. In other words, only a small proportion of branching events is accepted to produce the pruned BRW $\mathring\phi_t$. We also show that type-2 branching events are much more frequent than type-1 events, meaning that most particles introduced to $\mathring\phi_t$ neither coincide with their parent nor produce their own offspring by time $\tau(\beta)$. 
We finally produce moment bounds on the distance traveled by a new particle during its decision period, as well the separation between parent and daughter throughout the decision period.

\begin{lemma} \label{lemma:branchingrate}
Let $(Z_t^1)_{t \geq 0}$ and $(Z_t^2)_{t \geq 0}$ be independent SSRWs on $\mathbb{Z}^2 \times \Z_w$ with jump rate 1, started at 0 and a nearest neighbor of 0. Set $T_0 := \inf\{t \geq 0: Z_t^1 = Z_t^2\}$, and let $S$ be an exponential random variable with mean $1/\beta$, independent of $(Z_t^1)_{t \geq 0}$ and $(Z_t^2)_{t \geq 0}$.
Then
\begin{enumerate}[(1)]
    \item $\alpha_0(\beta) := \P(T_0 \leq \min\{S,\tau(\beta)\}) \to 1$ as $\beta \to 0$.
    \item $\alpha_1(\beta) :=\P(S \leq \min\{T_0,\tau(\beta)\}) = \Theta\big(1/(\log(1/\beta))^{3/2}\big)$ as $\beta \to 0$,
    \item $\alpha_2(\beta) :=\P(\tau(\beta) \leq \min\{S,T_0\}) \sim \mu_w/\log(1/\beta)$ as $\beta \to 0$.
\end{enumerate}
Furthermore, there exists $C>0$ so that for sufficiently small $\beta$,
\begin{enumerate}[(1)]
    \item[(4)] $\E\big[\sup_{t \leq \tau(\beta)} ||Z_t^1||^j \,\big|\, T_0 \leq \min\{S,\tau(\beta)\}\big] \leq C j! \tau(\beta)^{j/2}$, \quad $j \geq 1$,
    \item[(5)] $\E\big[\sup_{t \leq \tau(\beta)} ||Z_t^1||^j \,\big|\, S \leq \min\{T_0,\tau(\beta)\}\big] \leq C j! (\log(1/\beta))^{1/2} \tau(\beta)^{j/2}$, \quad $j \geq 1$,
    \item[(6)] $\E\big[\sup_{t \leq \tau(\beta)} ||Z_{t}^1||^j \,\big|\, \tau(\beta) \leq \min\{S,T_0\}\big] \leq C j! (\log(1/\beta))^{1/2} \tau(\beta)^{j/2}$, \quad  $j \geq 1$.
\end{enumerate}
Each of (4)-(6) continues to hold if $Z_t^1$ is replaced by $Z_t^2$ or  $\bar{Z_t} = Z_t^1-Z_t^2$.
\end{lemma}

\begin{proof}
Section \ref{app:branchingratepruneddual}.
\end{proof}

\subsubsection{From dual $\tilde\zeta_t$ to pruned BRW $\mathring\phi_t$}

Using \eqref{eq:dualprunedbrwupper} and Lemma \ref{lemma:branchingrate}, we can obtain the following relationship between the propagation speed of the dual process $\tilde\zeta_t$ and the pruned BRW $\mathring\phi_t$ (Lemma \ref{lemma:approximateupperbound}). By \eqref{eq:dualprunedbrwupper}, $\mathring\phi_t$ upper bounds $\tilde\zeta_t$ if we add newborn particles whose fate has not been decided yet. By (4)-(6) in Lemma \ref{lemma:branchingrate}, these newborn particles will not be too far from their parent particles in $\mathring\phi_t$, so adding them should not materially affect the propagation speed of $\mathring\phi_t$.
As motivated by the discussion in Section \ref{sec:intuition}, we perform our analysis
using the scaled processes $\tilde\zeta_t^{\beta,0} := h(\beta)^{-1/2} \tilde\zeta_{h(\beta)t}^0$ and $\mathring\phi_t^{\beta,0} := h(\beta)^{-1/2} \mathring\phi_{h(\beta)t}^0$, where the 0 means that the processes are started with a single particle at the origin.

\begin{lemma} \label{lemma:approximateupperbound}
Fix $a>0$ and $0 < \rho < 1$, and define $A_r := [r,\infty) \times \mathbb{R} \times \Z_w$.
For each $\delta>0$, there exist $M>0$ and $\beta_0>0$ so that
\begin{align*}
     &\textstyle \P\big(\tilde\zeta_{t}^{\beta,0} \cap A_{b+at} \neq \varnothing\big) \leq 4\,\P\big(\mathring\phi_{t}^{\beta,0} \cap A_{b+\rho at} \neq \varnothing \big) + M e^{-\delta t}, \quad \beta \leq \beta_0, \; t>0, \; b \in \R.
\end{align*}
\end{lemma}

\begin{proof}
Section \ref{app:pruneddualapproxdual}.
\end{proof}

\subsubsection{Definition of simple BRW $\phi_t$} \label{sec:markovianupper}

At each accepted branching event of the pruned BRW $\mathring\phi_t$ (type-1 or type-2), the new particle is introduced with a time delay, and the location at which it is introduced is conditioned on it not coinciding too quickly with its parent. The parent's path during the decision period is likewise influenced by this conditioning. We next couple $\mathring\phi_t$ with a simpler BRW $\phi_t$ where we modify particle paths as follows:
\begin{itemize}
    \item At each accepted branching event of $\mathring\phi_t$, the paths of parent and daughter during the decision period are replaced by two independent SSRWs started at the parent's location.
    From the end of the decision period onward, the two new walks make the same transitions as parent and daughter make in $\mathring\phi_t$.
    An illustration of this procedure is shown in Figure \ref{fig:markovian_brw}, and a more formal mathematical description is given in the proof of Lemma \ref{lemma:approximateupperbound2} (Section \ref{app:brwapproxbrw_upper_2}).
\end{itemize}
With these modifications, both parent and daughter follow independent, unconditioned paths at each branching event of $\phi_t$. 
We must make further modifications, however, since the path followed by the parent at a type-0 branching event in $\mathring\phi_t$, in which case the daughter is not introduced to $\mathring\phi_t$, is conditioned on coinciding quickly with the daughter. This conditioning will affect particle paths in $\phi_t$ if not addressed. We therefore make the following modifications:
\begin{itemize}
    \item At a type-0 branching event in $\mathring\phi_t$, the parent follows a path $(Z_t^1)_{t \geq 0}$ conditioned on $\{T_0 \leq \min\{S,\tau(\beta)\}\}$ during $[0,\tau(\beta)]$, in the notation of Lemma \ref{lemma:branchingrate}.
    \begin{itemize}
        \item With probability $\alpha_0(\beta)$, with $\alpha_0(\beta)$ defined as in Lemma \ref{lemma:branchingrate}, we make no modification to the parent's path. 
        \item With probability $\alpha_1(\beta)$, we replace the parent's path on $[0,\tau(\beta)]$ with a path $(Z_t^1)_{t \geq 0}$ conditioned on $\{S \leq \min\{T_0,\tau(\beta)\}\}$. From time $\tau(\beta)$ onward, the new path makes the same transitions as the parent in $\mathring\phi_t$.
        \item With probability $\alpha_2(\beta)$, we replace the parent's path on $[0,\tau(\beta)]$ with a path $(Z_t^1)_{t \geq 0}$ conditioned on $\{\tau(\beta) \leq \min\{S,T_0\}\}$. From time $\tau(\beta)$ onward, the new path makes the same transitions as the parent in $\mathring\phi_t$.
    \end{itemize}
\end{itemize}
With these modifications, we remove any effect of daughter particles not introduced to $\mathring\phi_t$ on the paths followed by their parent particles in $\phi_t$. By the above construction, $\phi_t$ has the following three simplifying properties:
\begin{itemize}
    \item Particles follow independent, unconditioned SSRWs at all times.
    \item Time between branching events is exponentially distributed.
    \item New particles are born to their parents' locations.
\end{itemize}

\begin{figure}
    \centering
    \includegraphics[scale=0.95]{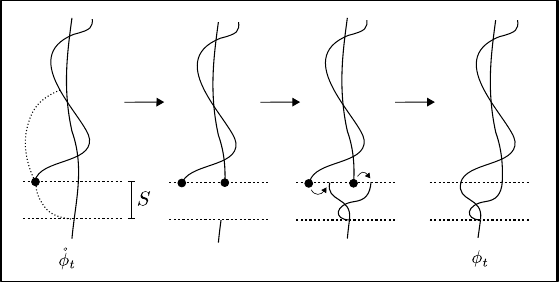}
    \caption{The simple BRW $\phi_t$ is obtained from the pruned BRW $\mathring\phi_t$ by making modifications to the paths followed by parent and daughter at each branching event of $\mathring\phi_t$. In the figure, we show the procedure for a type-1 branching event. Two new independent SSRWs are started at the parent's location at the beginning of the decision period, which replace the paths of parent and daughter during the decision period. From the end of the decision period onward, the paths followed by parent and daughter in $\mathring\phi_t$ are shifted to meet the two new walks in $\phi_t$.
    }
    \label{fig:markovian_brw}
\end{figure}

\subsubsection{From pruned BRW $\mathring\phi_t$ to simple BRW $\phi_t$} \label{sec:fromprunedbrwtomarkovian}

Working with the scaled versions $\mathring\phi_t^\beta := h(\beta)^{-1/2} \mathring\phi_{h(\beta)t}$ and $\phi_t^\beta := h(\beta)^{-1/2} \phi_{h(\beta)t}$, we show in the following lemma that the path followed by an arbitrary particle in $\phi_t^\beta$ is never too far away from the corresponding path in $\mathring\phi_t^\beta$ when $\beta$ is small. 
Since $\phi_t^\beta$ is defined by perturbing particle paths in $\mathring\phi_t^\beta$ at branching events, we need to show that the accumulated perturbation up until time $t$ is not too large. To do so, we first establish an upper bound on the number of perturbations by time $t$, and we then use the moment bounds established in (4)-(6) of Lemma \ref{lemma:branchingrate} to bound the accumulated perturbation.

\begin{lemma} \label{lemma:approximateupperbound2}
For a particle chosen uniformly at random from $\phi_t^{\beta,0}$, let $(Y_s^\beta)_{s \leq t}$ be the path followed by this particle and its ancestors, and let $(\mathring Y_s^\beta)_{s \leq t}$ be the corresponding path in $\mathring \phi_t^{\beta,0}$.  Then, for any $r>0$ and $\delta>0$, there exist
$M>0$ and  $\beta_0>0$ so that
\begin{align*}
     &\textstyle \P\big(\sup_{s \leq t} ||Y_s^\beta-\mathring Y_s^\beta||>r t\big) \leq M e^{-\delta t}, \quad \beta \leq \beta_0, \; t>0.
\end{align*}
\end{lemma}

\begin{proof}
Section \ref{app:brwapproxbrw_upper_2}.
\end{proof}

Using Lemma \ref{lemma:approximateupperbound2}, we can obtain the following relationship between the propagation speed of $\mathring\phi_t^\beta$ and $\phi_t^\beta$ (Lemma \ref{lemma:approximateupperbound3}). In the proof, we use the fact that the mean number of particles alive at time $t$ in $\phi_t^\beta$ is $\exp((\mu_w+o(1))t)$, and that the error in approximating $\mathring\phi_t^\beta$ with $\phi_t^\beta$ on a particle-by-particle basis is sufficiently small by Lemma \ref{lemma:approximateupperbound2} to ensure a small total error.

\begin{lemma} \label{lemma:approximateupperbound3}
Fix $a>0$ and $0 < \rho < 1$, and define $A_r := [r,\infty) \times \mathbb{R} \times \Z_w$.
For each $\delta>0$, there exist
$M>0$ and 
$\beta_0>0$ so that
\begin{align*}
     &\textstyle \P\big(\mathring\phi_{t}^{\beta,0} \cap A_{b+at} \neq \varnothing\big) \leq \P\big(\phi_{t}^{\beta,0} \cap A_{b+\rho at} \neq \varnothing \big) + M e^{-\delta t}, \quad \beta \leq \beta_0, \; t>0, \; b \in \R.
\end{align*}
\end{lemma}

\begin{proof}
Section \ref{app:brwapproxbrw_upper}.
\end{proof}

\subsubsection{Upper bound result for $\xi_t$}

With the above ingredients, we can establish the following upper bound result on the propagation speed of the biased voter model $(\xi_t^0)_{t \geq 0}$ on $\Z^2 \times \Z_w$ conditioned on nonextinction (Lemma \ref{lemma:upperbound}). The proof is split into three key steps. First, we remove the conditioning on nonextinction by waiting until $\xi_t^0$ has covered a sufficiently large box. We then introduce duality using \eqref{dual_relation} and use Lemmas \ref{lemma:approximateupperbound} and \ref{lemma:approximateupperbound3} to pass from the dual process $\tilde\zeta_t$ to the simple BRW $\phi_t$. We finally obtain the desired result by analyzing the tail of $\phi_t$. 

\begin{lemma} \label{lemma:upperbound}
Define $\tau_{\varnothing}^A := \min\{t \geq 0 : \xi_{t}^A = \varnothing \}$ for $A \subseteq \mathbb{Z}^2 \times \Z_w$. For each $\kappa>1$, there 
exists a family of random variables $(S_{\beta})_{\beta>0}$, with $\P(S_\beta<\infty|\tau_\varnothing^0=\infty)=1$ for each $\beta>0$, so that
\begin{align*}
\textstyle \lim_{\beta\to 0} \liminf_{t\to\infty} \P\big(
\lceil\kappa a_wh(\beta)^{1/2} t\rceil e_1 \notin \xi_{S_\beta+h(\beta)t}^0 \,\big|\,  \tau_{\varnothing}^0 = \infty \big)  = 1,
\end{align*}
where $a_w := p_w \sqrt{\pi w}$. 
\end{lemma}

\begin{proof}
Section \ref{app:upperboundproof}.
\end{proof}

\subsection{Lower bound argument} \label{sec:lowerboundapprox}

To prove a lower bound, we couple the dual process $\tilde\zeta_t$ with a pruned dual process $\hat\zeta_t$ and a pruned BRW $\mathring\psi_t$, which we in turn couple with a simpler BRW $\psi_t$. Unfortunately, as mentioned previously, the scaled versions of these processes only behave similarly on finite time intervals. 
We therefore discretize time and space and apply a percolation argument to determine the propagation speed of $\psi_t$ in the discretized spacetime. 

\subsubsection{Definition of pruned dual $\hat\zeta_t$} \label{sec:pruneddual}
To define the pruned dual process $\hat\zeta_t$, we start with the dual process $\tilde\zeta_t$.
For a given branching event in $\hat\zeta_t$, let $T_0'$ be the time at which the new particle coalesces with {\em any other particle in $\hat\zeta_t$}, and let $S$ be the time at which the new particle first produces its own offspring. We categorize the branching events in $\hat\zeta_t$ as follows:
\begin{itemize}
\item {\bf Type-0:} $T_0' \leq \min\{S,\tau(\beta)\}$: The new particle quickly coalesces with another particle.
    \item {\bf Type-1:} $S \leq \min\{T_0',\tau(\beta)\}$: The new particle quickly produces its own offspring.
    \item {\bf Type-2:} $\tau(\beta) \leq \min\{T_0',S\}$: The new particle neither coalesces with another particle nor produces its own offspring before time $\tau(\beta)$.
\end{itemize}
The pruned process $\hat\zeta_t$ is obtained by only accepting type-2 branching events.
In case of acceptance, the new particle is introduced to $\hat\zeta_t$ at time $\tau(\beta)$ after birth in $\tilde\zeta_t$, at the location it then occupies.
From the time of introduction to $\hat\zeta_t$, the new particle follows the same path as in the dual process $\tilde\zeta_t$, and it coalesces with other particles in $\hat\zeta_t$.
By construction, the pruned process $\hat\zeta_t$ lower bounds $\tilde\zeta_t$ in the following sense:
    \begin{align} \label{eq:dualprunedduallower}
    \tilde\zeta_t \supseteq \hat\zeta_t, \quad t \geq 0.
    \end{align}
    
    \subsubsection{From pruned dual $\hat\zeta_t$ to pruned BRW $\mathring\psi_t$} \label{eq:prunedbrwlowerbound}
    
    As in the proof of the upper bound, we consider an unaltered BRW $\bar\phi_t$ obtained from ignoring all coalescence events in the dual process $\tilde\zeta_t$. We classify its branching events into type-0, type-1 and type-2 as in Section \ref{sec:unmodifiedbrw}, using $T_0$, the time at which a new particle first coincides with its {\em parent particle}. We then define the pruned BRW $\mathring\psi_t$ by only accepting type-2 branching events.
    Note that the difference between $\hat\zeta_t$ and $\mathring\psi_t$ is that the latter process accepts a new particle that coincides with a particle other than its parent before time $\tau(\beta)$, and it ignores any coalescence (whether with the parent or another particle) that occurs after the new particle has been introduced.
    
    The pruned BRW $\mathring\psi_t$ may not appear useful for determining a lower bound on the propagation of $\hat\zeta_t$, as its growth is not checked to the same degree by coalescence.
    However, working with the scaled versions $\hat\zeta_t^\beta := h(\beta)^{-1/2} \hat\zeta_{h(\beta)t}$ and $\mathring\psi_t^\beta := h(\beta)^{-1/2} \mathring\psi_{h(\beta)t}$, we can show that if $\hat\zeta_t^\beta$ is started with sufficient spacing between the initial particles, then as $\beta \to 0$, the only coalescence that occurs during a finite time interval will be between parent and daughter during a decision period. In other words, $\hat\zeta_t^\beta$ behaves like $\mathring\psi_t^\beta$ on finite time intervals (in the scaled spacetime).
    We obtain the following lemma, whose proof follows 
    from an induction argument similar to the one given on pages 1758-1759 of Durrett and Z{\"a}hle \cite{durrett2007}.

\begin{lemma} \label{lemma:approximation}
Set $d(\beta) := \beta^{-1/2}(\log(1/\beta))^{-1}$. Let ${\cal A} = {\cal A}(\beta)$ denote the collection of finite subsets of $\Z^2 \times \Z_w$ in which points are pairwise separated by at least $d(\beta)$.
Set $A^\beta := h(\beta)^{-1/2}A$ for $A \in {\cal A}$ and ${\cal A}^\beta := h(\beta)^{-1/2} {\cal A}$.
Then, for any $K>0$ and $T>0$,
\[
\textstyle \sup_{A \in {\cal A}, \; |A| \leq K} \P\big(\{(\hat\zeta_t^{\beta,A^\beta})_{t \leq T} \neq (\mathring\psi_t^{\beta,A^\beta})_{t \leq T}\} \cup
\{\mathring\psi_T^{\beta,A^\beta} \notin {\cal A}^\beta\}\big) 
\to 0, \quad \beta \to 0.
\]
\end{lemma}

\begin{proof}
Section \ref{app:prunedbrwapproximatedpruneddual}.
\end{proof}

\subsubsection{Propagation of pruned dual $\hat\zeta_t$} \label{sec:propagationpruneddual}

In the following lemma (Lemma \ref{lemma:percolation}), we show that for any $2/3<\theta<1$, we can find $L$ and $K$ so that if $\hat\zeta_t^\beta$ is started with $K$ particles in a box of diameter $\theta a_w L$, then $L$ time units later, there will be at least $K$ particles in an adjacent box of diameter $\theta a_wL$, with high probability. This suggests that during $[0,L]$, the propagation speed of $\hat\zeta_t^\beta$ is at least $\theta a_w$, which translates into the desired lower bound of $\theta a_w / \sqrt{h(\beta)}$ in the unscaled spacetime. 
The specific form of the result \eqref{eq:lemma9result} of Lemma \ref{lemma:percolation} enables us to define a lower-bounding percolation process (Fig.~\ref{fig:percolation}a) using a comparison theorem from Section 4 of \cite{durrett95}, in which we discretize time into blocks of length $L$ and space into boxes of diameter $\theta a_w L$.
A similar construction is carried out in Durrett and Z{\"a}hle \cite{durrett2007}, except we must shorten their time blocks from length $L^2$ to length $L$ to obtain a tight lower bound on the propagation speed of $\hat\zeta_t^\beta$.
To prove Lemma \ref{lemma:percolation}, we use Lemma \ref{lemma:approximation} to approximate $\hat\zeta_t^\beta$ with $\mathring\psi_t^\beta$, and we then approximate $\mathring\psi_t^\beta$ with a simpler BRW $\psi_t^\beta$ as in the proof of the upper bound.
We finally analyze $\psi_t^\beta$ to obtain the result.

\begin{lemma} \label{lemma:percolation}
For $0<\theta<1$ and $L>0$, define (Fig.~\ref{fig:percolation}b)
 \begin{align*}
     & I_0^\theta := [-(1/2)\theta a_w L,(1/2)\theta a_w L]^2 \times \Z_w,  \\
     & I_k^\theta := I_0^\theta + k \cdot \theta a_w Le_1, \; k \in \mathbb{Z},
 \end{align*}
and
 \[
 {\cal A}^{\beta,\theta,K,k} := \{A^\beta \in {\cal A}^{\beta}: |A^\beta \cap I_k^\theta | \geq K\},
 \]
 with ${\cal A}^{\beta}$ defined as in Lemma \ref{lemma:approximation}.
 Let $(\hat\zeta_{t}^{\beta,A^\beta,\theta})_{t \geq 0}$ denote a pruning of $(\hat\zeta_{t}^{\beta,A^\beta})_{t \geq 0}$ with particles killed as soon as they exit the box $I_\Delta^\theta$ with $I_\Delta^\theta := [-2\theta a_w L,2\theta a_w L]^2 \times \mathbb{Z}_w$.
 Then, for any $2/3<\theta<1$ and $\varepsilon>0$, there exist $L=L(\theta)>0$, $K=K(\theta,\varepsilon)>0$ and $\beta_0 = \beta_0(\theta,\varepsilon)>0$ such that for any $A^\beta \in {\cal A}^{\beta,\theta,K,0}$ with $|A^\beta|=K$, and any $\beta \leq \beta_0$,
 \begin{align} \label{eq:lemma9result}
\P(\hat{\zeta}_{{L}}^{\beta,A^\beta,\theta} \in {\cal A}^{\beta,\theta,K,k}) \geq 1-\varepsilon, \quad k \in \{-1,1\}.
\end{align}
\end{lemma}

\begin{proof}
Section \ref{app:percolation}.
\end{proof}

    \begin{figure}
    \centering
    \includegraphics[scale=1]{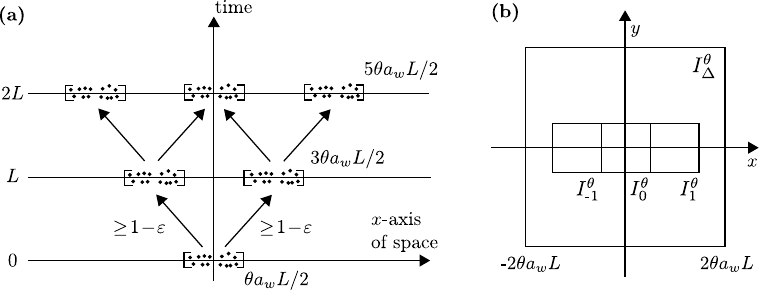}
    \caption{{\bf (a)} Graphical depiction of the percolation construction embedded in Lemma \ref{lemma:percolation}. {\bf (b)} Illustration of the two-dimensional projection of the sets $I_k^\theta$ for $k \in \{-1,0,1\}$ and the set $I_\Delta^\theta$ defined in Lemma \ref{lemma:percolation}.
    }
    \label{fig:percolation}
\end{figure}

\subsubsection{Lower bound result for $\xi_t$}

With the above ingredients, we can establish the following lower bound on the propagation speed of the biased voter model $(\xi_t^0)_{t \geq 0}$ on $\Z^2 \times \Z_w$ conditioned on nonextinction (Lemma \ref{lemma:lowerbound}). The proof is split into three key steps. In the first two steps, we remove the conditioning on nonextinction and introduce duality. In the final step, we use Lemma \ref{lemma:percolation} to define a lower-bounding percolation process, which we then analyze using results from \cite{Dur84} and \cite{liggett2012interacting}. 

\begin{lemma} \label{lemma:lowerbound}
Define $\tau_{\varnothing}^A := \min\{t \geq 0 : \xi_{t}^A = \varnothing \}$ for $A \subseteq \mathbb{Z}^2 \times \Z_w$. For each $2/3<\rho<1$, there exists a constant $L>0$ and a family of random variables $(S_{\beta})_{\beta>0}$, with $\P(S_\beta<\infty|\tau_\varnothing^0=\infty)=1$ for each $\beta>0$, so that
\begin{align*}
\textstyle \lim_{\beta\to 0} 
\liminf_{n\to\infty} \P\big(
\xi_{S_\beta+2nLh(\beta)}^0 \cap \big[2n \rho a_w L h(\beta)^{1/2},\infty\big) e_1 \neq  \varnothing \,\big|\,  \tau_{\varnothing}^0 = \infty \big) = 1,
\end{align*}
where $a_w := p_w \sqrt{\pi w}$. 
\end{lemma}

\begin{proof}
Section \ref{app:lowerboundproof}.
\end{proof}

Note that due to the discretization involved in the proof, Lemma \ref{lemma:lowerbound} only addresses the propagation of $(\xi_t)_{t \geq 0}$ along a subsequence of timepoints. This turns out to be sufficient when combined with the Bramson-Griffeath shape theorem \eqref{eq:shapethm}, as we show next.

\section{Proof of main result}\label{sec_mainproof}

In this section, we complete the proof of our main theorem (Theorem \ref{speed_thm}) by showing how it follows from the lower and upper bound results of Lemmas \ref{lemma:upperbound} and \ref{lemma:lowerbound}, together with the shape theorem \eqref{eq:shapethm}. \\

\begin{pfofthm}{Theorem \ref{speed_thm}}
    Fix $0<\delta<1$ and let $(\beta_i)_{i \geq 1}$ be a sequence of real numbers converging to 0. 
    Apply Lemma \ref{lemma:upperbound} with $\kappa := 1+\delta$ to obtain finite random variables 
    $(S_{\beta_i})_i$ and an integer $i_0$ so that for $i \geq i_0$, 
    \begin{align*}
    \textstyle \liminf_{n\to\infty} \P\big(\lceil(1+\delta) a_wh(\beta_i)^{1/2} n\rceil e_1 \notin \xi_{S_{\beta_i}+h(\beta_i)n}^0\,\big|\,  \tau_{\varnothing}^0 = \infty\big) \geq 1/2.
    \end{align*}
    It follows that for $i \geq i_0$,
    \begin{align}
    \textstyle &\P\big(\lceil(1+\delta) a_wh(\beta_i)^{1/2} n\rceil e_1 \notin \xi_{S_{\beta_i}+h(\beta_i)n}^0 \text{ for infinitely many $n$}\,\big|\,  \tau_{\varnothing}^0 = \infty\big) \geq 1/2.
    \label{eq:upperboundcontradiction}
    \end{align}
       Fix $i \geq i_0$. By the shape theorem \eqref{eq:shapethm}, 
    \begin{align} \label{eq:consofshapetheorem}
    \P\big(\exists t_\ast<\infty: \big\llbracket\!-\!\lfloor (1-\delta)c_w(\beta_{i})t\rfloor,\lfloor(1-\delta)c_w(\beta_{i}) t\rfloor\big\rrbracket e_1 \subseteq \xi_t^0, \; t \geq t_\ast \,\big|\, \tau_\varnothing^0 = \infty\big) = 1,
    \end{align}
    where $c_w(\beta_{i})$ is the radius of the asymptotic shape $D = D(\beta_i)$ as defined in \eqref{correctasymptoticspeed}, and $\llbracket m,n\rrbracket = \{m,m+1,\ldots,n\}$ for integers $m<n$.
    Assume now, by way of contradiction, that
    \[
    c_w(\beta_i) \geq \big((1+2\delta)/(1-2\delta)\big) \cdot a_wh(\beta_i)^{-1/2}.
    \]
    For sufficiently large $n$ (which depends on the outcome $\omega$),
        \begin{align*}
         & \lceil(1+\delta) a_wh(\beta_i)^{1/2} n\rceil/(S_{\beta_i}+h(\beta_i) n) \leq (1+2\delta) a_wh(\beta_i)^{-1/2}, \\
         &     \lfloor (1-\delta) c_w(\beta_{i}) (S_{\beta_i}+h(\beta_i)n)\rfloor/(S_{\beta_i}+h(\beta_i)n) \geq (1-2\delta)c_w(\beta_{i}),
    \end{align*}
    which implies
        \[
    \lceil(1+\delta) a_wh(\beta_i)^{1/2} n\rceil \leq \lfloor (1-\delta) c_w(\beta_{i}) (S_{\beta_i}+h(\beta_i) n)\rfloor.
    \]
    Conditional on $\{\tau_\varnothing^0 = \infty\}$, we must then have $\lceil(1+\delta) a_wh(\beta_i)^{1/2} n\rceil e_1 \in \xi_{S_{\beta_i}+h(\beta_i) n}^0$ for all but finitely many $n$ with probability 1 by \eqref{eq:consofshapetheorem}, which contradicts \eqref{eq:upperboundcontradiction}. We can therefore conclude that
    \[
    c_w(\beta_i) \leq \big((1+2\delta)/(1-2\delta)\big) \cdot a_wh(\beta_i)^{-1/2}, \quad i \geq i_0.
    \]
    Sending $i \to \infty$, 
    and noting that the subsequence $\{\beta_i\}_{i \geq 1}$ is arbitrary, we obtain
    \begin{align*}
      \textstyle \limsup_{\beta \to 0} c_w(\beta)\big/\big({a_wh(\beta)^{-1/2}}\big) \leq (1+2\delta)/(1-2\delta).
    \end{align*}
    Sending $\delta \to 0$ then yields $\limsup_{\beta \to 0} c_w(\beta)/(a_wh(\beta)^{-1/2}) \leq 1$. 
    Applying a similar argument to the lower bound result of Lemma \ref{lemma:lowerbound} will show that
        \begin{align*}
      \textstyle \liminf_{\beta \to 0} c_w(\beta)\big/\big({a_wh(\beta)^{-1/2}}\big) \geq 1,
    \end{align*}
    and we can conclude that $c_w(\beta) \sim a_w h(\beta)^{-1/2}$ as desired.
\end{pfofthm}

\section{Application to cancer initiation and field cancerization} \label{sec:applications}

\begin{figure}[t]
    \centering
    \hspace*{-0.6cm}\includegraphics[scale=1.2]{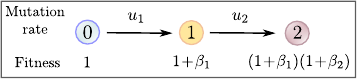}
    \caption{In the two-step mutational model of cancer (Section \ref{sec:applications}), the mutation rates are $u_1$ and $u_2$ for type-0 and type-1 cells, respectively. Type-1 cells have fitness advantage $\beta_1$ over type-0 cells, and type-2 cells have fitness advantage $\beta_2$ over type-1 cells.
    }
    \label{fig:twostepmodel}
\end{figure}

\begin{figure}[b]
    \centering
    \hspace*{-0.6cm}\includegraphics[scale=1]{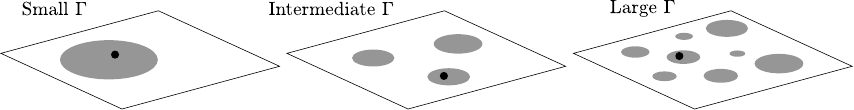}
    \caption{The dynamics of cancer initiation on $\Z_L^2 \times \Z_w$ under the two-step mutational model of Section \ref{sec:applications} are determined by the metaparameter $\Gamma$ as defined in \eqref{eq:gammma}. For small $\Gamma$, the first successful cancer cell (type-2) arises from the first successful premalignant clone (type-1), but as $\Gamma$ increases, cancer can originate from one of several successful type-1 clones (intermediate $\Gamma$), or even an unsuccessful type-1 clone before it goes extinct (large $\Gamma$).}
    \label{fig:regimes}
\end{figure}

We now use our main result to explore the dynamics of cancer initiation and field cancerization under a two-step mutational model of cancer.
In this section, as in \cite{DurMose2015}, \cite{DurFooLed} and \cite{FLR2014}, we assume finite tissue of the form $\Z_L^2 \times \Z_w$, where $L$ is chosen so that the total number of cells in the tissue is $N$, with $N$ typically of order at least $10^6$.
We now impose the same periodic boundary condition along the first two dimensions as along the third dimension.

\begin{figure}
    \centering
    \includegraphics[scale=1]{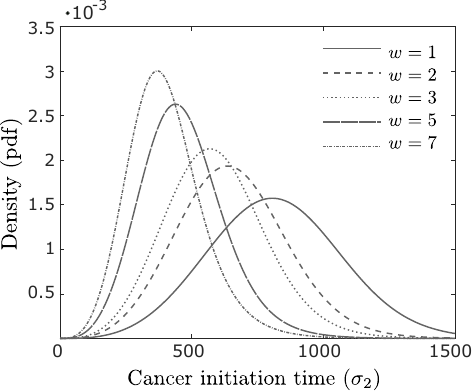}
    \caption{The density of the cancer initiation time $\sigma_2$ for a few values of the tissue thickness $w$. 
    Other parameter values are $N = 10^6$, $u_1 = 10^{-6}$, $u_2 = 10^{-5}$ and $\beta = 0.01$. }
    \label{fig:sigma2}
\end{figure}

Suppose each site in $\Z_L^2 \times \Z_w$ is initially occupied by a normal cell (type-0). Each type-0 cell mutates to a premalignant type-1 cell, with fitness advantage $\beta_1>0$ over normal cells, at exponential rate $u_1$. A type-1 cell gives rise to a successful type-1 clone (one that does not go extinct) with probability $\beta_1/(1+\beta_1)$ by \eqref{eq:gamblersruin}, in which case its long-run expansion rate $c_w(\beta_1)$ is given by our main theorem (Theorem \ref{speed_thm}). Each type-1 cell mutates to a cancer cell (type-2), with fitness advantage $\beta_2>0$ over type-1 cells, at rate $u_2$ (Fig.~\ref{fig:twostepmodel}). As before, a type-2 cell gives rise to a successful clone with probability $\beta_2/(1+\beta_2)$. We let $\sigma_2$ denote the time at which the first successful type-2 cell arises in the population, which we consider the time of cancer initiation. To simplify the following discussion, we assume that $\beta_1 = \beta_2 =: \beta$.

In \cite{FLR2014}, Foo, Leder and Ryser analyze an approximated version of the above model for the $w=1$ case. They assume that cells occupy a spatial continuum, and that type-1 clones grow deterministically with radial growth rate $c_1(\beta)$. Under this simplified model, the dynamics of cancer initiation are governed by the value of the metaparameter
\begin{align} \label{eq:gammma}
\Gamma := N^3 (u_1\beta)^3 c_w(\beta)^{-2} (u_2\beta)^{-1}.
\end{align}
When $\Gamma$ is small, the first successful cancer cell (type-2) typically arises within the first successful type-1 clone.
As $\Gamma$ increases, it becomes possible for cancer to initiate from one of several successful type-1 clones, and when $\Gamma$ is large, it may even arise from an unsuccessful type-1 clone before it goes extinct (Fig.~\ref{fig:regimes}).
A more detailed discussion of these regimes can be found in \cite{DurFooLed}, \cite{FLR2014} and \cite{foo2020mutation}.

Fortunately, the analysis in \cite{FLR2014} carries over to the more general $w>1$ case, with the assumption that type-1 clones grow deterministically as a union of two-dimensional disks (Fig.~\ref{fig:modeldynamics}b) with radial growth rate $c_w(\beta) = p_w \sqrt{\pi w \beta} \big/ \sqrt{\log(1/\beta)}$.
We begin by considering the metaparameter $\Gamma$.
Note first the asymmetric role of the mutation rates $u_1$ and $u_2$: Increasing $u_1$ increases the likelihood that multiple successful type-1 clones arise prior to cancer initiation (large-$\Gamma$ regime), whereas increasing $u_2$ has the reverse effect (small-$\Gamma$ regime). Note also the asymmetric role of $\beta$ and $w$: As $\beta$ increases, $\Gamma$ increases according to $\beta \log(1/\beta)$ for small $\beta$, while as $w$ increases, $\Gamma$ decreases according to $1/(p_w^2w)$. Both parameters affect how quickly type-1 clones expand, but $\beta$ also affects the success probability of type-1 clones by \eqref{eq:gamblersruin}. Thus, whereas a larger $w$ means faster type-1 clonal expansion and a greater chance that cancer initiates within the early clones (small-$\Gamma$ regime), for larger $\beta$, faster type-1 expansion is counterbalanced by the fact that more successful type-1 clones arise, which turns out to push the dynamics toward several successful type-1 clones (large-$\Gamma$ regime).

We next consider the distribution of $\sigma_2$, the time of cancer initiation. Its density is given by (4) in \cite{FLR2014}, with the substitution $\gamma_2 := \pi w$ (area of stacked unit disks in $\Z^2 \times \Z_w$) and $c_w(\beta) = p_w \sqrt{\pi w \beta} \big/ \sqrt{\log(1/\beta)}$. Predictably, as the tissue thickness $w$ increases, faster type-1 expansion translates into earlier cancer initiation (Fig.~\ref{fig:sigma2}). In Figure \ref{fig:speed_illustration}b of Section \ref{sec_mainstatement}, we noted that premalignant population growth is over three times as fast on $w=5$ layers as on $w=1$ layer, whereas cancer initiation speeds up around twofold over this range according to Figure \ref{fig:sigma2}. To see why, note that the probability of the event $\{\sigma_2 \in dt\}$ depends on the ``total mass'' or ``spacetime volume'' of type-1 particles up until time $t$, i.e.~the time-integral of the size of the type-1 population. Under our deterministic growth assumption, a successful type-1 clone that originates at time 0 grows to size $(c_w(\beta))^2s^2\pi w$ by time $s$, and it reaches spacetime volume $V$ by time
\begin{align*} \label{eq:timetospacetimevol}
t_w(V) = 3^{1/3} (p_w \pi w)^{-2/3} h(\beta)^{1/3} V^{1/3}.    
\end{align*}
Thus, going from $w=1$ to $w > 1$ layers should accelerate cancer initiation by around $(p_w w)^{2/3}$, which is consistent with a twofold increase from $w=1$ to $w=5$. Of course, while these calculations give us some idea of what to expect, the dynamics are more complex in general.
For small $u_1$ or small $\beta$, for example, it may take a long time for the first successful type-1 clone to arise, and for larger values, cancer may originate from one of several successful type-1 clones originating at distinct times.

\begin{figure}
    \centering
    \hspace*{-0.6cm}\includegraphics[scale=1]{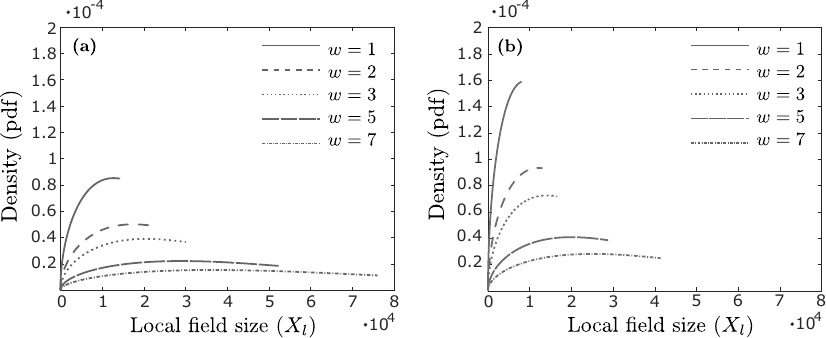}
    \caption{Distribution of the local field size $X_l$, conditioned on $\{\sigma_2 \in dt\}$ with $t = \E[\sigma_2]$, for {\bf (a)} $\beta = 0.01$ and {\bf (b)} $\beta =0.05$. Other parameters are $N = 10^6$, $u_1 = 10^{-6}$ and $u_2 = 10^{-5}$.}
    \label{fig:fieldcancer_beta}
\end{figure}

We finally consider the size $X_l$ of the {\em local field}, i.e.~the premalignant clone from which the first successful cancer cell arises, at the time of cancer initiation. The density of $X_l$, conditioned on the event $\{\sigma_2 \in dt\}$, is given by (8) in \cite{FLR2014}. We focus here on the case $t = \E[\sigma_2]$ when cancer initiates at its expected time. In Figure \ref{fig:fieldcancer_beta}, we show how the conditional distribution of $X_l$ changes with tissue thickness $w$, given fitness advantage $\beta=0.01$ (Fig.~\ref{fig:fieldcancer_beta}a) and $\beta=0.05$ (Fig.~\ref{fig:fieldcancer_beta}b). Since we condition on $\{\sigma_2 \in dt\}$, and type-1 clones are assumed to grow deterministically, the support of $X_l$ is finite and reflects the maximum possible size of a type-1 clone at time $t$, which is $(c_w(\beta))^2t^2 \pi w$.

In Figure \ref{fig:fieldcancer_beta}, we see that as $w$ increases, the local field size $X_l$ increases and varies across a wider range. This reflects the fact that increasing $w$ pushes the dynamics toward the small-$\Gamma$ regime, in which cancer initiates within one or a few large clones. 
When Figures \ref{fig:fieldcancer_beta}a and \ref{fig:fieldcancer_beta}b are compared, we see that increasing $\beta$ results in a smaller local field, and the local field size appears less sensitive to $\beta$ than to $w$. As noted above, increasing $\beta$ both leads to faster expansion of type-1 clones and improved viability of these clones, with counteracting effects on the metaparameter $\Gamma$. 
The fact that $X_l$ decreases with increasing $\beta$ is consistent with $\Gamma$ increasing, moving the dynamics toward a greater number of smaller premalignant fields.

The above discussion reveals how
our main result enables prediction of how
premalignant fields evolve and how they give rise to cancer,
given information on the tissue thickness $w$ and the fitness advantage $\beta$. We have seen how the number of premalignant patches, the time of cancer initiation $\sigma_2$ and the local field size $X_l$ is significantly affected by $w$, and how $w$ and $\beta$ affect the dynamics in distinct ways that would be difficult to anticipate without the aid of a mathematical model. These insights are furthermore clinically relevant, since premalignant fields often appear histologically normal, making them difficult to distinguish from normal tissue. Thus, the capability to make quantitative predictions on the spatial evolutionary history of the tumor can yield valuable insights into e.g.~optimal excision margins under surgery, and into when and where recurrence can be expected to occur following treatment.

\setcounter{lemma}{0}

\section{Proofs of lemmas} \label{sec:proofsoflemmas}

\subsection{Proof of Lemma \ref{lclt}} \label{app:localclt}

\begin{lemma}[LCLT on $\Z^d \times \Z_w$]
  Let $(S_n)_{n \geq 0}$ be the discrete-time SSRW on $\mathbb{Z}^d \times \Z_w$ with $S_0=0$. Set $b_1 :=2$, $b_w := 2$ if $w>1$ is even and $b_w := 1$ if $w>1$ is odd, and define
\begin{align*}
    p_{w,d} := \begin{cases} 1, & w=1, \\ 
    {2d}/({2d+1}), & w=2, \\ {d}/({d+1}), & w>2, \end{cases}
\end{align*}
    as the probability that $(S_n)_{n \geq 0}$ takes a step in the $\mathbb{Z}^d$-direction, 
    with $p_{w,2} = p_w$ as defined in \eqref{eq:birthfirstwodimensions}.
    Then, for any $n \geq 1$ and $x \in \Z^d \times \Z_w$ so that $\P(S_n=x)>0$,
\begin{align*}
    n^{d/2} \P(S_n=x) = (b_w/w) \big(d/(2\pi p_{w,d})\big)^{d/2} + o(1). 
\end{align*}
\end{lemma} 

\begin{proof}
For $w=1$, the SSRW on $\Z^d\times \Z_w$ is equivalent to the SSRW on $\Z^d$, and it suffices to refer to the LCLT on $\Z^d$, given by \eqref{eq:random_walk_Zd} below.
We can therefore assume that $w>1$.

Let $\hat N(n)$ denote the number of steps taken along the first $d$ dimensions of $\Z^d \times \Z_w$
by time $n$. Conditional on $\hat N(n)=k$, we can decompose $S_n$ as $S_n = (\hat S_k, S_{n-k}^w)$, where $(\hat S_j)_{j \geq 0}$ and $(S_j^w)_{j \geq 0}$ are independent SSRWs on $\mathbb{Z}^d$ and $\Z_w$, respectively.
In other words, for any $x \in \mathbb{Z}^d \times \Z_w$ with $x = (\hat x, x_{d+1})$,
\[
\P(S_n=x | \hat N(n)=k) = \P(\hat S_k = \hat x) \P(S_{n-k}^w = x_{d+1}).
\]
Thus, by conditioning on the value of $\hat N(n)$, we can analyze the large-$n$ asymptotics of $S_n$ by analyzing the two random walks $(\hat S_j)_{j \geq 0}$ and $(S_j^w)_{j \geq 0}$ separately. 

We start with the latter walk $(S_j^w)_{j \geq 0}$ on $\mathbb{Z}_w$. Note first that $(S_j^w)_{j \geq 0}$ is aperiodic if $w>1$ is odd, while it is periodic with period 2 if $w>1$ is even. For the $w$ odd case, the transition matrix $P_w$ for $(S_j^w)_{j \geq 0}$ is irreducible and aperiodic, and the uniform distribution on $\Z_w$ is stationary for $P_w$, so by Theorem 4.9 of \cite{levin2017markov}, there exist constants $\gamma_1=\gamma_1(w) \in (0,1)$ and $C_1=C_1(w)>0$ so that
\[
|\P(S_j^w=x_{d+1})-1/w| \leq C_1 \gamma_1^j,
\quad x_{d+1} \in \Z_w.
\]
For the $w$ even case, by rearranging the state space into odds and evens, we can write $P_w^2$ as a block diagonal matrix consisting of two identical irreducible, aperiodic blocks $Q_w$ of dimension $(w/2) \times (w/2)$.
If $S_0^w = 0$ and $x_{d+1}$ is even, we can apply the same theorem as above to get constants $\gamma_2=\gamma_2(w) \in (0,1)$ and $C_2=C_2(w) >0$ so that
\[
|\P(S_{2j}^w=x_{d+1})-2/w| \leq C_2 \gamma_2^{j} = C_2 \big(\gamma_2^{1/2 }\big)^{2j}.
\]
If $x_{d+1}$ is odd, we condition on the first step and obtain as before
\[
|\P(S_{2j+1}^w=x_{d+1})-2/w| \leq C_2 \gamma_2^{j} = \big(C_2\gamma_2^{-1/2}\big) \big(\gamma_2^{1/2}\big)^{2j+1}.
\]
Recall that $b_w = 1$ for $w>1$ odd and $b_w=2$ for $w>1$ even.
Combining the above observations, we see that for the SSRW $(S_j^w)_{j \geq 0}$ on $\mathbb{Z}_w$, there exist $\gamma=\gamma(w) \in (0,1)$ and $C=C(w)>0$ so that for each $x_{d+1} \in \Z_w$ with $\P(S_j^w = x_{d+1})>0$,
\begin{align} \label{eq:formerlemma11}
    |\P(S_j^w = x_{d+1}) - (b_w/w)| \leq C\gamma^j.
\end{align}

We next turn to the random walk $(\hat S_j)_{j \geq 0}$ on $\mathbb{Z}^d$.
For $\hat x \in \mathbb{Z}^d$ and $j \geq 1$, set
\[
p_j(\hat x) := \big(d/(2\pi j)\big)^{d/2} \exp\big(\!-\!d \|\hat x\|^2/(2j)\big),
\]
where $\|\cdot\|$ is the Euclidean norm on $\Z^d$.
By Theorem 2.1.3 of \cite{lawler2010}, there exists $c>0$ so that for all $\hat{x} \in \Z^d$,
\begin{align*}
    |\P(\hat S_j = \hat x)+\P(\hat S_{j+1} = \hat x)-2 p_j(\hat{x})| \leq c/j^{(d+2)/2}.
\end{align*}
Since $(\hat S_j)_{j \geq 0}$ is periodic with period 2, 
we obtain for $j$ and $\hat x$ such that $\P(\hat S_j = \hat x)>0$, 
\begin{align} \label{eq:random_walk_Zd}
j^{d/2} \P(\hat S_j = \hat x) = 2 \big(d/(2\pi)\big)^{d/2} + O(1/j).
\end{align}

We finally establish bounds on $\hat{N}(n)$, the number of steps the SSRW on $\Z^d \times \Z_w$ takes along the first $d$ dimensions by time $n$.
Since $\hat N(n)$ is binomially distributed with success probability $p_{w,d}$, we obtain by Hoeffding's inequality for any $\nu>0$,
\begin{align} \label{eq:hoeffding}
\P(|\hat N(n)-np_{w,d}|>n^{1/2+\nu}) \leq 2 \exp(-2n^{2\nu}).
\end{align}
For each $n$ and each $\nu \in (0,1/2)$, define the neighborhoods
\[
A_n(\nu) := \{1 \leq k \leq n: |k-np_{w,d}| \leq n^{1/2+\nu}\},
\]
and note that by \eqref{eq:hoeffding}, $\hat N(n)$ takes values in $A_n(\nu)$ with high probability for $n$ large. 

We are now ready to carry out the main calculations. Fix $\nu \in (0,1/2)$ and note first that
\begin{align*}
n^{d/2} \P(S_n=x) &= n^{d/2} \P(S_n=x | \hat N(n) \in A_n(\nu)) \P(\hat N(n) \in A_n(\nu)) \\
&\quad + n^{d/2} \P(S_n=x | \hat N(n) \notin A_n(\nu)) \P(\hat N(n) \notin A_n(\nu)).
\end{align*}
Write $x = (\hat x, x_{d+1})$. Since $\P(\hat N(n) \notin A_n(\nu)) \leq 2\exp(-2n^{2\nu})$ by \eqref{eq:hoeffding}, and $n^{d/2} = o(\exp(2n^{2\nu}))$, it suffices to study the large-$n$ asymptotics of
\begin{align*}
& n^{d/2} \P(S_n=x | \hat N(n) \in A_n(\nu)) \\
&= \textstyle n^{d/2} \sum_{k \in A_n(\nu)} \P(\hat S_k = \hat x) \P(S_{{n-k}}^w = x_{d+1}) \P(\hat N(n) = k | \hat N(n) \in A_n(\nu)).
\end{align*}
For $n$ and $x$ so that $\P(S_n=x)>0$, we can assume without loss of generality that $\P(\hat S_k=\hat x)>0$ and $\P(S_{n-k}^w=x_{d+1})>0$ for $k \in A_n(\nu)$ even and $n$ sufficiently large. Define
\[
A_n^2(\nu) := \{k \in A_n(\nu): k \text{ even}\}.
\]
For $k \in A_n^2(\nu)$, we can write $k^{d/2} \P(\hat S_k = \hat x) = 2(d/(2\pi))^{d/2} + O(1/k)$ by \eqref{eq:random_walk_Zd}.
By the definition of $A_n(\nu)$, we furthermore have
\[
(1/p_{w,d}) \cdot \big(1/(1+p_{w,d}^{-1}n^{\nu-1/2})\big) \leq n/k \leq (1/p_{w,d}) \cdot \big(1/(1-p_{w,d}^{-1}n^{\nu-1/2})\big),
\]
which implies that $n/k = 1/p_{w,d} + O(n^{\nu-1/2})$.
Therefore,
\begin{align*}
& n^{d/2} \P(S_n=x | \hat N(n) \in A_n(\nu)) \\
&= \textstyle \sum_{k \in A_n^2(\nu)} (n/k)^{d/2} \cdot k^{d/2} \P(\hat S_k = \hat x) \cdot \P(S_{{n-k}}^w = x_{d+1}) \P(\hat N(n) = k | \hat N(n) \in A_n(\nu)) \\
&= 2(d/(2\pi p_{w,d}))^{d/2} \textstyle \sum_{k \in A_n^2(\nu)} \P(S_{{n-k}}^w = x_{d+1}) \P(\hat N(n) = k | \hat N(n) \in A_n(\nu)) + o(1).
\end{align*}
By \eqref{eq:formerlemma11}, there exist constants $\gamma=\gamma(w) \in (0,1)$ and $C=C(w)>0$ so that for $k \in A_n^2(\nu)$,
\begin{align*}
|\P(S_{{n-k}}^w = x_{d+1})-(b_w/w)| \leq C \gamma^{n(1-p_{w,d})-n^{1/2+\nu}},
\end{align*}
which implies that $\P(S_{{n-k}}^w = x_{d+1}) = b_w/w + O(\gamma^{n(1-p_{w,d})-n^{1/2+\nu}})$.
We thus obtain
\begin{align*}
& n^{d/2} \P(S_n=x | \hat N(n) \in A_n(\nu)) \\
&= \textstyle 2(b_w/w) (d/(2\pi p_{w,d}))^{d/2} \sum_{k \in A_n^2(\nu)} \P(\hat N(n)=k | \hat N(n) \in A_n(\nu)) + o(1).
\end{align*}
The remaining sum is the probability that $\hat N(n)$ is even given that $\hat N(n) \in A_n(\nu)$. Since for $X \sim {\rm Bin}(n,p)$, $\P(X \text{ is even}) = 1/2 + (1/2)  (1-2p)^n$, and the probability that $\hat N(n) \in A_n(\nu)$ converges to 1 as $n \to \infty$, the sum converges to $1/2$ as $n \to \infty$. The result follows.
\end{proof}

\subsection{Proof of Lemma \ref{ret_time_thm}} \label{app:tailasymptotics}

\begin{lemma}[Asymptotic tail of $T_0$]
Let $(Z_t)_{t \geq 0}$ be the SSRW on $\Z^2\times \Z_w$ with jump rate $\alpha> 0$, started at a nearest neighbor of the origin. Set $T_0:=\inf\{t \geq 0:Z_t=0\}$ and define
\begin{align*} 
\mu_w := p_w \pi w =
\begin{cases} \pi, & w=1, \\ 
(4/5){\pi w}, & w=2, \\ (2/3){\pi w}, & w>2, \end{cases}    
\end{align*}
where $p_w$ is the probability given by \eqref{eq:birthfirstwodimensions}. Then
\begin{align*} 
\P(T_0 > t)  \sim \mu_w/\log t, \quad t \to \infty.
\end{align*}
\end{lemma}

\begin{proof}
We begin by proving the result for the embedded discrete-time SSRW $(S_n)_{n \geq 0}$ on $\Z^2 \times \Z_w$, defined by $S_n := Z_{\sigma_n}$, where $\sigma_n$ is the time of the $n$-th jump of $(Z_t)_{t \geq 0}$.
Set $\tau_0 := \min\{n > 0 : S_n = 0\}$.
We want to show that
\begin{align*}
\P(\tau_0 > n) \sim \mu_w/\log n, \quad n \to \infty.
\end{align*}
The $w=1$ case has already been established by 
Dvoretzky and Erd{\"o}s in \cite{dvor51}.
We can easily extend their argument to the $w>1$ case by substituting the LCLT on $\Z^2$ by our LCLT on $\Z^2 \times \Z_w$.
We sketch the argument briefly below.

Note first that instead of assuming that $(S_n)_{n \geq 0}$ is started at a nearest neighbor of the origin, we can assume that $(S_n)_{n \geq 0}$ is started at the origin itself, since for $n \geq 1$, 
\begin{align*}
    \P(\tau_0>n|S_0=0) = \P(\tau_0>n-1|S_0=e_1).
\end{align*}
Define $\gamma_2(n) := \P(\tau_0 > n-1|S_0=0)$ and $u_2(n) := \P(S_n=0|S_0=0)$ using the notation of \cite{dvor51}.
Clearly, $\gamma_2(n)$ in decreasing in $n$ with $\gamma_2(1) = 1$ and $\gamma_2(n)>0$ for all $n \geq 1$.
Assume that $w$ is even, in which case $(S_n)_{n \geq0}$ is periodic with period 2.
Then $u_2(2n-1) = 0$ for $n \geq 1$, and by the discrete-time LCLT of Lemma \ref{lclt},
\begin{align} \label{eq:dvoretzkyestimate}
    \textstyle u_2(2n) = \P(S_{2n}=0|S_0=0) 
    = (1/\mu_w)(1/n)+o(1/n), \quad n \geq 1.
\end{align}
If $S_0=0$, then for any $n \geq 1$, the walk visits the origin at least once by time $n-1$ with probability 1. Decomposing this event in terms of the last return to the origin by time $n-1$, and setting $m := n/2-1$ for even $n$ and $m := (n-1)/2$ for odd $n$, we can write
\begin{align*}
u_2(0)\gamma_2(n) + u_2(2) \gamma_2(n-2) + \cdots + u_2(2m) \gamma_2(n-2m)=1.
\end{align*}
It follows from \eqref{eq:dvoretzkyestimate} that $\sum_{i=0}^{k} u_2(2i) = (1/\mu_w)(\log k) (1+o(1))$.
Using the monotonicity of $\gamma_2$, one can now show that
$\gamma_2(n) = (\mu_w+o(1))/\log n$, see page 356 of \cite{dvor51}.
The argument for odd $w$, in which case the walk is aperiodic, follows along similar lines.

It remains to translate the above discrete-time result into continuous time.
Fix $\varepsilon>0$.
Let $T_0 = \inf\{t \geq 0 : Z_t = 0\}$ be the time of first visit of $(Z_t)_{t \geq 0}$ to the origin, and let $N(t)$ denote the number of jumps $Z_t$ makes by time $t$. Then
\begin{align*}
\P(T_0 > t) & =  \P(\tau_0 > N(t)) 
\leq \P\left(\tau_0 > \alpha t(1-\varepsilon) \right) + \P\left(N(t) < \alpha t(1-\varepsilon) \right),
\end{align*}
and
\begin{align*}
    \P(T_0>t) &= \P(\tau_0 > N(t)) \geq \P\left(\tau_0 > \alpha t(1+\varepsilon) \right) - \P\left( N(t) >  \alpha t(1+\varepsilon) \right).
\end{align*}
Since $(N(t))_{t \geq 0}$ is a Poisson process with rate $\alpha$, we have $N(t)/(\alpha t) \to 1$ almost surely as $t\to\infty$.
The result follows.
\end{proof}

\subsection{Proof of Lemma \ref{lemma:branchingrate}} \label{app:branchingratepruneddual}

\begin{lemma} 
Let $(Z_t^1)_{t \geq 0}$ and $(Z_t^2)_{t \geq 0}$ be independent SSRWs on $\mathbb{Z}^2 \times \Z_w$ with jump rate 1, started at 0 and a nearest neighbor of 0. Set $T_0 := \inf\{t \geq 0: Z_t^1 = Z_t^2\}$, and let $S$ be an exponential random variable with mean $1/\beta$, independent of $(Z_t^1)_{t \geq 0}$ and $(Z_t^2)_{t \geq 0}$.
Then
\begin{enumerate}[(1)]
    \item $\alpha_0(\beta) := \P(T_0 \leq \min\{S,\tau(\beta)\}) \to 1$ as $\beta \to 0$.
    \item $\alpha_1(\beta) :=\P(S \leq \min\{T_0,\tau(\beta)\}) = \Theta\big(1/(\log(1/\beta))^{3/2}\big)$ as $\beta \to 0$,
    \item $\alpha_2(\beta) :=\P(\tau(\beta) \leq \min\{S,T_0\}) \sim \mu_w/\log(1/\beta)$ as $\beta \to 0$.
\end{enumerate}
Furthermore, there exists $C>0$ so that for sufficiently small $\beta$,
\begin{enumerate}[(1)]
    \item[(4)] $\E\big[\sup_{t \leq \tau(\beta)} ||Z_t^1||^j \,\big|\, T_0 \leq \min\{S,\tau(\beta)\}\big] \leq C j! \tau(\beta)^{j/2}$, \quad $j \geq 1$,
    \item[(5)] $\E\big[\sup_{t \leq \tau(\beta)} ||Z_t^1||^j \,\big|\, S \leq \min\{T_0,\tau(\beta)\}\big] \leq C j! (\log(1/\beta))^{1/2}\tau(\beta)^{j/2}$, \quad $j \geq 1$,
    \item[(6)] $\E\big[\sup_{t \leq \tau(\beta)} ||Z_{t}^1||^j \,\big|\, \tau(\beta) \leq \min\{S,T_0\}\big] \leq C j! (\log(1/\beta))^{1/2} \tau(\beta)^{j/2}$, \quad  $j \geq 1$.
\end{enumerate}
Each of (4)-(6) continues to hold if $Z_t^1$ is replaced by $Z_t^2$ or  $\bar{Z_t} = Z_t^1-Z_t^2$.
\end{lemma}

\begin{proof}
Recall that $\tau(\beta) = (1/\beta) \big(1/\sqrt{\log(1/\beta)}\big)$, and define
\begin{align*} 
    a(\beta):= (1/\beta) \big(1/(\log(1/\beta))^{3/2}\big) = o(\tau(\beta)).
\end{align*}
\begin{enumerate}[(1)]
\item Follows from (2) and (3).
\item Since $S$ is independent of $T_0$, we can write
\begin{align*}
\P(S\leq \min\{T_0,\tau(\beta)\}) &=\textstyle \int_0^{\tau(\beta)}\P(s\leq T_0) \cdot \beta e^{-\beta s}ds\\
&=
\textstyle \int_0^{a(\beta)} \P(s \leq T_0) \cdot \beta e^{-\beta s}ds +\int_{a(\beta)}^{\tau(\beta)}\P(s\leq T_0) \cdot \beta e^{-\beta s}ds.
\end{align*}
For the former integral, it is easy to see that
\begin{align*}
    \textstyle \int_0^{a(\beta)} \P(s \leq T_0) \cdot \beta e^{-\beta s}ds &\leq \textstyle\int_0^{a(\beta)} \beta e^{-\beta s}ds \\
    &= 1-e^{-\beta a(\beta)} \sim 1/(\log(1/\beta))^{3/2}, \quad \beta \to 0.
\end{align*}
For the latter integral, we can write
\begin{align}
    &\textstyle \int_{a(\beta)}^{\tau(\beta)}\P(s\leq T_0) \cdot \beta e^{-\beta s}ds 
    \leq (\tau(\beta)-a(\beta)) \cdot \beta e^{-\beta a(\beta)} \cdot \P(a(\beta) \leq T_0).
    \label{eq:log3/2}
\end{align}
Now, $(\tau(\beta)-a(\beta)) \sim \tau(\beta) = (1/\beta) \big(1/\sqrt{\log(1/\beta)}\big)$ as $\beta \to 0$, $e^{-\beta a(\beta)} \to 1$ as $\beta \to 0$, and by Lemma \ref{ret_time_thm}, $\P(a(\beta) \leq T_0) \sim \mu_w/\log(1/\beta)$ as $\beta \to 0$.
The right-hand side of \eqref{eq:log3/2} is therefore of order $1/(\log(1/\beta))^{3/2}$ as $\beta \to 0$. Thus,
\[
\P\big(S \leq \min\{T_0,\tau(\beta)\}\big) = O\big(1/(\log(1/\beta))^{3/2}\big).
\]
On the other hand, by independence,
\begin{align*}
\P(S \leq \min\{T_0, \tau(\beta)\}) &\geq \P(S \leq \tau(\beta) \leq T_0) \\
&= \P(S \leq \tau(\beta)) \cdot \P(\tau(\beta) \leq T_0) \\
&\sim \mu_w/(\log(1/\beta))^{3/2}, \quad \beta \to 0,
\end{align*}
since $\P(S \leq \tau(\beta)) = 1-e^{-\beta \tau(\beta)} \sim 1/\sqrt{\log(1/\beta)}$ as $\beta \to 0$, and $\P(\tau(\beta) \leq T_0) \sim \mu_w/\log(1/\beta)$ as $\beta \to 0$ by Lemma \ref{ret_time_thm}. Therefore,
\[
\P(S \leq \min\{T_0,\tau(\beta)\}) = \Omega\big(1/(\log(1/\beta))^{3/2}\big).
\]
Since $\P(S \leq \min\{T_0,\tau(\beta)\})$ is both $O\big(1/(\log(1/\beta))^{3/2}\big)$ and $\Omega\big(1/(\log(1/\beta))^{3/2}\big)$ as $\beta \to 0$, we have established part (2).
\item By independence,
\begin{align*}
    \P(\tau(\beta) \leq \min\{S,T_0\}) &= \P(\tau(\beta) \leq S) \cdot \P(\tau(\beta) \leq T_0) \\
    &= e^{-\beta \tau(\beta)} \cdot \P(\tau(\beta) \leq T_0) \\
    &\sim \mu_w/\log(1/\beta), \quad \beta \to 0,
\end{align*}
which follows from the fact that $\exp(-\beta \tau(\beta)) \to 1$ as $\beta \to 0$, and that $\P(\tau(\beta) \leq T_0) \sim \mu_w/\log(1/\beta)$ as $\beta \to 0$ by Lemma \ref{ret_time_thm}.
\item  Note that we can write $Z_t^1 = (Z_{1,t}^1,Z_{2,t}^1,Z_{3,t}^1)$, where $Z_{1,t}^1$ and $Z_{2,t}^1$ are SSRWs on $\mathbb{Z}$ with jump rate $p_w/2$ each, and $Z_{3,t}^1$ is the SSRW on $\Z_w$ with jump rate $1-p_w$, where $p_w$ is defined as in \eqref{eq:birthfirstwodimensions}.
All walks are started at 0.
By part (1) above, we have $\P(T_0 \leq \min\{S,\tau(\beta)\}) \geq 1/2$ for sufficiently small $\beta$, which implies that for any $x>0$,
\begin{align} \label{eq:removeconditionongprob1}
    &\textstyle \P\big(\tau(\beta)^{-1/2} \sup_{t \leq \tau(\beta)}||Z_{t}^1||>x \,\big|\, T_0 \leq \min\{S,\tau(\beta)\}\big) \nonumber \\
    &\leq \textstyle 2\,\P\big(\tau(\beta)^{-1/2} \sup_{t \leq \tau(\beta)}||Z_{t}^1||>x\big).
\end{align}
Then note that for any $x>0$,
\begin{align} \label{eq:lemma6intermediate22}
& \textstyle \P\big(\tau(\beta)^{-1/2} \sup_{t \leq \tau(\beta)}||Z_{t}^1||>x\big) \leq \textstyle 4 \, \P\big(\tau(\beta)^{-1/2} \sup_{t \leq \tau(\beta)}Z_{1,t}^1>x/2\big),
\end{align}
since $Z_{1,t}^1$ and $Z_{2,t}^1$ have the same distribution.
Now, $Z_{1,t}^1$ takes steps $\pm 1$ with equal probability at rate $p_w/2$. The steps have moment generating function
    \[
    \phi(\theta) = ({e^{\theta}+e^{-\theta}})/2,
    \]
    so $Z_{1,t}^1$ has moment generating function
    \begin{align} \label{eq:momgenrwonedim}
     \psi_t(\theta) &= \E\big[\exp(\theta Z_{1,t}^1)\big] \nonumber \\
     &= 
     \textstyle\sum_{n=0}^\infty e^{-(p_w/2)t}  \big(((p_w/2) t)^n/{n!}\big)  \cdot \phi(\theta)^n \nonumber \\
     &= \exp\big((p_w/2)t \cdot (\phi(\theta)-1)\big). 
        \end{align}
        For any $x>0$ and $\theta>0$, we thus obtain by Doob's inequality,
        \begin{align*} \label{eq:lemma6intermediate33}
            &\textstyle \P(\tau(\beta)^{-1/2} \sup_{t \leq \tau(\beta)} Z_{1,t}^1>x/2) \nonumber \\
            &= \textstyle \P\big(\sup_{t \leq \tau(\beta)} \exp(\theta \tau(\beta)^{-1/2} Z_{1,t}^1) > \exp(\theta x/2)\big) \nonumber \\
            &\leq \exp\!\big((p_w/2) \tau(\beta) \cdot (\phi(\theta \tau(\beta)^{-1/2})-1)\big)  \cdot \exp(-\theta x/2).
        \end{align*}
        Since $\phi(\theta\tau(\beta)^{-1/2}) = 1 + (1/2)\theta^2\tau(\beta)^{-1} + \cdots$, we can find $C_1=C_1(\theta)>0$ so that for sufficiently small $\beta$,
        \[
        (p_w/2)\tau(\beta) \cdot (\phi(\theta\tau(\beta)^{-1/2})-1)  \leq \log C_1.
        \]
        If we take $\theta=2$, we then obtain for sufficiently small $\beta$,
        \begin{align} \label{eq:lemma6intermediate44}
            &\textstyle \P(\tau(\beta)^{-1/2} \sup_{t \leq \tau(\beta)} Z_{1,t}^1>x/2) \leq C_1 \exp(-x).
        \end{align}
       Now fix $j \geq 1$ and write, by \eqref{eq:removeconditionongprob1}, \eqref{eq:lemma6intermediate22} and \eqref{eq:lemma6intermediate44},
        \begin{align*}
            &\textstyle\tau(\beta)^{-j/2} \E\big[\sup_{t \leq \tau(\beta)} ||Z_{t}^1||^j\,\big|\, T_0 \leq \min\{S,\tau(\beta)\}\big] \\
            &= \textstyle\int_0^\infty \P\big(\tau(\beta)^{-1/2}\sup_{t \leq \tau(\beta)} ||Z_{t}^1||>x^{1/j} \,\big|\, T_0 \leq \min\{S,\tau(\beta)\}\big) dx \\
             &\leq C_2 \textstyle \int_0^\infty \exp\big(\!-x^{1/j}\big) dx,
        \end{align*}
        where $C_2 := 8C_1>0$.
        Using the substitution $u= x^{1/j}$, $x = u^j$, $dx = j u^{j-1} du$, we obtain
        \begin{align*}
            &\textstyle \tau(\beta)^{-j/2} \E\big[\sup_{t \leq \tau(\beta)} ||Z_{t}^1||^j\,\big|\, T_0 \leq \min\{S,\tau(\beta)\}\big]\nonumber   \leq \textstyle C_2 j \int_0^\infty e^{-u} u^{j-1} du = C_2 j!,
        \end{align*}
        since $\int_0^\infty e^{-u} u^{j-1} du = (j-1)!$. The result follows.
\item We begin by noting that for any $x>0$, by independence and Cauchy-Schwarz,
    \begin{align*}
        &\textstyle\P\big(\tau(\beta)^{-1/2} \sup_{t \leq \tau(\beta)} ||Z_{t}^1||>x, S \leq \min\{T_0,\tau(\beta)\}\big) \nonumber \\
        &=\textstyle \int_0^{\tau(\beta)} \P\big(\tau(\beta)^{-1/2} \sup_{t \leq \tau(\beta)}||Z_{t}^1||>x, s \leq T_0\big) \cdot \beta e^{-\beta s}ds \nonumber \\
        &\leq \textstyle \sqrt{\P\big(\tau(\beta)^{-1/2} \sup_{t \leq \tau(\beta)}||Z_{t}^1||>x\big)} \cdot \int_0^{\tau(\beta)} \sqrt{\P(s \leq T_0)} \cdot \beta e^{-\beta s} ds.
    \end{align*}
    By the same analysis as in part (2) above, the integral is $O\big(1/\log(1/\beta)\big)$. Since
    \[
    \P(S \leq \min\{T_0,\tau(\beta)\}) = \Theta\big(1/(\log(1/\beta))^{3/2}\big)
    \]
    by part (2), we obtain for some $C_3>0$ and sufficiently small $\beta$,
        \begin{align*}
        &\textstyle\P\big(\tau(\beta)^{-1/2} \sup_{t \leq \tau(\beta)}||Z_{t}^1||>x \,\big|\, S \leq \min\{T_0,\tau(\beta)\}\big) \nonumber \\
        &\leq \textstyle C_3 (\log(1/\beta))^{1/2} \sqrt{\P\big(\tau(\beta)^{-1/2} \sup_{t \leq \tau(\beta)}||Z_{t}^1||>x\big)}.
    \end{align*}
        The same argument as in part (4) above now yields the desired result. The only modification is that we take $\theta=4$ instead of $\theta=2$ in the calculations due to the square root.
\item We begin by writing for any $x>0$, by Cauchy-Schwarz,
        \begin{align*} 
        &\textstyle\P\big(\tau(\beta)^{-1/2} \sup_{t \leq \tau(\beta)}||Z_{t}^1||>x, \tau(\beta) \leq \min\{S,T_0\}\big) \nonumber \\
        &\leq \textstyle  \sqrt{\P\big(\tau(\beta)^{-1/2} \sup_{t \leq \tau(\beta)}||Z_{t}^1||>x\big)} \cdot \sqrt{\P(\tau(\beta) \leq \min\{S,T_0\})},
    \end{align*}
    and the same argument as in part (5) yields the desired result, the only difference being that we appeal to the result of part (3) instead of part (2). \qedhere
\end{enumerate}
\end{proof}

\subsection{Proof of Lemma \ref{lemma:approximateupperbound}} \label{app:pruneddualapproxdual}

\begin{lemma}
Fix $a>0$ and $0 < \rho < 1$, and define $A_r := [r,\infty) \times \R \times \Z_w$.
For each $\delta>0$, there exist $M>0$ and $\beta_0>0$ so that
\begin{align*}
     &\textstyle \P\big(\tilde\zeta_{t}^{\beta,0} \cap A_{b+at} \neq \varnothing\big) \leq 4 \, \P\big(\mathring\phi_{t}^{\beta,0} \cap A_{b+\rho at} \neq \varnothing \big) + M e^{-\delta t}, \quad \beta \leq \beta_0, \; t>0, \; b \in \R.
\end{align*}
\end{lemma}

\begin{proof}
To avoid notational overload, we suppress the initial conditions of the processes used in the proof, and assume that each is started with a single particle at the origin.

As in Section \ref{sec:unmodifiedbrw}, let $\mathring\phi_t^{(k)}$ for $k=0,1,2$ be the subprocess of the unaltered BRW $\bar\phi_t$ obtained by gathering offsprings of particles in the pruned BRW $\mathring\phi_t$ that have just been born through a type-$k$ branching event and whose decision period has not yet passed. Recall that by \eqref{eq:dualprunedbrwupper},
\begin{align*} 
\tilde\zeta_t \subseteq \mathring\phi_t \cup \mathring\phi_t^{(0)} \cup \mathring\phi_t^{(1)} \cup \mathring\phi_t^{(2)}.
\end{align*}
Since $0 < \rho < 1$, it follows that
\begin{align} \label{eq:originalstatement}
    \textstyle \P\big(\tilde\zeta_t^\beta \cap A_{b+at} \neq \varnothing\big) 
    &\leq \textstyle\P\big(\mathring\phi_t^\beta \cap A_{b+\rho at} \neq \varnothing\big) + \sum_{k=0}^2 \P\big(\mathring\phi_t^{(k),\beta} \cap A_{b+at} \neq \varnothing\big).
\end{align}
To analyze the terms in the sum, consider $\P\big(\mathring\phi_t^{(k),\beta} \cap A_{b+at} \neq \varnothing\big)$ for $k=1$. We begin by writing
\begin{align}
&\P\big(\mathring\phi_t^{(1),\beta} \cap A_{b+at} \neq \varnothing\big) \nonumber \\
&\leq \P\big(\mathring\phi_t^{\beta} \cap A_{b+\rho at} \neq \varnothing\big) + \P\big(\mathring\phi_t^{(1),\beta} \cap A_{b+at} \neq \varnothing, \mathring\phi_t^{\beta} \cap A_{b+\rho at} = \varnothing\big). \label{eq:lemma7intermediate1}
\end{align}
Enumerate the particles in $\mathring \phi_t^{(1),\beta}$ as $X^{\beta,1}_t,X^{\beta,2}_t,\ldots,X_{t}^{\beta,N_t^{(1),\beta}}$, where $N_t^{(1),\beta}$ is the number of particles in $\mathring \phi_t^{(1),\beta}$.
For each $X_t^{\beta,i}$, let $Y_t^{\beta,i}$ denote the position of its parent particle
in $\mathring \phi_t^\beta$.
Then
\begin{align} \label{eq:sumofparticles}
 \textstyle &\P\big(\mathring\phi_t^{(1),\beta} \cap A_{b+at} \neq \varnothing, \mathring\phi_t^{\beta} \cap A_{b+\rho at} = \varnothing\big) &\textstyle \nonumber \\
 &\leq \textstyle \P\big(\bigcup_{i=1}^{N_t^{(1),\beta}} \{||X_t^{\beta,i}-Y_t^{\beta,i}|| > (1-\rho)at\} \big)  \nonumber \\
 &\leq \textstyle\sum_{i=1}^\infty \P\big(||X_t^{\beta,i}-Y_t^{\beta,i}|| > (1-\rho)at \,\big|\, i \leq N_t^{(1),\beta}\big) \P\big(i \leq N_t^{(1),\beta}\big).
\end{align}
Let $(Z_s^1)_{s \geq 0}$ and $(Z_s^2)_{s \geq 0}$ be independent SSRWs on $\Z^2 \times \Z_w$, started at 0 and a randomly chosen neighbor of 0, let $T_0$ be the time at which they first meet, and let $S$ be an exponential random variable with mean $1/\beta$, independent of $(Z_s^1)_{s \geq 0}$ and $(Z_s^2)_{s \geq 0}$.
Since each particle in $\mathring\phi_t^{(1)}$ has existed for at most $\tau(\beta)$ time units, and each particle is conditioned on producing its own offspring quickly,
we can write for any $i \geq 1$,
\begin{align} \label{eq:distributionstatement}
    &\textstyle \P\big(||X_t^{\beta,i}-Y_t^{\beta,i}|| > (1-\rho)at \,\big|\, i \leq N_t^{(1),\beta}\big) \nonumber \\
    &\leq \textstyle \P\big(h(\beta)^{-1/2} \sup_{s \leq \tau(\beta)} \|Z_s^1-Z_s^2\|> (1-\rho)at\,\big|\,  S \leq \min\{T_0,\tau(\beta)\}\big).
\end{align}
For ease of notation, let $(\tilde Z_s)_{s \geq 0}$ denote a walk on $\Z^2 \times \Z_w$ with the distribution of $(Z_s^1-Z_s^2)_{s \geq 0}$ conditional on $\{S \leq \min\{T_0,\tau(\beta)\}\}$.
Write $\tilde Z_s = (\tilde Z_{1,s}, \tilde Z_{2,s}, \tilde Z_{3,s})$, and 
note that
\begin{align} \label{eq:4upperbound}
    &\textstyle \P\big(h(\beta)^{-1/2} \sup_{s \leq \tau(\beta)} \|\tilde Z_s\|> (1-\rho)at\big) \nonumber \\
    &\leq \textstyle 4 \, \P\big(h(\beta)^{-1/2} \sup_{s \leq \tau(\beta)} \tilde Z_{1,s} > (1/2)(1-\rho)at\big).
\end{align}
We now consider the moment generating function
\begin{align*} 
\textstyle \E\big[\exp\big(\nu h(\beta)^{-1/2} \tilde Z_{1,\tau(\beta)} \big) \big].
\end{align*}
By part (5) in Lemma \ref{lemma:branchingrate}, there exists $C>0$ so that for sufficiently small $\beta$,
\begin{align*}
    \textstyle \E|\tilde Z_{1,\tau(\beta)}|^j \leq C j! (\log(1/\beta))^{1/2} \tau(\beta)^{j/2}, \quad j \geq 1.
\end{align*}
We therefore obtain, with $\tau(\beta) = (1/\beta) \big(1/\sqrt{\log(1/\beta)}\big)$ and $h(\beta) = (1/\beta) \cdot \log(1/\beta)$,
\begin{align*}
    h(\beta)^{-j} \E|\tilde Z_{1,\tau(\beta)}|^{2j} \leq C (2j)!(\log(1/\beta))^{(1-3j)/2} \leq C (2j)! (\log(1/\beta))^{-j}, \quad j \geq 1,
\end{align*}
where we use that $(1-3j)/2 \leq -j$ whenever $j \geq 1$. Using that $\tilde Z_{1,\tau(\beta)} \stackrel{d}{=} -\tilde Z_{1,\tau(\beta)}$ by symmetry, we obtain for sufficiently small $\beta$,
\begin{align} \label{eq:mgfperturbation}
    \textstyle \E\big[\exp\big(\nu h(\beta)^{-1/2} \tilde Z_{1,\tau(\beta)}\big)\big] &= \textstyle \sum_{j=0}^{\infty} \big(1/(2j)!\big) \cdot \nu^{2j} h(\beta)^{-j} \E|\tilde Z_{1,\tau(\beta)}|^{2j} \nonumber \\
    &\leq 1+\textstyle C\sum_{j=1}^\infty \big(1/(2j)!\big) \cdot \nu^{2j} (2j)! (\log(1/\beta))^{-j} \nonumber \\
    &= 1 + O_{C,\nu}\big((\log(1/\beta))^{-1}\big), 
\end{align}
where $O_{C,\nu}$ signifies that the error term depends on $C$ and $\nu$. 
By Doob's inequality,
\begin{align*}
&\textstyle \P\big(h(\beta)^{-1/2} \sup_{s \leq \tau(\beta)} \tilde Z_{1,s} > (1/2)(1-\rho)at\big) \\
&\leq  \exp\big(\!-\!(1/2)(1-\rho) \nu a t\big) \cdot \E\big[\exp\big(\nu h(\beta)^{-1/2} \tilde Z_{1,\tau(\beta)}\big)\big], \quad t > 0.
\end{align*}
By first choosing $\nu$ sufficiently large and then $\beta$ sufficiently small, we can for any $\delta>0$ find a constant $M_1>0$ so that for sufficiently small $\beta$,
\begin{align*}
&\textstyle \P\big(h(\beta)^{-1/2} \sup_{s \leq \tau(\beta)} \tilde Z_{1,s} > (1/2)(1-\rho)at\big) \leq M_1 e^{-(\delta+2\mu_w) t}, \quad t > 0.
\end{align*}
It then follows from \eqref{eq:sumofparticles}, \eqref{eq:distributionstatement} and \eqref{eq:4upperbound} that for $M_2 := 4M_1$ and sufficiently small $\beta$,
\begin{align*}
    \P\big(\mathring\phi_t^{(1),\beta} \cap A_{b+at} \neq \varnothing, \mathring\phi_t^{\beta} \cap A_{b+\rho at} = \varnothing\big) &\textstyle\leq M_2 e^{-(\delta+2\mu_w)t} \cdot \sum_{i=1}^\infty \P\big(i \leq N_t^{(1),\beta}\big) \\
    &= M_2 e^{-(\delta+2\mu_w)t} \cdot \E\Big[N_t^{(1),\beta}\Big], \quad t>0.
\end{align*}
Now, $\E\Big[N_t^{(1),\beta}\Big] = \E\big|\mathring\phi_t^{(1),\beta}\big| \leq \E\big|\mathring\phi_t^{\beta}\big| \leq \E\big|\phi_t^\beta\big|$, where $\phi_t^\beta$ denotes the simple BRW defined in Section \ref{sec:markovianupper}.
By Lemma \ref{lemma:branchingrate}, $\phi_t^\beta$ branches at exponential rate $\gamma(\beta) = \mu_w+o(1)$, with mean number of new particles $\ell(\beta) = 1+o(1)$ introduced per branching event.
Since $\gamma(\beta) \cdot \ell(\beta) \leq 2\mu_w$ for sufficiently small $\beta$, we can write 
\[
\E\Big[N_t^{(1),\beta}\Big] \leq \E\big|\phi_t^\beta\big| \leq \exp(2\mu_wt)
\]
for sufficiently small $\beta$ and any $t>0$.
We thus obtain for sufficiently small $\beta$,
\begin{align*}
    \P\big(\mathring\phi_t^{(1),\beta} \cap A_{b+at} \neq \varnothing, \mathring\phi_t^{\beta} \cap A_{b+\rho at} = \varnothing\big) &\textstyle\leq M_2 e^{-\delta t}, \quad t>0.
\end{align*}
The same bound holds for $k=0$ and $k=2$ by parts (4) and (6) of Lemma \ref{lemma:branchingrate}.
By setting $M := 3M_2$, the desired result then follows from \eqref{eq:originalstatement} and \eqref{eq:lemma7intermediate1}.
\end{proof}

\subsection{Proof of Lemma \ref{lemma:approximateupperbound2}} \label{app:brwapproxbrw_upper_2}

\begin{lemma} 
For a particle chosen uniformly at random from $\phi_t^{\beta,0}$, let $(Y_s^\beta)_{s \leq t}$ be the path followed by this particle and its ancestors, and let $(\mathring Y_s^\beta)_{s \leq t}$ be the corresponding path in $\mathring \phi_t^{\beta,0}$. Then, for any $r>0$ and $\delta>0$, there exist
$M>0$ and 
$\beta_0>0$ so that
\begin{align*}
     &\textstyle \P\big(\sup_{s \leq t} ||Y_s^\beta-\mathring Y_s^\beta||>r t\big) \leq M e^{-\delta t}, \quad \beta \leq \beta_0, \; t>0.
\end{align*}
\end{lemma}

\begin{proof}
As in the proof of Lemma \ref{lemma:approximateupperbound}, we suppress the initial conditions of the processes from the notation, and assume that each is started with a single particle at the origin.

Consider the pruned BRW $(\mathring\phi_s)_{s \geq 0}$ (in the unscaled spacetime). Type-1 and type-2 branching events occur at total rate $\sim \mu_w/h(\beta)$ as $\beta \to 0$ by Lemma \ref{lemma:branchingrate}, where we recall that $h(\beta) = (1/\beta) \cdot \log(1/\beta)$. In the simple BRW $(\phi_s)_{s \geq 0}$, we modify the path of parent and daughter at each type-1 and type-2 event as outlined in Section \ref{sec:markovianupper}. Type-0 branching events, where the daughter is not introduced to $(\mathring\phi_s)_{s \geq 0}$, occur at rate $\sim \beta$ as $\beta \to 0$ by Lemma \ref{lemma:branchingrate}. In the simple BRW $(\phi_s)_{s \geq 0}$, we modify the parent's path at a type-0 event with probability $1-\alpha_0(\beta) \sim \mu_w/\log(1/\beta)$ as $\beta \to 0$, so the modification rate for type-0 events is $\sim \mu_w/h(\beta)$. The total rate of modifications for type-0, type-1 and type-2 events is therefore $\sim 2\mu_w/h(\beta)$ as $\beta \to 0$, which translates into $\sim 2\mu_w$ in the scaled spacetime.

Consider now a type-1 branching event in $(\mathring\phi_s)_{s \geq 0}$.
Assume for simplicity that the parent is at the origin at the time of branching.
Let $(Z_t^1)_{t \geq 0}$ and $(Z_t^2)_{t \geq 0}$ be the SSRWs followed by parent and daughter from the time of branching, let $T_0$ be the time at which they first meet, and let $S$ be the time at which the daughter first branches. Note that the two paths are conditioned on $\{S \leq \min\{T_0,\tau(\beta)\}\}$. Next, let $(Z_t^3)_{t \geq 0}$ and $(Z_t^4)_{t \geq 0}$ be independent SSRWs with jump rate 1 that are independent of $(Z_t^1)_{t \geq 0}$, $(Z_t^2)_{t \geq 0}$ and $S$, both started at the origin. Define
\[
\hat{Z}_t^1 := \begin{cases} Z_t^3, & t \leq S, \\ Z_S^3+(Z_t^1-Z_S^1), & t \geq S, \end{cases}
\]
and
\[
\hat{Z}_t^2 := \begin{cases} Z_t^4, & t \leq S, \\ Z_S^4+(Z_t^2-Z_S^2), & t \geq S. \end{cases}
\]
In the simple BRW $(\phi_s)_{s \geq 0}$, we replace $(Z_t^1)_{t \geq 0}$ and $(Z_t^2)_{t \geq 0}$ by $(\hat Z_t^1)_{t \geq 0}$ and $(\hat Z_t^2)_{t \geq 0}$, respectively.
Note that in the definition of $(\hat Z_t^1)_{t \geq 0}$, we connect the paths $(Z_t^3)_{t \leq S}$ and $(Z_t^1)_{t \geq S}$ at time $S$ (see Figure \ref{fig:markovian_brw} in the main text), which requires perturbing $(Z_t^1)_{t \geq S}$ by $Z_S^3-Z_S^1$ conditional on $\{S \leq \min\{T_0,\tau(\beta)\}\}$. Call this perturbation $X$.
By the same argument as used to establish part (5) in Lemma \ref{lemma:branchingrate},
there exists $C>0$ so that for sufficiently small $\beta$,
\[
\E||X||^j = \E\big[||Z_{S}^3-Z_{S}^1||^j \,\big|\, S \leq \min\{T_0,\tau(\beta)\}\big] \leq C j! (\log(1/\beta))^{1/2} \tau(\beta)^{j/2}, \quad j \geq 1.
\]
The perturbation of $(Z_t^2)_{t \geq S}$ in the definition of $(\hat Z_t^2)_{t \geq 0}$ has the same upper bound by Lemma \ref{lemma:branchingrate}.
Furthermore, the same argument, using part (6) of Lemma \ref{lemma:branchingrate} instead of part (5), will show that this upper bound also applies to perturbations on type-2 branching events.

On type-0 branching events, $(Z_t^1)_{t \geq 0}$ and $(Z_t^2)_{t \geq 0}$ will be conditioned on $\{T_0 \leq \min\{S,\tau(\beta)\}\}$.
Here, $(Z_t^3)_{t \geq 0}$ and $(Z_t^4)_{t \geq 0}$ will be conditioned on $\{S' \leq \min\{T_0',\tau(\beta)\}\}$ or $\{\tau(\beta) \leq \min\{S',T_0'\}\}$, with $S'$ and $T_0'$ defined in terms of $(Z_t^3)_{t \geq 0}$ and $(Z_t^4)_{t \geq 0}$. 
Let $X$ be the perturbation required to connect $(Z_t^3)_{t \leq \tau(\beta)}$ and $(Z_t^1)_{t \geq \tau(\beta)}$ at time $\tau(\beta)$ (recall that we only modify the parent's path on a type-0 branching event).
By the same argument as used to establish parts (4)-(6) in Lemma \ref{lemma:branchingrate}, we can show that
\begin{align} \label{eq:perturbationbound}
    \E||X||^j \leq C j! (\log(1/\beta))^{1/2} \tau(\beta)^{j/2}, \quad j \geq 1,
\end{align}
the same as for the type-1 and type-2 branching events.

Consider next the scaled processes $(\mathring \phi_s^\beta)_{s \geq 0}$ and $(\phi_s^\beta)_{s \geq 0}$.
We have already observed that each particle path in $(\mathring\phi_s^\beta)_{s \geq 0}$ is perturbed at total rate $\sim 2\mu_w$ as $\beta \to 0$ to produce the corresponding path in $(\phi_s^\beta)_{s \geq 0}$.
Some of the perturbations occur on type-0 branching events, in which case no new particle is introduced, while the remaining perturbations occur on type-1 and type-2 branching events, in which case one new particle is introduced.
To keep track of the perturbations, we define a branching process embedded in $(\mathring \phi_s^\beta)_{s \geq 0}$ as follows:
\begin{itemize}
    \item Each individual in the branching process is associated with a particle in $(\mathring \phi_s^\beta)_{s \geq 0}$.
    \item At each type-0 perturbation in $(\mathring \phi_s^\beta)_{s \geq 0}$, the corresponding individual in the branching process is killed and replaced by another individual.
    \item At each type-1 or type-2 perturbation in $(\mathring \phi_s^\beta)_{s \geq 0}$, the corresponding individual in the branching process is killed and replaced by two individuals.
\end{itemize}
For a given particle path in $(\mathring \phi_s^\beta)_{s \geq 0}$, the number of perturbations up until time $t$ is the generation number at time $t$ of the corresponding individual in the branching process.
The branching process has branching rate $\sim 2\mu_w$ and mean number of offspring $\sim 3/2$ per branching event as $\beta \to 0$.
To obtain an upper bound on the number of perturbations by time $t$, it therefore suffices to establish the following lemma.

\begin{lemmalemma} \label{lemma:generationnumber}
Let $(\varphi_s)_{s \geq 0}$ be a continuous-time branching process with exponential branching rate $\alpha>0$ and offspring distribution $(p_j)_{j=0}^\infty$ with $p_0=0$, started with a single individual.
Let $G_t$ be the generation number of an individual selected uniformly at random from $\varphi_t$.
Set $m := \sum_j j p_j$ and assume $1 < m \leq m_0$. Then, for any $c>m_0$,
\[
\P(G_t>c\alpha t) \leq \exp\big(\alpha t \cdot (2c(m_0-1)-(c-m_0)^2)/(2c)\big), \quad t>0.
\]
\end{lemmalemma}

\begin{proof}
Let $\varphi_{j,t}$ be the number of individuals
at time $t$ that are in generation $j$.
Then
\begin{align*} 
    \textstyle \P(G_t>c\alpha t) &= \textstyle  \E\big[(1/{\varphi_t}) \cdot \sum_{j > c\alpha t} \varphi_{j,t}\big] \leq \sum_{j > c\alpha t} \E\big[\varphi_{j,t}\big],
\end{align*}
where we use that $\varphi_t \geq 1$ since $p_0=0$. Note that $\E\big[\varphi_{j,t}\big] = m^j \P(R_t = j)$, where $R_t$ is Poisson distributed with mean $\alpha t$.
Let $\hat{R}_t$ be Poisson distributed with mean $m_0\alpha t$.
Then
\begin{align} \label{eq:tailprob}
     \textstyle \P(G_t>c\alpha t) \leq 
     \textstyle\sum_{j > c\alpha t} m_0^j \, \P(R_t = j) 
     = \textstyle \exp\big(\alpha t (m_0-1)\big) \cdot \P(\hat{R}_t > c\alpha t), \quad t>0.
\end{align}
Note next that by Theorem 1 of \cite{Canonne2017}, for $c>m_0$,
\begin{align*}
    \P(\hat{R}_t > c\alpha t) 
    &\leq \exp\big(\!-\!\alpha t \cdot (c-m_0)^2/(2c)\big), \quad t>0.
\end{align*}
The result then follows from \eqref{eq:tailprob}.
\end{proof}

We are now ready to begin the main calculations.
First, select a particle uniformly at random from $\phi_t^\beta$.
Let $(Y_s^\beta)_{s \leq t}$ be the path followed by this particle and its ancestors, and let $(\mathring Y_s^\beta)_{s \leq t}$ be the corresponding path in $\mathring \phi_t^\beta$.
The number of perturbations $G_t^\beta$ between the two paths by time $t$ is the generation number of a particle selected uniformly at random from the embedded branching process.
By the previous lemma, for a given $\delta>0$, we can select $c>0$ sufficiently large so that for sufficiently small $\beta$,
\begin{align*}
    & \P(G_t^{\beta} > ct) \leq \exp(-\delta t), \quad t>0.
\end{align*}
This implies
\begin{align} \label{eq:conditionalprob1}
&\textstyle\P\big(\sup_{s \leq t} ||Y_s^\beta-\mathring Y_s^\beta||>r t\big) \nonumber \\
&\leq \textstyle\P\big(\sup_{s \leq t} ||Y_s^\beta-\mathring Y_s^\beta||>r t, G_t^{\beta} \leq ct\big) + e^{-\delta t}, \quad t>0.
\end{align}
To analyze the probability in \eqref{eq:conditionalprob1}, note that by the above observations, we can write
\begin{align*}
\textstyle Y_t^{\beta} = \mathring Y_t^{\beta} + \sum_{k=1}^{G_t^{\beta}} X_k^{\beta},
\end{align*}
where $X_1^{\beta},X_2^{\beta},\ldots$ is the (independent) sequence of scaled perturbations,
and we assume for simplicity that the time point $t$ does not occur during a decision period. It follows that
\begin{align} \label{eq:hardpart}
&\textstyle\textstyle\P\big(\sup_{s \leq t} ||Y_s^\beta-\mathring Y_s^\beta||>r t, G_t^{\beta} \leq ct\big) 
    \leq \textstyle \textstyle \P\big(\sup_{m \leq ct} \big\|\sum_{k=1}^{m} X_k^{\beta}\big\|> rt\big).
\end{align}
To estimate this probability, we begin by noting that 
\begin{align} \label{eq:goingtoonedim}
& \textstyle \textstyle \P\big(\sup_{m \leq ct} \big\|\sum_{k=1}^{m} X_k^{\beta}\big\|> rt\big) \nonumber \\
&= \textstyle \textstyle \P\big(\sup_{m \leq ct} h(\beta)^{-1/2} \big\|\sum_{k=1}^{m} X_k\big\|> rt\big) \nonumber \\
&\leq \textstyle \textstyle 4\,\P\big( \sup_{m \leq ct} h(\beta)^{-1/2} \sum_{k=1}^{m} X_{1,k} > (1/2)  rt\big),
\end{align}
where we write $X_k = (X_{1,k},X_{2,k},X_{3,k})$.
We next consider the moment generating function
\begin{align*} \label{eq:mgf}
\textstyle \E\big[\exp\big(\nu h(\beta)^{-1/2} \sum_{k=1}^{\lfloor ct\rfloor} X_{1,k}\big)\big] &= \textstyle \prod_{k=1}^{\lfloor ct\rfloor} \E\big[\exp(\nu h(\beta)^{-1/2} X_{1,k})\big]. 
\end{align*}
Using the moment bound \eqref{eq:perturbationbound}, we can show that
\begin{align*} 
    \textstyle \E\big[\exp(\nu h(\beta)^{-1/2} X_{1,k})\big] &=
1 + O_{C,\nu} \big((\log(1/\beta))^{-1}\big), \quad k \geq 1, 
\end{align*}
using the same argument we used to establish \eqref{eq:mgfperturbation} in the proof of Lemma \ref{lemma:approximateupperbound}.
For a given $\nu>0$, take $\beta_0 = \beta_0(\nu)>0$ so that for $\beta \leq \beta_0$,
\[
\E\big[\exp(\nu h(\beta)^{-1/2} X_{1,k})\big] \leq \exp\big((1/4)(r\nu/c)\big), \quad k \geq 1,
\]
which implies that for $\beta \leq \beta_0$,
\[
\textstyle \prod_{k=1}^{\lfloor ct\rfloor} \E\big[\exp(\nu h(\beta)^{-1/2} X_{1,k})\big]  \leq \exp\big((1/4) r\nu t\big), \quad t>0.
\]
We then obtain by Doob's inequality for $\beta \leq \beta_0$,
\begin{align*}
    &\textstyle \P\big( \sup_{m \leq ct} h(\beta)^{-1/2} \sum_{k=1}^{m} X_{1,k} > (1/2) rt\big) 
    \leq\exp\big(\!-\!(1/4) r\nu t\big), \quad t > 0.
\end{align*}
Choosing $\nu$ appropriately, we obtain by \eqref{eq:goingtoonedim} for 
sufficiently small $\beta$,
\begin{align*}
    \textstyle \P\big(\sup_{m \leq ct} \big\|\sum_{k=1}^{m} X_k^{\beta}\big\|> rt\big) \leq 4 \, e^{-\delta t}, \quad t > 0.
\end{align*}
Finally, combining with \eqref{eq:conditionalprob1} and \eqref{eq:hardpart}, we obtain the desired result.
\end{proof}

\subsection{Proof of Lemma \ref{lemma:approximateupperbound3}} \label{app:brwapproxbrw_upper}

\begin{lemma} 
Fix $a>0$ and $0 < \rho < 1$, and define $A_r := [r,\infty) \times \R \times \Z_w$.
For each $\delta>0$, there exist
$M>0$ and 
$\beta_0>0$ so that
\begin{align*}
     &\textstyle \P\big(\mathring\phi_{t}^{\beta,0} \cap A_{b+at} \neq \varnothing\big) \leq \P\big(\phi_{t}^{\beta,0} \cap A_{b+\rho at} \neq \varnothing \big) + M e^{-\delta t}, \quad \beta \leq \beta_0, \; t>0, \; b \in \R.
\end{align*}
\end{lemma}

\begin{proof}
As in the proofs of Lemmas \ref{lemma:approximateupperbound} and \ref{lemma:approximateupperbound2}, we suppress the initial conditions of the processes from the notation, and assume that each is started with a single particle at the origin.
Since $0<\rho<1$, we can begin by writing
\begin{align*}
     &\textstyle \P\big(\mathring\phi_{t}^\beta \cap A_{b+at} \neq \varnothing\big) \\
     &\leq\textstyle \P\big(\phi_{t}^\beta \cap A_{b+\rho at} \neq \varnothing\big)+ \P\big(\mathring\phi_{t}^\beta \cap A_{b+at} \neq \varnothing, \phi_{t}^\beta \cap A_{b+\rho at} = \varnothing\big).
\end{align*}
Enumerate the particles in $\phi_t^\beta$ as $Y^{\beta,1}_t,Y^{\beta,2}_t,\ldots,Y_{t}^{\beta,N_t^\beta}$, where $N_t^\beta$ is the number of particles in $\phi_t^\beta$.
For each $Y_t^{\beta,i}$, let $\mathring Y_t^{\beta,i}$ denote the position of the corresponding particle
in $\mathring \phi_t^\beta$.
Then 
\begin{align*} 
\P\big(\mathring\phi_{t}^\beta \cap A_{b+at} \neq \varnothing, \phi_{t}^\beta \cap A_{b+\rho at} = \varnothing\big)  &\leq\textstyle \P\big(\bigcup_{i=1}^{N_t^\beta}\{||\mathring Y_t^{\beta,i}-Y_t^{\beta,i}|| > (1-\rho)at\}\big),
\end{align*}
and by Markov's inequality, 
\begin{align*} 
\textstyle \P\big(\bigcup_{i=1}^{N_t^\beta}\{||\mathring Y_t^{\beta,i}-Y_t^{\beta,i}|| > (1-\rho)at\}\big) \nonumber
    &\leq\textstyle \E\!\left[\sum_{i=1}^{N_t^\beta} 1_{\{||\mathring Y_t^{\beta,i}-Y_t^{\beta,i}|| > (1-\rho)at\}}\right].
\end{align*}
Let $I$ be the index of a particle chosen uniformly at random from
$Y^{\beta,1}_t,Y^{\beta,2}_t,\ldots,Y_{t}^{\beta,N_t^\beta}$,
i.e.~$\P\big(I=j\big|N_t^\beta\big)=1/N_t^\beta$ for $j=1,\ldots,N_t^\beta$.
Then
\begin{align*}
 \textstyle \E\!\left[\sum_{i=1}^{N_t^\beta} 1_{\{||\mathring Y_t^{\beta,i}-Y_t^{\beta,i}|| > (1-\rho)at\}}\right] = \E\!\left[N_t^\beta \cdot 1_{\{||\mathring Y_t^{\beta,I}-Y_t^{\beta,I}|| > (1-\rho)at\}}\right],
\end{align*}
and by Cauchy-Schwarz,
\begin{align*}
    \textstyle \E\!\left[N_t^\beta \cdot 1_{\{||\mathring Y_t^{\beta,I}-Y_t^{\beta,I}|| > (1-\rho)at\}}\right] \leq \sqrt{\E\big[(N_t^\beta)^2\big]} \cdot \sqrt{\P\big(||\mathring Y_t^{\beta,I}-Y_t^{\beta,I}|| > (1-\rho)at\big)}.
\end{align*}
Now, $\phi_t^\beta$ branches at exponential rate $\gamma(\beta) = \mu_w+o(1)$, with mean number of new particles $\ell(\beta) = 1+o(1)$ introduced per branching event.
Since $\gamma(\beta) \cdot \ell(\beta) \leq 2\mu_w$ for sufficiently small $\beta$, by Lemma 5 in \cite{foo2014escape}, there exists $M_1>0$ so that for sufficiently small $\beta$,
\begin{align*}
    \E\big[(N_t^\beta)^2\big] \leq M_1 (e^{2\mu_wt})^2, \quad t>0.
\end{align*}
Furthermore, for any $\delta>0$, by Lemma \ref{lemma:approximateupperbound2} above, there exists $M_2>0$ so that for sufficiently small $\beta$,
\begin{align*} 
\P\big(||\mathring Y_t^{\beta,I}-Y_t^{\beta,I}|| > (1-\rho)at\big) \leq M_2\, e^{-2(\delta+2\mu_w)t}, \quad t>0.
\end{align*}
Combining the above, there exists $M>0$ so that for sufficiently small $\beta$,
\begin{align*}
\P\big(\mathring\phi_{t}^\beta \cap A_{b+at} \neq \varnothing, \phi_{t}^\beta \cap A_{b+\rho at} = \varnothing\big) &\leq \textstyle M e^{-\delta t}, \quad t>0,
\end{align*}
and the result follows. \qedhere
\end{proof}

\subsection{Proof of Lemma \ref{lemma:upperbound}} \label{app:upperboundproof}

\begin{lemma} 
Define $\tau_{\varnothing}^A := \min\{t \geq 0 : \xi_{t}^A = \varnothing \}$ for $A \subseteq \mathbb{Z}^2 \times \Z_w$. For each $\kappa>1$, there 
exists a family of random variables $(S_{\beta})_{\beta>0}$, with $\P(S_\beta<\infty|\tau_\varnothing^0=\infty)=1$ for each $\beta>0$, so that
\begin{align*}
\textstyle \lim_{\beta\to 0} \liminf_{t\to\infty} \P\big(
\lceil\kappa a_wh(\beta)^{1/2} t\rceil e_1 \notin \xi_{S_\beta+h(\beta)t}^0 \,\big|\,  \tau_{\varnothing}^0 = \infty \big)  = 1,
\end{align*}
where $a_w := p_w \sqrt{\pi w}$. 
\end{lemma}

\begin{proof}
Take $\varepsilon>0$. We segment the proof into three main steps. \\

\noindent {\bf Step 1: Remove conditioning on nonextinction.} 
Let $B_r$ denote a box in $\Z^2 \times \Z_w$ centered at 0 with radius $r$, i.e.
\[
B_r := \{(x_1,x_2,x_3) \in \mathbb{Z}^2 \times \Z_w: \max\{|x_1|,|x_2|\} \leq r\}.
\]
If we set $r = r(\beta) = h(\beta)^{1/2}$, then by the gambler's ruin formula,
\begin{align*}
\P(\tau_\varnothing^{B_r}<\infty) &= (1+\beta)^{-(2\lfloor r\rfloor+1)^2w} 
= \exp\big(\!-\! \Theta(h(\beta)) \cdot \log(1+\beta)\big)
= o(1), \quad \beta \to 0,
\end{align*}
since $h(\beta) = (1/\beta) \cdot \log(1/\beta)$ and $\log(1+\beta) = \beta + o(\beta)$. 
Define $\sigma_{r,R} = \sigma_{r,R}(\beta)$ as
\begin{align*} 
\sigma_{r,R} := \inf\{t \geq 0: B_{r}\subseteq \xi_t^0 \subseteq B_{Rt}\}  . 
\end{align*}
By (3) and (7) in Bramson \& Griffeath \cite{BraGri80} (i.e.~the corresponding results for $\mathbb{Z}^2 \times \Z_w$), there exists a constant $R>0$ so that
\begin{align*} 
    \P(\sigma_{r,R}<\infty \,|\, \tau_\varnothing^0=\infty) =1,
\end{align*}
which implies $\P(\tau_\varnothing^0 = \infty) \leq \P(\sigma_{r,R}<\infty)$.
We then get by the strong Markov property and the monotonicity property \eqref{eq:monotonicity} of $\xi_t$, for any $t>0$, $m \geq 1$ and sufficiently small $\beta$,
\begin{align*}
    &\P\big( \lceil\kappa a_wh(\beta)^{1/2} t\rceil e_1 \notin  \xi_{\sigma_{r,R}+h(\beta)t}^0, \tau_\varnothing^0=\infty\big) \nonumber \\
    &\geq \textstyle \P\big( \lceil\kappa a_wh(\beta)^{1/2} t\rceil e_1 \notin  \xi_{\sigma_{r,R}+h(\beta)t}^0, \tau_\varnothing^0=\infty, \sigma_{r,R} \leq m\big) \nonumber \\
     &= \textstyle \sum_{B_r \subseteq \Lambda \subseteq B_{mR}} \P\big(\lceil\kappa a_wh(\beta)^{1/2} t\rceil e_1 \notin  \xi_{h(\beta)t}^\Lambda, \tau_\varnothing^\Lambda=\infty\big) \P(\sigma_{r,R} \leq m, \xi_{\sigma_{r,R}}^0 = \Lambda) \nonumber\\
          &\geq \textstyle \P\big(\lceil\kappa a_wh(\beta)^{1/2} t\rceil e_1 \notin  \xi_{h(\beta)t}^{B_{mR}}, \tau_\varnothing^{B_r}=\infty\big) \P(\sigma_{r,R} \leq m) \nonumber\\
          &\geq \textstyle \big(\P\big(\lceil\kappa a_wh(\beta)^{1/2} t\rceil e_1 \notin  \xi_{h(\beta)t}^{B_{mR}}\big) - \P\big(\tau_\varnothing^{B_r}<\infty\big)\big) \P(\sigma_{r,R} \leq m) \nonumber\\
          &\geq \textstyle \big(\P\big(\lceil\kappa a_wh(\beta)^{1/2} t\rceil e_1 \notin  \xi_{h(\beta)t}^{B_{mR}}\big) - \varepsilon\big) \P(\sigma_{r,R} \leq m). \nonumber 
\end{align*}
If we can establish that for sufficiently small $\beta$,
\begin{align*}
\textstyle\lim_{t \to \infty} \P\big(\lceil\kappa a_wh(\beta)^{1/2} t\rceil e_1 \notin  \xi_{h(\beta)t}^{B_{mR}}\big) = 1, \quad m \geq 1,
\end{align*}
then sending $m \to \infty$ will yield for sufficiently small $\beta$,
\begin{align*}
&  \textstyle \liminf_{t \to \infty} \P\big( \lceil\kappa a_wh(\beta)^{1/2} t\rceil e_1 \notin  \xi_{\sigma_{r,R}+h(\beta)t}^0, \tau_\varnothing^0=\infty\big) \geq (1-\varepsilon) \P(\sigma_{r,R}<\infty),
\end{align*}
and the desired result will follow from $\P(\sigma_{r,R}<\infty) \geq \P(\tau_\varnothing^0 = \infty)$. Equivalently, it is sufficient to show that for sufficiently small $\beta$,
\begin{align}
\textstyle\lim_{t \to \infty} \P\big(\lceil\kappa a_wh(\beta)^{1/2} t\rceil e_1 \in  \xi_{h(\beta)t}^{B_{mR}}\big) = 0, \quad m \geq 1. \label{eq:upperwanttoshow}    
\end{align}

\noindent {\bf Step 2: Introduce duality and apply approximation scheme.}
Note first that by the duality relation \eqref{dual_relation} between $\xi_t$ and $\tilde\zeta_t$, the translation invariance \eqref{eq:translinvariance} and symmetry property \eqref{eq:symmetry} of the dual process $\tilde\zeta_t$, and the definition \eqref{eq:defnscaleddual} of the scaled dual process $\tilde\zeta_t^\beta$,
    \begin{align}  \label{eq:intermediate3point5}
        \P\big(\lceil\kappa a_wh(\beta)^{1/2} t\rceil e_1 \in  \xi_{h(\beta)t}^{B_{mR}}\big) &= \P\big(\tilde\zeta_{h(\beta)t}^{\lceil\kappa a_wh(\beta)^{1/2} t\rceil e_1} \cap B_{mR} \neq \varnothing\big) \nonumber \\
        &= \P\big(\tilde\zeta_{h(\beta)t}^{0} \cap (B_{mR} + \lceil\kappa a_wh(\beta)^{1/2} t\rceil e_1) \neq \varnothing\big) \nonumber \\
        &= \P\big(\tilde\zeta_{t}^{\beta,0} \cap h(\beta)^{-1/2}(B_{mR} + \lceil\kappa a_wh(\beta)^{1/2} t\rceil e_1) \neq \varnothing\big).
    \end{align}
     Set $A_r := [r,\infty) \times \R \times \Z_w$
  and take $\kappa_1$ and $\kappa_2$ so that $1<\kappa_2<\kappa_1<\kappa$.
  For any $x = (x_1,x_2,x_3) \in B_{mR}$, we have $|x_1| \leq mR$ where $R>0$ is a constant.
  We can therefore choose $\beta$ sufficiently small that $|x_1^\beta| = h(\beta)^{-1/2} |x_1| \leq m$ for all $x \in B_{mR}$. Using the approximation Lemmas \ref{lemma:approximateupperbound} and \ref{lemma:approximateupperbound3}, we then obtain for sufficiently small $\beta$,
    \begin{align}  \label{eq:intermediate4}
    & \textstyle \limsup_{t \to \infty} \P\big(\tilde\zeta_{t}^{\beta,0} \cap h(\beta)^{-1/2}(B_{mR} + \lceil\kappa a_wh(\beta)^{1/2} t\rceil e_1) \neq \varnothing\big)  \nonumber \\
    &\leq \textstyle \limsup_{t \to \infty} \P(\tilde\zeta_t^{\beta,0} \cap A_{-m+\kappa a_wt} \neq \varnothing) \nonumber \\
        &\leq\textstyle 4 \limsup_{t \to \infty}\P\big( \mathring\phi_{t}^{\beta,0} \cap A_{-m+\kappa_1 a_w t} \neq \varnothing \big) \nonumber \\
        &\leq\textstyle 4 \limsup_{t \to \infty}\P\big( \phi_{t}^{\beta,0} \cap A_{-m+\kappa_2 a_w t} \neq \varnothing \big).
    \end{align}
    We have now reduced the problem to analyzing the tail of $\phi_t^{\beta,0}$, which is straightforward. \\
    
      \noindent {\bf Step 3: Analyze simple BRW.} We begin by using Markov's inequality to write
      \begin{align} \label{eq:expnumpart}
\P\big( \phi_{t}^{\beta,0} \cap A_{-m+\kappa_2 a_w t} \neq \varnothing \big) \leq \E\big|\phi_{t}^{\beta,0} \cap A_{-m+\kappa_2 a_w t} \big|.
\end{align}
    Recall that $\phi_t^{\beta,0}$ has branching rate $\mu_w+o(1)$, and on average, $1+o(1)$ new particles are added per branching event. Therefore,
    \begin{align} \label{eq:manytoone_brw_upper}
   &\E\big|\phi_{t}^{\beta,0} \cap A_{-m+\kappa_2 a_w t}  \big| 
   = \exp\big((\mu_w+o(1))t\big)  \cdot \P(h(\beta)^{-1/2} S_{h(\beta)t} \geq -m+\kappa_2 a_w t),     
    \end{align}
    where $(S_{s})_{s \geq 0}$ is the path of the SSRW on $\mathbb{Z}$ started at 0 with jump rate $p_w/2$, where $p_w$ is defined as in \eqref{eq:birthfirstwodimensions}. By \eqref{eq:momgenrwonedim}, its moment generating function is
        \begin{align*}
     \psi_s(\theta)&= \exp\big((p_w/2)s \cdot (\phi(\theta)-1)\big),
        \end{align*}
        where $\phi(\theta) = (e^\theta+e^{-\theta})/2$.
    Set $\theta_0 := 2\sqrt{\pi w } \kappa_2$, and note that
            \begin{align*}
     \psi_{h(\beta)t}(\theta_0 h(\beta)^{-1/2}) &=  \exp\big((p_w/2) h(\beta)t \cdot (\phi(\theta_0 h(\beta)^{-1/2})-1)\big). 
        \end{align*}
    Since $\phi(\theta_0 h(\beta)^{-1/2}) = 1+(1/2) \theta_0^2 h(\beta)^{-1} + o(h(\beta)^{-1})$, and $\mu_w = p_w \pi w$, we get
    \begin{align*}
        \psi_{h(\beta)t}(\theta_0h(\beta)^{-1/2}) &= 
       \exp\big((p_w \pi w \kappa_2^2 + o(1)) t\big) = 
      \exp\big((\mu_w \kappa_2^2 + o(1)) t\big). 
    \end{align*}
    Now, since $a_w = p_w \sqrt{\pi w}$, $\theta_0 = 2\sqrt{\pi w}\kappa_2$ and $\mu_w = p_w \pi w$, we have $\kappa_2 a_w \theta_0 = 2\mu_w \kappa_2^2$. We therefore obtain by Markov's inequality:
    \begin{align*}
    &\P(h(\beta)^{-1/2} S_{h(\beta)t} \geq -m+\kappa_2 a_w t ) \\
        &=\P\big(\exp(\theta_0 h(\beta)^{-1/2} S_{h(\beta)t}) \geq \exp( -m\theta_0+\kappa_2 a_w \theta_0 t)\big) \\
        &\leq 
       \exp\big((\mu_w \kappa_2^2 + o(1)) t\big) \cdot \exp\big(\!-2 \mu_w\kappa_2^2 t\big) \cdot e^{\theta_0m} \\
        &= 
       \exp\big((-\kappa_2^2 \mu_w + o(1))t\big) \cdot e^{\theta_0m}.
    \end{align*}
    Combining with \eqref{eq:manytoone_brw_upper}, we obtain
    \begin{align*}
       \E\big|\phi_{t}^{\beta,0} \cap A_{-m+\kappa_2 a_w t} \big|  &\leq 
       \exp\big(((1-\kappa_2^2)\mu_w + o(1))t\big) \cdot e^{\theta_0m}.
    \end{align*}
    Take $\kappa_3$ such that $1<\kappa_3<\kappa_2^2$. Then for sufficiently small $\beta$,
    \begin{align*} \label{eq:tailprobrw}
    \E\big|\phi_{t}^{\beta,0} \cap A_{-m+\kappa_2 a_w t} \big| &\leq \exp\big((1-\kappa_3)\mu_wt\big) \cdot e^{\theta_0m}.
    \end{align*}
   Combining this with \eqref{eq:intermediate3point5}, \eqref{eq:intermediate4} and \eqref{eq:expnumpart}, we obtain for sufficiently small $\beta$ and any $m \geq 1$,
   \begin{align*}
       \textstyle \limsup_{t \to \infty} \P\big(\lceil\kappa a_wh(\beta)^{1/2} t\rceil e_1 \in  \xi_{h(\beta)t}^{B_{mR}}\big) = 0, 
   \end{align*}
   which yields \eqref{eq:upperwanttoshow}, as desired.
    \end{proof}
    
    \subsection{Proof of Lemma \ref{lemma:approximation}} \label{app:prunedbrwapproximatedpruneddual}
    
    \begin{lemma} 
    Set $d(\beta) := \beta^{-1/2}(\log(1/\beta))^{-1}$. Let ${\cal A} = {\cal A}(\beta)$ denote the collection of finite subsets of $\Z^2 \times \Z_w$ in which points are pairwise separated by at least $d(\beta)$.
Set $A^\beta := h(\beta)^{-1/2}A$ for $A \in {\cal A}$ and ${\cal A}^\beta := h(\beta)^{-1/2} {\cal A}$.
Then, for any $K>0$ and $T>0$,
\[
\textstyle \sup_{A \in {\cal A}, \; |A| \leq K} \P\big(\{(\hat\zeta_t^{\beta,A^\beta})_{t \leq T} \neq (\mathring\psi_t^{\beta,A^\beta})_{t \leq T}\} \cup
\{\mathring\psi_T^{\beta,A^\beta} \notin {\cal A}^\beta\}\big) 
\to 0, \quad \beta \to 0.
\]
\end{lemma}

\begin{proof}
Recall that the pruned dual process $\hat\zeta_t$ includes any particle from the dual process $\tilde\zeta_t$ that has not coalesced with {\em any other particle in the process} by time $\tau(\beta)$. 
To show that
\begin{align*} 
(\hat\zeta_t^{\beta,A^\beta})_{t \leq T} = (\mathring\psi_t^{\beta,A^\beta})_{t \leq T}    
\end{align*}
with high probability for sufficiently small $\beta$, 
we need to show that any particle in the dual process $\tilde\zeta_t$ that does not coalesce with its parent by time $\tau(\beta)$ will, with high probability, (i) not coalesce with any other particle in the process before time $\tau(\beta)$, and (ii) neither coalesce with its parent nor another particle in the process after time $\tau(\beta)$.

Assume that the starting set $A$ has at most $K$ particles, i.e.~$|A| \leq K$.
On pages 1758-1759 of \cite{durrett2007}, Durrett and Z{\"a}hle establish the following for the $w=1$ case, i.e.~for $\Z^2$:
\begin{itemize}
    \item For any $\varepsilon>0$, there exists $M = M(\varepsilon,K,T)$ and $\beta_0 >0$ such that 
    \begin{align} \label{eq:durrzahl0}
    \P(|\hat\zeta_T^{\beta,A^\beta}| > M) \leq \varepsilon, \quad \beta \leq \beta_0,
    \end{align}
    i.e.~with high probability, the total number of particles in the scaled, pruned dual process at time $T$ is finite.
    \item If $Z_t^1$ and $Z_t^2$ are independent SSRWs on $\Z^2$ with jump rate 1, $\bar{Z}_t := Z_t^1-Z_t^2$, and $x = x(\beta) \in \Z^2$ with $||x||>d(\beta)$, then for any $R>0$,
\begin{align} \label{eq:durrzahl1.5}
\P(||\bar{Z}_t|| \leq R \text{ for some $t \leq h(\beta)T$} \,|\, \bar{Z}_0 = x) \to 0, \quad \beta \to 0.
\end{align}
\item If $Z_t^1$ and $Z_t^2$ are started at nearest neighbors, and $T_0$ denotes the time at which they first meet, then by the local central limit theorem on $\Z^2$, 
\begin{align} \label{eq:durrzahl2}
    \P(||\bar{Z}_{\tau(\beta)}|| \leq d(\beta) \,|\,  T_0 > \tau(\beta)) 
    = O\big((\log(1/\beta))^{-1/2}\big) = o(1), \quad \beta \to 0.
\end{align}
\end{itemize}
All statements continue to be true on $\Z^2 \times \Z_w$ for $w>1$.
In addition, following the same argument as used to establish \eqref{eq:durrzahl2},
we note that if $Z_t^1$ and $Z_t^2$ are independent SSRWs on $\Z^2 \times \Z_w$ with jump rate 1, $\bar{Z}_t = Z_t^1-Z_t^2$, 
and $g(\beta) = \omega\big(d(\beta)^2\big)$, i.e.~$d(\beta)^2/g(\beta) \to 0$ as $\beta \to 0$,
then by the local central limit theorem \eqref{eq:lcltZdsqrt} on $\Z^2$,
\begin{align} \label{eq:durrzahl3}
    \P\big(||\bar{Z}_{g(\beta)}|| \leq d(\beta) \,|\, \bar{Z}_0 = x) &= \textstyle \sum_{y: ||y|| \leq d(\beta)} \P(\bar{Z}_{g(\beta)} = y \,|\, \bar{Z}_0 = x ) \nonumber \\
    &= w (2d(\beta)+1)^2 \cdot O\big(g(\beta)^{-1}\big) \nonumber \\
    & = o(1)
\end{align}
for all $x \in \Z^2 \times \Z_w$.
The fact that $(\hat\zeta_t^{\beta,A^\beta})_{t \leq T} = (\mathring\psi_t^{\beta,A^\beta})_{t \leq T}$
with high probability (w.h.p.) will now follow from an induction argument similar to the one presented on pages 1758-1759 of \cite{durrett2007}.
To see that $\mathring\psi_T^{\beta,A^\beta} \in {\cal A}^\beta$ w.h.p., we first take $M$ so that there are at most $M$ particles w.h.p., by the same argument as in \eqref{eq:durrzahl0}.
In the unscaled process $(\mathring\psi_t^{A})_{t \leq h(\beta)T}$, the probability that at least one particle gives birth in the last $2\tau(\beta)$ time units is at most $M\big(1-\exp(-2\beta\tau(\beta))\big) = o(1)$, since $\tau(\beta) = (1/\beta) \big(1/\sqrt{\log(1/\beta)}\big)$.
If at time $h(\beta)T-2\tau(\beta)$, there exists a parent-daughter pair whose decision period has not yet passed, the daughter will be introduced to the pruned process by time $h(\beta)T-\tau(\beta)$, given that it does not coalesce with its parent.
Then, applying \eqref{eq:durrzahl3} to each pair of particles alive at time $h(\beta)T-\tau(\beta)$, and using the fact that $\tau(\beta) = \omega\big(d(\beta)^2\big)$, we see that the particles in $(\mathring\psi_t^{A})_{t \leq h(\beta)T}$ will be pairwise separated by at least $d(\beta)$ at time $h(\beta)T$ w.h.p.
It follows that
$\mathring\psi_T^{\beta,A^\beta} \in {\cal A}^\beta$ w.h.p. \qedhere
\end{proof}

\subsection{Proof of Lemma \ref{lemma:percolation}} \label{app:percolation}

\begin{lemma}
For $0<\theta<1$ and $L>0$, define (Fig.~\ref{fig:percolation}b)
 \begin{align*}
     & I_0^\theta := [-(1/2)\theta a_w L,(1/2)\theta a_w L]^2 \times \Z_w,  \\
     & I_k^\theta := I_0^\theta + k \cdot \theta a_w Le_1, \; k \in \mathbb{Z},
 \end{align*}
and
 \[
 {\cal A}^{\beta,\theta,K,k} := \{A^\beta \in {\cal A}^{\beta}: |A^\beta \cap I_k^\theta | \geq K\},
 \]
 with ${\cal A}^{\beta}$ defined as in Lemma \ref{lemma:approximation}.
 Let $(\hat\zeta_{t}^{\beta,A^\beta,\theta})_{t \geq 0}$ denote a pruning of $(\hat\zeta_{t}^{\beta,A^\beta})_{t \geq 0}$ with particles killed as soon as they exit the box $I_\Delta^\theta$ with $I_\Delta^\theta := [-2\theta a_w L,2\theta a_w L]^2 \times \mathbb{Z}_w$.
 Then, for any $2/3<\theta<1$ and $\varepsilon>0$, there exist $L=L(\theta)>0$, $K=K(\theta,\varepsilon)>0$ and $\beta_0 = \beta_0(\theta,\varepsilon)>0$ such that for any $A^\beta \in {\cal A}^{\beta,\theta,K,0}$ with $|A^\beta|=K$, and any $\beta \leq \beta_0$,
 \begin{align*}
\P(\hat{\zeta}_{{L}}^{\beta,A^\beta,\theta} \in {\cal A}^{\beta,\theta,K,k}) \geq 1-\varepsilon, \quad k \in \{-1,1\}.
\end{align*}
\end{lemma}

\begin{proof}
Let $L>0$ and $K>0$ be constants to be selected later.
Furthermore, let $(\mathring\psi_{t}^{\beta,A^\beta,\theta})_{t \geq 0}$ denote a pruning of $(\mathring\psi_{t}^{\beta,A^\beta})_{t \geq 0}$ with particles killed as soon as they exit $I_\Delta^{\theta}$.
By Lemma \ref{lemma:approximation}, we have for all $A^\beta \in {\cal A}^{\beta,\theta,K,0}$ with $|A^\beta|=K$ and sufficiently small $\beta$,
\begin{align}
    &\P(\hat{\zeta}_{{L}}^{\beta,A^\beta,\theta} \in {\cal A}^{\beta,\theta,K,k}) \nonumber \\
    &\geq\P\big(|\mathring{\psi}_{{L}}^{\beta,A^\beta,\theta} \cap I_k^\theta| \geq K,  (\hat\zeta_{t}^{\beta,A^\beta})_{t \leq L} = (\mathring\psi_{t}^{\beta,A^\beta})_{t \leq L}, \mathring\psi_T^{\beta,A^\beta} \in {\cal A}^\beta\big) \nonumber \\
    &\geq\P\big(|\mathring{\psi}_{{L}}^{\beta,A^\beta,\theta} \cap I_k^\theta| \geq K\big) - \P\big( \{(\hat\zeta_{t}^{\beta,A^\beta})_{t \leq L} \neq (\mathring\psi_{t}^{\beta,A^\beta})_{t \leq L}\} \cup \{\mathring\psi_T^{\beta,A^\beta} \notin {\cal A}^\beta\}\big) \nonumber \\
    &\geq\P\big(|\mathring{\psi}_{{L}}^{\beta,A^\beta,\theta} \cap I_k^\theta| \geq K\big) - \varepsilon/4, \quad k \in \{-1,1\}. \label{eq:percolation4}
\end{align}
Define a simple BRW $(\psi_t^\beta)_{t \geq 0}$ in terms of $(\mathring\psi_t^\beta)_{t \geq 0}$ analogously to how $(\phi_t^\beta)_{t \geq 0}$ is defined in terms of $(\mathring\phi_t^\beta)_{t \geq 0}$ in Section \ref{sec:markovianupper}.
Take $\theta_1$ so that $2/3<\theta_1<\theta$, and let $(\psi_{t}^{\beta,A^\beta,\theta_1})_{t \geq 0}$ denote a pruning of $(\psi_{t}^{\beta,A^\beta})_{t \geq 0}$ with particles killed as soon as they exit $I_\Delta^{\theta_1}$.
Now, consider the event
\[
\{|{\psi}_{{L}}^{\beta,A^\beta,\theta_1} \cap I_k^{\theta_1}| \geq K\}, \quad k \in \{-1,1\}.
\]
On the above event, pick $K$ particles from ${\psi}_{{L}}^{\beta,A^\beta,\theta_1} \cap I_k^{\theta_1}$, and consider one such particle $Y_L^{\beta}$. 
Lemma \ref{lemma:approximateupperbound2} implies that the distance between the path of this particle (and its ancestors) and the corresponding particle $\mathring Y_L^\beta$ in $\mathring \psi_L^\beta$ (and its ancestors) up until time $L$ is upper bounded by $(1/2)(\theta-\theta_1)a_wL$ with high probability given sufficiently small $\beta$.
This implies that w.h.p., $\mathring Y_L^\beta$ and its ancestors stay within $I_\Delta^{\theta}$ during $[0,L]$, and $\mathring Y_L^\beta$ ends up in $I_k^{\theta}$. Thus,
for sufficiently small $\beta$,
\begin{align*}
    \P\big(|{\psi}_{{L}}^{\beta,A^\beta,\theta_1} \cap I_k^{\theta_1} | \geq K\big) \leq \P(|\mathring{\psi}_{{L}}^{\beta,A^\beta,\theta} \cap I_k^\theta | \geq K) +\varepsilon/4, \quad k \in \{-1,1\},
\end{align*}
which yields by \eqref{eq:percolation4} for sufficiently small $\beta$,
\begin{align}
    &\P(\hat{\zeta}_{{L}}^{\beta,A^\beta,\theta} \in {\cal A}^{\beta,\theta,K,k}) \geq \P\big(|{\psi}_{{L}}^{\beta,A^\beta,\theta_1} \cap I_k^{\theta_1} | \geq K\big)-\varepsilon/2, \quad k \in \{-1,1\}.
 \label{eq:percolation3}
\end{align}
We now wish to estimate the probability on the right-hand side of \eqref{eq:percolation3}. 
Recall that the branching rate of $\psi_t^\beta$ is $\mu_w+o(1)$ by Lemma \ref{lemma:branchingrate}.
Then, for any $z^\beta \in I_0^{\theta_1} \cap h(\beta)^{-1/2}(\Z^2 \times \Z_w)$,
\begin{align} \label{eq:numberofparticlesbrownian}
&\mathbb{E} \big|\psi_{L}^{\beta,z^\beta,\theta_1} \cap I_k^{\theta_1} \big| = \exp\big({(\mu_w+o(1)) L}\big) \cdot \P\big(Z_L^{\beta,z^\beta,\theta_1} \in I_k^{\theta_1}\big), \quad k \in \{-1,1\},
\end{align}
where $(Z_t^{\beta,z^\beta})_{t \geq 0}$ is the scaled version of the SSRW $(Z_t^{z})_{t \geq 0}$ on $\mathbb{Z}^2 \times \Z_w$ with jump rate 1, and $( Z_t^{\beta,z^\beta,\theta_1})_{t \geq 0}$ is a pruned version where the walk is  killed if it exits $I_\Delta^{\theta_1}$ (see e.g.~(7.5) of \cite{durrett95} for why \eqref{eq:numberofparticlesbrownian} is true).
For the probability on the right-hand side, note that
\begin{align*} \label{eq:brownian1}
    & \P\big (Z_L^{\beta,z^\beta,\theta_1} \in I_k^{\theta_1} \big) = \P\big (Z_L^{\beta,z^\beta} \in I_k^{\theta_1} , Z_t^{\beta,z^\beta} \in I_\Delta^{\theta_1}  \text{ for all $t \leq L$}\big),     \quad k \in \{-1,1\}.
\end{align*}
Without loss of generality, set $k := 1$. Take $\delta>0$ so that for any $z^\beta = (z_1^\beta,z_2^\beta,z_3^\beta) \in I_0^{\theta_1} \cap h(\beta)^{-1/2}(\Z^2 \times \Z_w)$ with $z_1^\beta \geq 0$ and $z_2^\beta \geq 0$,
\[
(z^\beta + ([-\delta,0]^2 \times \Z_w)) 
\subseteq I_0^{\theta_1}.
\]
For such $z^\beta$ and $\delta>0$, and $J_\Delta^{\theta_1} := [-(3/2) \theta_1 a_wL, (3/2) \theta_1 a_wL]^2 \times \Z_w$, 
\begin{align} \label{eq:brownian2}
    &\P\big (Z_{L}^{\beta,z^\beta,\theta_1} \in I_1^{\theta_1} \big) \nonumber \\
 &= \P\big (Z_L^{\beta,z^\beta} \in I_1^{\theta_1} , Z_t^{\beta,z^\beta} \in I_\Delta^{\theta_1} \text{ for all $t \leq L$}\big) \nonumber \\
    &\geq \textstyle \P\big (Z_{L}^{\beta,0} \in ([-\delta,0]^2 \times \Z_w)  + \theta_1 a_w L e_1, Z_{t}^{\beta,0} \in J_\Delta^{\theta_1} \text{ for all $t \leq L$}\big) \nonumber \\
    &\geq \textstyle \P\big (Z_{L}^{\beta,0} \in ([-\delta,0]^2 \times \Z_w) + \theta_1 a_w L e_1\big) - \P(Z_{t}^{\beta,0} \notin J_\Delta^{\theta_1}  \text{ for some $t \leq L$}),
\end{align}
where we use the translation invariance of $(Z_{t}^\beta)_{t \geq 0}$. 
To analyze the latter term in \eqref{eq:brownian2}, write $Z_t = (Z_{1,t}, Z_{2,t},Z_{3,t})$, where $Z_{1,t}$ and $Z_{2,t}$ are i.i.d.~copies of the SSRW on $\Z$ with jump rate $p_w/2$ each, and $Z_{3,t}$ is the SSRW on $\Z_w$ with jump rate $1-p_w$, where $p_w$ is defined as in \eqref{eq:birthfirstwodimensions}.
Then note that
\begin{align}
    &\P(Z_{t}^{\beta,0} \notin J_\Delta^{\theta_1}  \text{ for some $t \leq L$}) \nonumber \\
    &\leq 2\,\P(Z_{1,t}^{\beta,0} \notin [-(3/2) \theta_1 a_w L,(3/2)\theta_1 a_w  L] \text{ for some $t \leq L$}) \nonumber \\ 
    &\leq 4\,\P\big(\textstyle h(\beta)^{-1/2} \sup_{t \leq h(\beta)L} Z_{1,t}^{0} > (3/2) \theta_1 a_w L\big). \nonumber
\end{align}
By assumption, $(3/2)\theta_1>1$. The same argument we used to analyze \eqref{eq:manytoone_brw_upper} in the proof of Lemma \ref{lemma:upperbound} (take $\kappa_2 := (3/2)\theta_1>1$ and $m := 0$, and use Doob's inequality to handle the supremum) will show that we can take $\gamma_2>0$ so that for sufficiently small $\beta$,
\begin{align} \label{eq:brownian3}
\exp\big({(\mu_w+o(1)) L}\big) \cdot \P\big(\textstyle h(\beta)^{-1/2} \sup_{t \leq h(\beta)L} Z_{1,t}^{0} > (3/2) \theta_1 a_w L\big) \leq \exp(-\gamma_2 \mu_w L).
\end{align}
To analyze the former term in \eqref{eq:brownian2}, note that the local central limit theorem \eqref{eq:lcltZdsqrt}  on $\Z^2$ 
implies that for all $y \in \Z^2$,
\[
\P\big((Z_{1,h(\beta)L}^0,Z_{2,h(\beta)L}^0) =  y \big) = \big(p_w\pi h(\beta)L\big)^{-1} \exp\big(\!-\!||y||^2/(p_w h(\beta)L)\big) + o_L\big(h(\beta)^{-1}\big),
\]
where $o_L$ signifies that the error term depends on $L$.
Since
\begin{align*}
    &\P\big (Z_{L}^{\beta,0} \in ([-\delta,0]^2 \times \Z_w) + \theta_1 a_w L e_1\big) \\
    &= \P\big ((Z_{1,h(\beta)L}^0,Z_{2,h(\beta)L}^0) \in ([-\delta h(\beta)^{1/2},0]^2 + \theta_1 a_w h(\beta)^{1/2} L e_1) \big) \nonumber \\
    &= \textstyle \sum_{y \in ([-\delta h(\beta)^{1/2},0]^2 + \theta_1 a_w h(\beta)^{1/2} L e_1)} \P\big((Z_{1,h(\beta)L}^0,Z_{2,h(\beta)L}^0)=y\big),
\end{align*}
and the number of terms in the sum is of order $h(\beta)$, we obtain for some $C>0$,
\begin{align*}
    &\P\big (Z_{L}^{\beta,0} \in ([-\delta,0]^2 \times \Z_w) + \theta_1 a_w L e_1\big) \geq (C/L) \cdot \exp(-\theta_1^2 \mu_w L) + o_L(1),
\end{align*}
where we use that $a_w^2/p_w = \mu_w$ since $a_w = p_w \sqrt{\pi w}$ and $\mu_w = p_w \pi w$.
Since $\theta_1<1$, we can then find $\gamma_1>0$ so that for sufficiently small $\beta$,
\begin{align} \label{eq:brownian4}
    & \exp\big({(\mu_w+o(1)) L}\big) \cdot \P\big (Z_{L}^{\beta,0} \in ([-\delta,0]^2 \times \Z_w) + \theta_1 a_w L e_1\big) \nonumber \\
    &\geq (C/L) \cdot \exp(\gamma_1 \mu_w L).
\end{align}
Combining \eqref{eq:brownian3} and \eqref{eq:brownian4} with \eqref{eq:numberofparticlesbrownian} and \eqref{eq:brownian2}, we obtain for sufficiently small $\beta$,
\begin{align*} 
&\mathbb{E} |\psi_{L}^{\beta,z^\beta,\theta_1}\cap I_1^{\theta_1} |
\geq (C/L) \cdot \exp\big(\gamma_1 \mu_w L\big) - 4 \exp\big(-\gamma_2\mu_w L\big),
\end{align*}
where $\gamma_1, \gamma_2>0$. Since the former term tends to $\infty$ as $L \to \infty$ and the latter term tends to 0 as $L \to \infty$, we can select $L=L(\theta_1)$ large enough so that
\begin{align*} \label{eq:brownian4}
\mathbb{E} |\psi_{L}^{\beta,z^\beta,\theta_1} \cap I_1^{\theta_1} | \geq 2, \quad z^\beta \in I_0^{\theta_1}  \cap h(\beta)^{-1/2}(\Z^2 \times \Z_w), \; z_1^\beta \geq 0, \; z_2^\beta \geq 0.  
\end{align*}
The same is true for $k=-1$, as well as $z^\beta$ in the second, third and fourth quadrant of $I_0^{\theta_1} \cap h(\beta)^{-1/2}(\Z^2 \times \Z_w)$.
Now, using the same argument as in Durrett and Z{\"a}hle (\cite{durrett2007}, p.~1760), for the given $L$ and any $\varepsilon>0$, we can select $K = K(\theta_1,\varepsilon)>0$
large enough so that for all $A^\beta \in {\cal A}^{\beta,\theta,K,0}$ with $|A^\beta|=K$ and sufficiently small $\beta$,
\[
\P(|\psi_{L}^{\beta,A^\beta,\theta_1} \cap I_k^{\theta_1}| < K) \leq \varepsilon/2, \quad k \in \{-1,1\}.
\]
Combining with \eqref{eq:percolation3}, which holds for sufficiently small $\beta$ given fixed $K$ and $L$, we obtain the desired result.
\end{proof}

\subsection{Proof of Lemma \ref{lemma:lowerbound}} \label{app:lowerboundproof}

\begin{lemma}
Define $\tau_{\varnothing}^A := \min\{t \geq 0 : \xi_{t}^A = \varnothing \}$ for $A \subseteq \mathbb{Z}^2 \times \Z_w$. For each $2/3<\rho<1$, there exists a constant $L>0$ and a family of random variables $(S_{\beta})_{\beta>0}$, with $\P(S_\beta<\infty|\tau_\varnothing^0=\infty)=1$ for each $\beta>0$, so that
\begin{align*}
\textstyle \lim_{\beta\to 0} 
\liminf_{n\to\infty} \P\big(
\xi_{S_\beta+2nLh(\beta)}^0 \cap \big[2n \rho a_w L h(\beta)^{1/2},\infty\big) e_1 \neq  \varnothing \,\big|\,  \tau_{\varnothing}^0 = \infty \big) = 1,
\end{align*}
where $a_w := p_w \sqrt{\pi w}$. 
\end{lemma}

\begin{proof}
Take $\varepsilon>0$. We segment the proof into three main steps. \\

\noindent {\bf Step 1: Remove conditioning on nonextinction.} Let $C_r$ denote a box in $\Z^2 \times \Z_w$, centered at 0 with side lengths $10r$ and $2r$, i.e.
\[
C_r := \{(x_1,x_2,x_3) \in \mathbb{Z}^2 \times \Z_w: |x_1| \leq 5r, |x_2| \leq r\},
\]
and set $\sigma_{r} := \inf\{t \geq 0: C_{r} \subseteq \xi_t^0\}$.
As in the proof of Lemma \ref{lemma:upperbound}, we note that if $r = r(\beta) = M \cdot h(\beta)^{1/2}$ for some $M>0$, then $\P(\tau_\varnothing^{C_r} < \infty) = o(1)$ as $\beta \to 0$, and $\P(\sigma_{r}<\infty\,|\,\tau_\varnothing^0=\infty)=1$, so $\P(\sigma_{r}<\infty) \geq \P(\tau_\varnothing^0=\infty)$.
Let $L>0$ and $\rho<\rho_1<1$, and set
\begin{align*}
& \gamma := (1/2)\rho_1 a_w L \quad\text{and}\quad r := \gamma h(\beta)^{1/2}.
\end{align*}
We then get by the strong Markov property and the monotonicity property \eqref{eq:monotonicity} of $\xi_t$,
for any $n \geq 1$ and sufficiently small $\beta$,
\begin{align}
    &\P\big( \xi_{\sigma_{r}+2nLh(\beta)}^0 \cap \big[2n \rho a_w L h(\beta)^{1/2},\infty\big) e_1 \neq  \varnothing, \tau_\varnothing^0=\infty\big) \nonumber \\
      &= \!\textstyle\sum_{C_{r} \subseteq \Lambda} \int_s \P\big(\xi_{2nLh(\beta)}^\Lambda \cap \big[2n \rho a_w L h(\beta)^{1/2},\infty\big) e_1 \neq  \varnothing,  \tau_\varnothing^{\Lambda}=\infty \big) \P\big(\sigma_{r} \in ds, \xi_{\sigma_{r}}^0 = \Lambda \big) \nonumber\\
      &\geq \P\big(\xi_{2nLh(\beta)}^{C_{r}} \cap \big[2n \rho a_w L h(\beta)^{1/2},\infty\big) e_1 \neq  \varnothing,  \tau_\varnothing^{C_{r}} = \infty \big) \P(\sigma_{r}<\infty)\nonumber\\
      &\geq  \big(\P\big(\xi_{2nLh(\beta)}^{C_{r}} \cap \big[2n \rho a_w L h(\beta)^{1/2},\infty\big) e_1 \neq  \varnothing\big) - \P\big(\tau_\varnothing^{C_{r}} <\infty\big)\big) \P(\sigma_{r}<\infty) \nonumber\\
       &\geq \big(\P\big(\xi_{2nLh(\beta))}^{C_{r}} \cap \big[2n \rho a_w L h(\beta)^{1/2},\infty\big) e_1 \neq  \varnothing \big) - \varepsilon\big) \P(\tau_\varnothing^0=\infty), \nonumber
\end{align}
from which it follows that
\begin{align}
    &\P\big( \xi_{\sigma_{r}+2nLh(\beta)}^0 \cap \big[2n \rho a_w L h(\beta)^{1/2},\infty\big) e_1 \neq  \varnothing \,\big|\, \tau_\varnothing^0=\infty\big) \nonumber \\
       &\geq \P\big( \xi_{2nLh(\beta)}^{C_{r}} \cap \big[2n \rho a_w L h(\beta)^{1/2},\infty\big) e_1 \neq  \varnothing \big) - \varepsilon.
\label{eq:intermediate1}
\end{align}

\noindent {\bf Step 2: Introduce duality.} Let $K>0$ be a constant to be selected later, and set $d(\beta) := \beta^{-1/2} (\log(1/\beta))^{-1}$ as in Lemma \ref{lemma:approximation}. Define
\begin{align*}
    & A_0 = A_0(\beta) := \lceil d(\beta)\rceil \cdot \llbracket-K+1,0\rrbracket \cdot e_1, \\
    &A_0^\beta := h(\beta)^{-1/2} A_0,
\end{align*}
with $\llbracket m,n\rrbracket = \{m,m+1,\ldots,n\}$ for integers $m<n$ (with possibly $n=\infty$).
Note that $A_0$ has $K$ points which are pairwise separated by at least $d(\beta)$.
Since $d(\beta) = o\big(\sqrt{h(\beta)}\big)$, it follows that $A_0^\beta \in {\cal A}^{\beta,\rho_1,K,0}$ for sufficiently small $\beta$, where ${\cal A}^{\beta,\rho_1,K,0}$ is defined as in Lemma \ref{lemma:percolation}. 
Next define
\[
g(n) = g(n,\beta) := (2\rho_1 a_w L h(\beta)^{1/2})^{-1} \lceil 2n \rho a_w L h(\beta)^{1/2} \rceil,
\]
and note that for fixed $\beta$, $g(n)/n \to \rho/\rho_1 < 1$ as $n\to \infty$.
Continuing on from \eqref{eq:intermediate1}, we obtain using the duality relation \eqref{dual_relation} between $\xi_t$ and $\tilde\zeta_t$, the monotonicity property \eqref{eq:monotonicity} of $\tilde\zeta_t$, the translation invariance \eqref{eq:translinvariance} and symmetry property \eqref{eq:symmetry} of $\tilde\zeta_t$, and the definition \eqref{eq:defnscaleddual} of the scaled dual process $\tilde\zeta_t^\beta$,
\begin{align}
    &\P\big(\xi_{2nLh(\beta)}^{C_{r}} \cap \big[2n \rho a_w L h(\beta)^{1/2},\infty\big) e_1 \neq  \varnothing\big) \nonumber \\
    &= \P\big(\tilde\zeta_{2nLh(\beta)}^{ \llbracket \lceil 2n \rho a_w L h(\beta)^{1/2}\rceil,\infty\rrbracket e_1} \cap C_{r} \neq \varnothing \big) \nonumber \\
        &\geq \P\big(\tilde\zeta_{2nLh(\beta)}^{- A_0+\lceil 2n \rho a_w L h(\beta)^{1/2}\rceil e_1} \cap C_{r} \neq \varnothing \big) \nonumber \\
    &= \P\big(\tilde\zeta_{2nLh(\beta)}^{A_0} \cap (C_{r}+\lceil 2n \rho a_w L h(\beta)^{1/2} \rceil e_1 ) \neq \varnothing \big) \nonumber \\
         &= \P\big(\tilde\zeta_{2nL}^{\beta,A_0^\beta} \cap \big(([-5\gamma,5\gamma] \times [-\gamma,\gamma] \times \Z_w)+2g(n) \cdot \rho_1a_wL e_1\big) \neq \varnothing \big). \label{eq:intermediate1.5}
\end{align}
Recall that $\gamma = (1/2)\rho_1a_wL$, and define $I_0^{\rho_1} := [-\gamma,\gamma]^2 \times \Z_w$ and $I_k^{\rho_1} := I_0^{\rho_1} + k \cdot 2\gamma e_1$ for $k \in \Z$ as in Lemma \ref{lemma:percolation}.
Then 
\begin{align}
& \P\big(\tilde\zeta_{2nL}^{\beta,A_0^\beta} \cap \big(([-5\gamma,5\gamma] \times [-\gamma,\gamma] \times \Z_w)+2g(n) \cdot \rho_1a_wL e_1\big) \neq \varnothing \big) \nonumber \\
    &\geq \P\big(\tilde\zeta_{2nL}^{\beta, A_0^\beta} \cap \big(I_0^{\rho_1} +2\lceil g(n)\rceil \cdot \rho_1a_wL e_1\big) \neq \varnothing \big)  \nonumber\\
     &\geq \P\big(\hat\zeta_{2nL}^{\beta, A_0^\beta} \cap I^{\rho_1}_{2\lceil g(n)\rceil} \neq \varnothing \big),
    \label{eq:intermediate2}
\end{align}
where 
in the last step, we use the lower-bounding property \eqref{eq:dualprunedduallower} of the pruned dual process $\hat\zeta_t$.  We are now ready to apply the percolation construction of Lemma \ref{lemma:percolation}.
\\

\noindent {\bf Step 3: Compare with oriented percolation.}
Let $(\hat\zeta_t^{\beta,A_0^\beta,\rho_1})_{t \geq 0}$ be a pruning of $(\hat\zeta_t^{\beta,A^\beta})_{t \geq 0}$ with particles killed as soon as they exit the box $I_\Delta^{\rho_1}$ with $I_\Delta^{\rho_1} := [-2\rho_1a_wL,2\rho_1a_wL]^2 \times \Z_w$.
Note first that for any $K>0$,
 \begin{align}
    & \P\big(\hat\zeta_{2nL}^{\beta, A_0^\beta} \cap I^{\rho_1}_{2\lceil g(n)\rceil}  \neq \varnothing \big) \geq \P\big(\hat\zeta_{2nL}^{\beta, A_0^\beta,\rho_1} \in {\cal A}^{\beta,\rho_1,K,2\lceil g(n)\rceil}\big).
        \label{eq:intermediate8}
    \end{align}
By assumption, $2/3<\rho_1<1$, so by Lemma \ref{lemma:percolation}, we can choose $K$ and $L$ so that for any $A^\beta \in {\cal A}^{\beta,\rho_1,K,0}$ with $|A^\beta|=K$ and sufficiently small $\beta$,
    \begin{align*} \label{eq:percinequality}
        \P(\hat{\zeta}_{{L}}^{\beta,A^\beta,\rho_1} \in {\cal A}^{\beta, \rho_1, K, k}) \geq 1-\varepsilon/2, \quad k \in \{-1,1\}.
    \end{align*}
Now set
\[
X_n := \{k \in \mathbb{Z}: \text{$k+n$ even} \text{ and } \hat \zeta_{nL}^{\beta,A_0^\beta,\rho_1} \in {\cal A}^{\beta, \rho_1, K, k} \}, \quad n \geq 0.
\]
By Theorem 4.3 of \cite{durrett95}, $X_n$ dominates a one-dependent oriented percolation process $\{\omega_n^0\}_{n \geq 0}$ with density $\geq 1-\varepsilon$ and $\omega_0^0 = \{0\}$, i.e.~$\omega_n^0 \subseteq X_n$ for all $n$. 
Let $\Omega_\infty^0$ denote the event $\{|\bigcup_n \omega_n^0|=\infty\}$, i.e.~the event that percolation occurs, and let $l_n^0 = \min \omega_n^0$ (resp.~$r_n^0 = \max \omega_n^0$) denote the left (resp.~right) edge of the process. Take $\rho_2$ so that $\rho/\rho_1<\rho_2<1$. By Theorem 4.1 of \cite{durrett95}, we have for sufficiently small $\varepsilon$,
\begin{align} \label{eq:percolation1}
\P(\Omega_\infty^0) \geq 1-55\varepsilon^{1/9},    
\end{align}
and by Theorem 3.21 on page 300 of \cite{liggett2012interacting}, we have for sufficiently small $\varepsilon$,
\begin{align} \label{eq:percolation2}
\textstyle \P(r_n^0/n \geq \rho_2\,|\, \Omega_\infty^0) \geq 1-3^{-n+1}.
\end{align}
Since for fixed $\beta$, $g(n)/n \to \rho/\rho_1$ as $n \to \infty$, we further have for sufficiently large $n$,
\begin{align} \label{eq:choosen}
    \lceil g(n)\rceil /n \leq \rho_2.
\end{align}
Continuing on from \eqref{eq:intermediate8}, we obtain for sufficiently small $\beta$ and sufficiently large $n$,
    \begin{align*}
    \P\big(\hat\zeta_{2nL}^{\beta, A_0^\beta,\rho_1} \in {\cal A}^{\beta,\rho_1,K,2\lceil g(n)\rceil}\big) 
    &\geq \P(2\lceil g(n)\rceil \in \omega_{2n}^0) \nonumber \\
    &\geq \P(2\lceil g(n)\rceil \in \omega_{2n}^0, r_{2n}^0 \geq 2\rho_2n, \Omega_\infty^0).
    \end{align*}
    On $\Omega_\infty^0$, we have $\omega_n^0 = \omega_n^{2\mathbb{Z}} \cap \llbracket l_n^0,r_n^0\rrbracket$ (see Section 8 of \cite{Dur84}). By \eqref{eq:choosen}, we can therefore write for sufficiently large $n$,
    \begin{align*}
        \P(2\lceil g(n)\rceil \in \omega_{2n}^0, r_{2n}^0 \geq 2\rho_2n , \Omega_\infty^0) &= \P(2\lceil g(n)\rceil \in \omega_{2n}^{2\mathbb{Z}} , r_{2n}^0 \geq 2\rho_2n , \Omega_\infty^0) \\
        &\geq\P(2\lceil g(n)\rceil \in \omega_{2n}^{2\mathbb{Z}}) + \P(r_{2n}^0 \geq 2\rho_2n , \Omega_\infty^0)-1 \\
        &= \P(2\lceil g(n)\rceil \in \omega_{2n}^{2\mathbb{Z}}) + \P(r_{2n}^0 \geq 2\rho_2n \,|\, \Omega_\infty^0)\P(\Omega_\infty^0)-1.
    \end{align*}
    Furthermore, $\omega_n^0$ is self-dual (see Section 8 of \cite{Dur84}), so
    \[
    \P(2\lceil g(n)\rceil \in \omega_{2n}^{2\mathbb{Z}}) = \P(\omega_{2n}^{2\lceil g(n)\rceil} \neq \varnothing) = \P(\omega_{2n}^0 \neq \varnothing) \geq \P(\Omega_\infty^0).
    \]
    By \eqref{eq:percolation1} and \eqref{eq:percolation2}, we therefore obtain for sufficiently small $\beta$ and sufficiently large $n$,
    \begin{align*}
    &\P\big(\hat\zeta_{2nL}^{\beta, A_0^\beta,\rho_1} \in {\cal A}^{\beta,\rho_1,K,2\lceil g(n)\rceil}\big) 
    \geq (2-3^{-2n+1}) (1-55\varepsilon^{1/9})-1.
    \label{eq:finalinequalitylowerbound}
    \end{align*}
    Combining this with  \eqref{eq:intermediate1}, \eqref{eq:intermediate1.5}, \eqref{eq:intermediate2} and \eqref{eq:intermediate8}, we finally obtain for sufficiently small $\beta$ and sufficiently large $n$,
    \begin{align*}
    &\P\big( \xi_{\sigma_{r}+2nLh(\beta)}^0 \cap \big[2n \rho a_w Lh(\beta)^{1/2},\infty\big) e_1 \neq  \varnothing \,\big|\, \tau_\varnothing^0=\infty\big) \nonumber \\
       &\geq \big((2-3^{-2n+1}) (1-55\varepsilon^{1/9})-1\big) - \varepsilon,
\end{align*}
and the result follows.
\end{proof}

\subsection{Extension of Bramson-Griffeath shape theorem} \label{app:BGtheorem}

In this section, we discuss how the Bramson-Griffeath shape theorem on $\Z^2$ can be extended to $\Z^2 \times \Z_w$.
Bramson and Griffeath's proof is split across two papers.
In \cite{BraGri81}, they show that with probability 1, the biased voter model on $\Z^d$, conditioned on nonextinction, eventually contains a ball which expands linearly in time.
Then, in \cite{BraGri80}, they use results from \cite{BraGri81} to show that the biased voter model on $\Z^d$ satisfies five conditions formulated by Richardson \cite{Richardson73}, which together guarantee the existence of a linearly-expanding asymptotic shape.

The key to extending the BG shape theorem to $\Z^2 \times \Z_w$ is to interpret Richardson's conditions with the appropriate notion of spatial scaling and distance.
Richardson's conditions deal with the ``first infection times'' of distant points of the form $x/\Delta$ for small $\Delta>0$, i.e.~the times at which the process first reaches these points.
For long-run growth to be linear, the infection time of $x/\Delta$ should be of order $1/\Delta$ as $\Delta \to 0$, i.e.~the infection time of $x/\Delta$ scaled by $\Delta$ should be finite.
In \cite{Richardson73}, Richardson proposes five conditions on these scaled infection times, which together imply the existence of a linearly-expanding asymptotic shape.
On $\Z^2 \times \Z_w$, the mutant clone will expand without bound along the first two coordinates, while the third coordinate remains bounded throughout.
As a result, we have defined the scalar multiplication  \eqref{eq:scalarmultpldef} and the distance from the origin \eqref{eq:seminormdef} in the main text as only applying to the first two coordinates, and we view the scaled points $x/\Delta$ as living on $\R^2 \times \Z_w$.

Once Richardson's conditions have been interpreted appropriately, Bramson and Griffeath's arguments on $\Z^2$ extend naturally to $\Z^2 \times \Z_w$ for the most part, since they largely rely on fundamental properties of the biased voter model, such as the ones presented in \eqref{eq:additivity}-\eqref{eq:symmetry} of Section \ref{sec:duality} and the strong Markov property.
Where modifications are necessary, one can generally focus on movement along the first two coordinates and use the same arguments as on $\Z^2$, 
taking into account that the SSRW on $\Z^2 \times \Z_w$ takes a step in the $\Z^2$-direction with probability $p_w$ given by \eqref{eq:birthfirstwodimensions}.
To avoid repeating the arguments, we will focus our discussion on how to interpret Richardson's conditions on $\Z^2 \times \Z_w$, and on how Richardson's conditions give rise to an asymptotic shape of the form $D = \bigcup_{i \in \Z_w} (X \times \{i\})$ given in the main text.

\subsubsection{Richardson's conditions for the existence of an asymptotic shape} \label{sec:richardson}

We begin by stating a slightly stronger version of Richardson's conditions on $\Z^d$, as presented by Bramson and Griffeath in \cite{BraGri80}.
First, let $(\overline{\xi}_t^0)_{t \geq 0}$ be the biased voter model on $\Z^d$ started at the origin and conditioned on nonextinction, i.e.~the process $(\xi_t^0 | \tau_\varnothing^0 = \infty)_{t \geq 0}$. 
For $x \in \Z^d$, define the first infection time of $x$ as
\begin{align*}
    t(x) := \inf\{t : x \in \overline{\xi}_t^0\},
\end{align*}
with $\inf \varnothing = \infty$.
Then introduce the scaled infection time
\begin{align*}
    t_\Delta(x) := \Delta \cdot t(x/\Delta),
\end{align*}
where $\Delta >0$.
Note that $x/\Delta$ is in general not in $\Z^d$.
We therefore extend the definition of $t(y)$ so that for $y \in \R^d$, it takes the same value as the nearest lattice location in $\Z^d$.

When stating Richardson's conditions, we use the same symbol $\|\cdot\|$ for the Euclidean norm on $\R^d$ as for the distance from the origin \eqref{eq:seminormdef} on $\R^2 \times \Z_w$, since they coincide for $d=2$ and $w=1$.
Let $V(x,y)$ denote a random function of $x,y \in \R^d$ which satisfies
\begin{align*}
    \E[V^2(x,y)] = O(||x+y||), 
\end{align*}
and define
\begin{align*}
    V_\Delta(x,y) := \Delta \cdot V(x/\Delta,y/\Delta).
\end{align*}
Let $\varepsilon_i(\Delta)$ denote an $O(\Delta)$ function, and $\varepsilon_i(\Delta,x)$ denote an $O(\Delta)$ function that depends on $x$.
Let $x,y \in \R^d$.
Richardson's conditions are, in Bramson and Griffeath's formulation:
\begin{itemize}
    \item[(A1)] (Scaled infection times are essentially subadditive.) For all $\alpha \in \mathbb{R}$:
    \begin{align*}
        \P(t_\Delta(x+y) \leq \alpha) \geq \P(t_\Delta(x)+s_\Delta(y)+V_\Delta(x,y) \leq \alpha),
    \end{align*}
    where $s_\Delta(y)$ is a copy of $t_\Delta(y)$ which is independent of $t_\Delta(x)$.
    \item[(A2)] (Long-run growth is at most linear.) For some $L>0$,
    \begin{align*}
        \E[t_\Delta(x)] \geq ||x||/L + \varepsilon_1(\Delta,x).
    \end{align*}
    \item[(A3)] (Nearby sites tend to be infected at similar times.) For some $r'>0$,
    \begin{align*}
        \P(\{y: ||x-y|| \leq r'\delta\} \subseteq \{y: |t_\Delta(x)-t_\Delta(y) | \leq \delta\}) \geq 1-\varepsilon_2(\Delta,\delta).
    \end{align*}
    \item[(A4)] (Moment bound on scaled infection times.) $\E[t_\Delta^2(x)]$ exists, and for some $r>0$,
    \begin{align*}
        \E[t_\Delta^2(x)] \leq \|x/r\|^2 + \varepsilon_3(\Delta).
    \end{align*}
    \item[(A5)] (Symmetry condition.)
    \[
    \E[t_\Delta(x)] \leq \E[t_\Delta(-x)] + \varepsilon_4(\Delta,x).
    \]
\end{itemize}
Under slightly weaker conditions, Richardson shows in \cite{Richardson73} that if (A1)-(A5) hold for a given growth process on $\R^d$, then $\lim_{\Delta \to 0} \E[t_\Delta(x)]$ exists. He defines
\begin{align} \label{eq:normdef}
    \textstyle N(x) := \lim_{\Delta \to 0} \E[t_\Delta(x)],
\end{align}
and shows that $N$ is a norm on $\R^d$.
He also shows that ${\rm Var}[t_\Delta(x)] = \varepsilon_5(\Delta,x)$, which implies that $\lim_{\Delta \to 0} t_\Delta(x) = N(x)$ in probability.
    Let $D_R' := \{x \in \R^d: N(x) \leq R\}$ be the ball of radius $R$ under $N$.
    Richardson's main result is that if (A1)-(A5) hold for a given growth process, then for any $\varepsilon>0$, there exists $t_\ast <\infty$ so that
    \begin{align} \label{eq:richardsonresult}
        \P\big(D'_{(1-\varepsilon)t} \cap \Z^d \subseteq \{x: t(x) \leq t\} \subseteq D'_{(1+\varepsilon)t}\big) \geq 1-\varepsilon, \quad t \geq t_\ast.
    \end{align}
    
    On $\R^2 \times \Z_w$, we reinterpret Richardson's conditions (A1)-(A5) in terms of the scalar multiplication in \eqref{eq:scalarmultpldef} and the distance function in \eqref{eq:seminormdef}.
    Set ${e}_3 = (0,0,1)$.
    By definition, $\E[t_\Delta({e}_3)] = \Delta \E[t(e_3)] = O(\Delta)$, and by condition (A4), $\E[t_\Delta^2(e_3)] = O(\Delta)$.
    Thus, the effect of jumping between layers is insignificant under the scaling by $\Delta$.
    In particular, $N(e_3) = 0$, so $N$ does not separate points on $\R^2 \times \Z_w$, which is a consequence of the fact that $\|\cdot\|$ on $\R^2 \times \Z_w$ does not separate points.
    However, $N$ satisfies the triangle inequality and $N(tx) = |t|N(x)$ for $t \in \R$ and $x \in \R^2 \times \Z_w$, i.e.~$N$ has the properties of a seminorm.
    (We do not explicitly refer to $N$ as a seminorm since
    $\R^2 \times \Z_w$ is not a vector space.)
    From the triangle inequality, it follows that $N(x+e_3) = N(x)$ for all $x \in \R^2 \times \Z_w$.
    
    Apart from the above, Richardson's proof that \eqref{eq:richardsonresult} follows from (A1)-(A5) extends to $\R^2 \times \Z_w$ for the most part.
    A slight modification is needed in his Lemma 5, where he makes use of the topology on $\R^d$. 
    The lemma states that the convergence $\lim_{\Delta \to 0} \E[t_\Delta(x)] = N(x)$ is uniform in $x$ on any bounded set around the origin.
    On $\R^2 \times \Z_w$, we can use Richardson's argument to establish uniform convergence layer by layer.
    Then, since there are only finitely many layers, we still get the desired conclusion that for any $R>0$, $|N(x)-\E[t_\Delta(x)]| = O(\Delta)$ for all $x$ with $\|x\| \leq R$.
    In his Lemma 7, Richardson shows that
    \begin{align*} 
        \textstyle {\rm Var}[t_\Delta(x)] \leq {\rm Var}\big[t_\Delta(\frac12 x)+s_\Delta(\frac12 x)\big] + \varepsilon_6(\Delta,x),
    \end{align*}
    where $s_\Delta(\frac12x)$ is an independent copy of $t_\Delta(\frac12 x)$.
    The proof uses that $x=\frac12x+\frac12x$, which is not true in general on $\R^2 \times \Z_w$.
    It is true, however, up to addition by a vector parallel to $e_3$, whose scaled infection time has negligible variance by the above.
    In his Lemma 10, Richardson uses the fact that if a site $x/\Delta$ is first infected at some time $ \leq 1$ in the scaled spacetime, there must be a path from the origin to $x/\Delta$ on which first infection times are $\leq 1$, and along which $N(y)$ varies continuously.
    Since $N(y+e_3) = N(y)$,
    adding jumps across layers will not disrupt this continuity property.
    
    Note that \eqref{eq:richardsonresult} is a weaker statement than the Bramson-Griffeath shape theorem, both because it does not hold almost surely, and because $\{x: t(x) \leq t\}$ describes the set of points that have been infected at some point before time $t$, whereas the shape theorem \eqref{eq:shapethm} makes a claim on the state of the process at time $t$.
    This is the reason that Bramson and Griffeath use slightly stronger conditions than Richardson's original conditions to prove their result.

\subsubsection{Final result on $\Z^2 \times \Z_w$}  \label{sec:bragri2}

As previously mentioned, Bramson and Griffeath's proof that (A1)-(A5) hold for the biased voter model on $\Z^2$, conditioned on nonextinction, will carry over to $\Z^2 \times \Z_w$ with minor modifications.
To state the final result on $\Z^2 \times \Z_w$, let $N$ be defined by \eqref{eq:normdef} and set $D_R' := \{x \in \R^2 \times \Z_w: N(x) \leq R\}$.
Then, for all $\varepsilon>0$,
    \begin{align*}
        \P\big(\exists\; t_\ast < \infty: D'_{(1-\varepsilon)t} \cap (\Z^2 \times \Z_w) \subseteq \xi_t^0 \subseteq D'_{(1+\varepsilon)t} \; \forall t \geq t_\ast \;\big|\; \tau_\varnothing^0=\infty\big) =1,
    \end{align*}
    which is the stronger version of \eqref{eq:richardsonresult}.
In the statement \eqref{eq:shapethm} of the shape theorem in the main text, the set $D$ is the unit ball on $\R^2 \times \Z_w$ under $N$, i.e.~$D := D_1'$.
Define 
\[
X := \{(x_1,x_2) \in \R^2: (x_1,x_2,0) \in D\} = \{(x_1,x_2) \in \R^2: N((x_1,x_2,0)) \leq 1\}.
\]
Since $N(x+e_3)=N(x)$ for all $x \in \R^2 \times \Z_w$, we must have
$D = \cup_{i \in \Z_w} (X \times \{i\})$.
Since $N$ satisfies the triangle inequality and $N(tx)=|t|N(x)$ for $t \in \R$ and $x \in \R^2 \times \Z_w$,
the set $X$ is convex on $\R^2$.
Finally, $X$ inherits all symmetries of the biased voter model.

\begin{appendix}

\section{Boundary condition comparison} \label{app:boundarycondition}

Here, we use simulation to compare the propagation speed of the biased voter model on $w$ layers of two-dimensional integer lattices for two different boundary conditions along the third dimension. On the one hand, we consider $\Z^2 \times \Z_w$ with a periodic boundary condition, and on the other hand, we consider $\Z^2 \times \llbracket 0,w-1\rrbracket$ with a reflecting boundary condition, i.e.~cells on the top (resp.~bottom) layer can only replace cells on the same layer and the layer immediately below (resp.~above).
In Figure \ref{fig:speed_beta}, we show results of simulations of these two processes given tissue thickness $w=2, 3, 4, 5$ and fitness advantage $\beta=0.01, 0.05, 0.1$. We ran at least 30 simulations for each set of parameters and stopped each simulation when the process reached $(100,0,0)$ or $(-100,0,0)$. We then used this data to determine an average speed and a 95\% confidence interval for each set of parameters.

\begin{figure}
\centering
\includegraphics[scale=0.75]{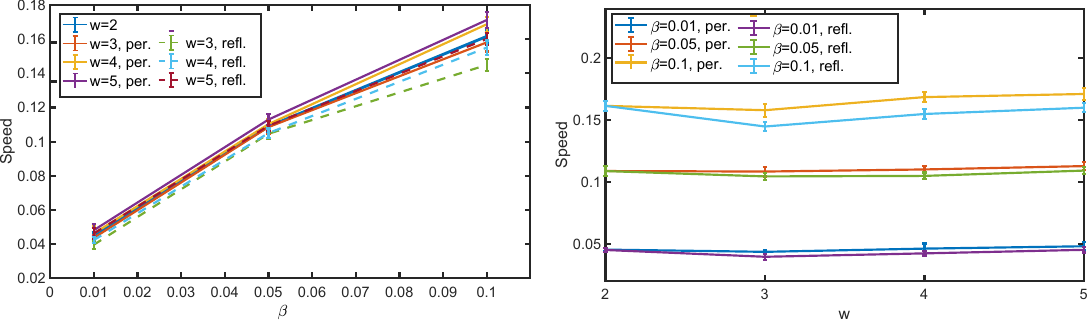}
\caption{Simulation comparison of the propagation speed on $\Z^2 \times \Z_w$, with the third dimension equipped with a periodic boundary condition, and on $\Z^2 \times \llbracket 0,w-1\rrbracket$, with the third dimension equipped with a reflecting boundary condition. In {\bf (a)}, we show the propagation speed as a function of $\beta$ for $w=2$ to $w=5$. In {\bf (b)}, we show the propagation speed as a function of $w$ for $\beta=0.01, 0.05, 0.1$. Error bars indicate 95\% confidence intervals.}
\label{fig:speed_beta}
\end{figure}

Note first that the two boundary conditions are equivalent for $w=2$ two layers. When $w>2$, equipping the model with a reflecting boundary condition along the third dimension will result in a smaller propagation speed than for the periodic case, due to the decreased ability of type-1 cells on the top and bottom layers to spread out. However, the difference is small, especially for smaller values of the fitness advantage $\beta$, which indicates that our modeling decision to equip the third layer with a periodic boundary condition is a reasonable approximation for small $\beta$ (recall that in \cite{bozic2010accumulation}, $\beta = 0.004$ is estimated to be a typical value). \\

\end{appendix}

\noindent {\bf Acknowledgments.}
The authors would like to thank the anonymous reviewer for a careful reading of the manuscript and for insightful comments and suggestions, which helped substantially improve the quality of the manuscript.
We also thank the Associate Editor and the Editor for their constructive comments. \\

\noindent {\bf Funding.}
EBG and KL were supported in part by NSF grant CMMI-1552764. EBG, JF and KS were supported in part by NSF grants DMS-1349724 and DMS-2052465. KL and JF were supported in part by the U.S.-Norway Fulbright Foundation and the Research Council of Norway R\&D Grant 309273. EBG was supported in part by the Norwegian Centennial Chair grant.

\bibliography{tubes}

\providecommand{\bysame}{\leavevmode\hbox to3em{\hrulefill}\thinspace}
\providecommand{\MR}{\relax\ifhmode\unskip\space\fi MR }
\providecommand{\MRhref}[2]{%
  \href{http://www.ams.org/mathscinet-getitem?mr=#1}{#2}
}
\providecommand{\href}[2]{#2}
\begin{thebibliography}{10}

\bibitem{armitage1954age}
Peter Armitage and Richard Doll, \emph{The age distribution of cancer and a
  multi-stage theory of carcinogenesis}, British journal of cancer \textbf{8}
  (1954), no.~1, 1--12.

\bibitem{armitage1957two}
\bysame, \emph{A two-stage theory of carcinogenesis in relation to the age
  distribution of human cancer}, British journal of cancer \textbf{11} (1957),
  no.~2, 161--169.

\bibitem{bozic2010accumulation}
Ivana Bozic, Tibor Antal, Hisashi Ohtsuki, Hannah Carter, Dewey Kim, Sining
  Chen, Rachel Karchin, Kenneth~W Kinzler, Bert Vogelstein, and Martin~A Nowak,
  \emph{Accumulation of driver and passenger mutations during tumor
  progression}, Proceedings of the National Academy of Sciences \textbf{107}
  (2010), no.~43, 18545--18550.

\bibitem{braak2003}
B.~Braakhuis, M.~Tabor, J.~Kummer, C.~Leemans, and R.~Brakenhoff, \emph{A
  genetic explanation of {S}laughter's concept of field cancerization evidence
  and clinical implications}, Cancer Research \textbf{63} (2003), no.~8,
  1727--1730.

\bibitem{BraGri80}
Maury Bramson and David Griffeath, \emph{On the {W}illiams-{B}jerknes tumour
  growth model. {II}}, Math. Proc. Cambridge Philos. Soc. \textbf{88} (1980),
  no.~2, 339--357. \MR{578279}

\bibitem{BraGri81}
\bysame, \emph{On the {W}illiams-{B}jerknes tumour growth model. {I}}, Ann.
  Probab. \textbf{9} (1981), no.~2, 173--185. \MR{606980}

\bibitem{Canonne2017}
C.~Cannone, \emph{A short note on {P}oisson tail bounds}, 2017,
  \url{http://www.cs.columbia.edu/~ccanonne/files/misc/2017-poissonconcentration.pdf}.

\bibitem{chai2009}
H.~Chai and R.~Brown, \emph{Field effect in cancer--an update}, Annals of
  Clinical \& Laboratory Science \textbf{39} (2009), no.~4, 331--337.

\bibitem{CHANDRASOMA201873}
Parakrama~T. Chandrasoma, \emph{Chapter 4 - histologic definition and diagnosis
  of epithelia in the esophagus and proximal stomach}, GERD (Parakrama~T.
  Chandrasoma, ed.), Academic Press, 2018, pp.~73--107.

\bibitem{Curtius2018}
K~Curtius, NA~Wright, and TA~Graham, \emph{An evolutionary perspective on field
  cancerization}, Nat Rev Cancer \textbf{18} (2018), no.~1, 19--32.

\bibitem{DurFooLed}
R.~Durrett, J.~Foo, and K.~Leder, \emph{Spatial {M}oran models, {II}: cancer
  initiation in spatially structured tissue}, J. Math. Biol. \textbf{72}
  (2016), no.~5, 1369--1400. \MR{3464206}

\bibitem{Dur84}
Richard Durrett, \emph{Oriented percolation in two dimensions}, Ann. Probab.
  \textbf{12} (1984), no.~4, 999--1040. \MR{757768}

\bibitem{durrett1988lecture}
\bysame, \emph{Lecture notes on particle systems and percolation}, The
  Wadsworth \& Brooks/Cole Statistics/Probability Series, Wadsworth \&
  Brooks/Cole Advanced Books \& Software, Pacific Grove, CA, 1988. \MR{940469}

\bibitem{DurMose2015}
Richard Durrett and Stephen Moseley, \emph{Spatial {M}oran models {I}.
  {S}tochastic tunneling in the neutral case}, Ann. Appl. Probab. \textbf{25}
  (2015), no.~1, 104--115. \MR{3297767}

\bibitem{durrett95}
Rick Durrett, \emph{Ten lectures on particle systems}, Lectures on probability
  theory ({S}aint-{F}lour, 1993), Lecture Notes in Math., vol. 1608, Springer,
  Berlin, 1995, pp.~97--201. \MR{1383122}

\bibitem{durrett2007}
Rick Durrett and Iljana Z\"{a}hle, \emph{On the width of hybrid zones},
  Stochastic Process. Appl. \textbf{117} (2007), no.~12, 1751--1763.
  \MR{2437727}

\bibitem{dvor51}
A.~Dvoretzky and P.~Erd\"{o}s, \emph{Some problems on random walk in space},
  Proceedings of the {S}econd {B}erkeley {S}ymposium on {M}athematical
  {S}tatistics and {P}robability, 1950, University of California Press,
  Berkeley and Los Angeles, 1951, pp.~353--367. \MR{0047272}

\bibitem{FLR2014}
Jasmine Foo, Kevin Leder, and Marc~D. Ryser, \emph{Multifocality and recurrence
  risk: a quantitative model of field cancerization}, J. Theoret. Biol.
  \textbf{355} (2014), 170--184. \MR{3217192}

\bibitem{foo2020mutation}
Jasmine Foo, Kevin Leder, and Jason Schweinsberg, \emph{Mutation timing in a
  spatial model of evolution}, Stochastic Process. Appl. \textbf{130} (2020),
  no.~10, 6388--6413. \MR{4140038}

\bibitem{foo2014escape}
Jasmine Foo, Kevin Leder, and Junfeng Zhu, \emph{Escape times for branching
  processes with random mutational fitness effects}, Stochastic Process. Appl.
  \textbf{124} (2014), no.~11, 3661--3697. \MR{3249351}

\bibitem{Geboes94}
K.~Geboes, \emph{Squamous mucosa and reflux},
  \url{http://www.hon.ch/OESO/books/Vol_3_Eso_Mucosa/Articles/ART004.HTML},
  1994.

\bibitem{NIH}
NIH National~Cancer Institute, \emph{{SEER} training cancer classification},
  \url{https://training.seer.cancer.gov/disease/categories/classification.html}.

\bibitem{knudson2001}
A.~Knudson, \emph{Two genetic hits (more or less) to cancer}, Nature Reviews
  Cancer \textbf{1} (2001), no.~2, 157--161.

\bibitem{knudson1971mutation}
Alfred~G Knudson, \emph{Mutation and cancer: statistical study of
  retinoblastoma}, Proceedings of the National Academy of Sciences \textbf{68}
  (1971), no.~4, 820--823.

\bibitem{kom2006}
Natalia~L. Komarova, \emph{Spatial stochastic models for cancer initiation and
  progression}, Bull. Math. Biol. \textbf{68} (2006), no.~7, 1573--1599.
  \MR{2257717}

\bibitem{lawler2010}
Gregory~F. Lawler and Vlada Limic, \emph{Random walk: a modern introduction},
  Cambridge Studies in Advanced Mathematics, vol. 123, Cambridge University
  Press, Cambridge, 2010. \MR{2677157}

\bibitem{levin2017markov}
David~A. Levin, Yuval Peres, and Elizabeth~L. Wilmer, \emph{Markov chains and
  mixing times}, American Mathematical Society, Providence, RI, 2009, With a
  chapter by James G. Propp and David B. Wilson. \MR{2466937}

\bibitem{liggett2012interacting}
Thomas~M. Liggett, \emph{Interacting particle systems}, Classics in
  Mathematics, Springer-Verlag, Berlin, 2005, Reprint of the 1985 original.
  \MR{2108619}

\bibitem{nowak2006}
M.~A. Nowak, \emph{Evolutionary dynamics: Exploring the equations of life},
  Belknap Press, Cambridge, MA, 2006.

\bibitem{Richardson73}
Daniel Richardson, \emph{Random growth in a tessellation}, Proc. Cambridge
  Philos. Soc. \textbf{74} (1973), 515--528. \MR{329079}

\bibitem{Williams72}
T.~Williams and R.~Bjerknes, \emph{Stochastic model for abnormal clone spread
  through epithelial basal layer}, Nature \textbf{236} (1972), 19--21.

\bibitem{wodarz2014}
Dominik Wodarz and Natalia~L. Komarova, \emph{Dynamics of cancer: Mathematical
  foundations of oncology}, World Scientific Publishing Co. Pte. Ltd.,
  Hackensack, NJ, 2014. \MR{3309233}

\end{thebibliography}
\bibliographystyle{amsplain}

\end{document}